\theoremstyle{plain}
\newtheorem{thm}{Theorem}
\newtheorem{prop}[thm]{Proposition}
\newtheorem{lem}[thm]{Lemma}
\newtheorem{cor}[thm]{Corollary}
\newtheorem*{claim}{Claim}
\theoremstyle{definition}
\newtheorem{problem}[thm]{Problem}
\newtheorem{exm}[thm]{Example}
\newtheorem{rem}[thm]{Remark}
\newtheorem{df}[thm]{Definition}
\newtheorem{conj}[thm]{Conjecture}
\newcommand{\jsd}{join-sem\-i\-dis\-trib\-u\-tive}
\renewcommand{\leq}{\leqslant}
\newcommand{\<}{\langle}
\renewcommand{\>}{\rangle}
\newcommand{\rto}{\rightarrow}
\newcommand{\Ji}{\operatorname{Ji}}
\newcommand{\op}{\operatorname}
\DeclareMathOperator{\Id}{Id}
\begin{document}

\title{On implicational bases of closure systems with unique critical sets}
\author {K. Adaricheva}
\address{Department of Mathematical Sciences, Yeshiva University,
245 Lexington ave.,
New York, NY 10016, USA}
\email{adariche@yu.edu}

\author {J. B. Nation}
\address{Department of Mathematics, University of Hawaii, Honolulu, HI
96822, USA}
\email{jb@math.hawaii.edu}

\thanks{The first author was partially supported by AWM-NSF Mentor Travel grant N0839954.}
\keywords{Closure systems, lattices of closed sets, canonical basis, stem basis, Duquenne-Guigues basis, unit basis, optimum basis, minimum basis, acyclic Horn formulas, shortest DNF-representation, shortest CNF-representation, shortest representations of acyclic hypergraphs, finite semidistributive lattices, lattices without $D$-cycles}
\subjclass[2010]{05A05, 06B99, 52B05}

\begin{abstract}
We show that every optimum basis of a finite closure system, in D. Maier's sense, is also right-side optimum, which is a parameter of a minimum CNF representation of a Horn Boolean function. New parameters for the size of the binary part are also established.
We introduce the $K$-basis of a general closure system, which is a refinement of the canonical basis of V.~Duquenne and J.L.~Guigues, and discuss a polynomial algorithm to obtain it. We study closure systems with unique critical sets, and some subclasses of these where the $K$-basis is unique. A further refinement in the form of the $E$-basis is possible for closure systems without $D$-cycles. There is a polynomial algorithm to recognize the $D$-relation from a $K$-basis.  Thus, closure systems without $D$-cycles can be effectively recognized. While the $E$-basis achieves an optimum in one of its parts, the optimization of the others is an NP-complete problem.
\end{abstract}

\maketitle

\section{Introduction}

Closure system on a finite set is a unifying concept in logic programming, 
relational data bases and knowledge systems. Closure systems can be defined by a set of implications (a basis), and in such form they appear as Horn formulas in logic programming, dependencies in relational data bases, CNF representations of Horn Boolean functions and directed hypergraphs in discrete optimization.

Closure systems can also be presented in 
the terms of finite lattices, and the tools of economic description 
of a finite lattice have long existed in lattice theory. In this paper we continue the study of economic representation of a closure system based on the structure of its closure lattice, initiated in K.~Adaricheva, J.B.~Nation and R.~Rand \cite{ANR11}.

Since the seminal work of D. Maier \cite{Mai}, the main parameters of effective representation of a closure system have been (1) the number of implications in a basis, or (2) the total number of literals in all implications of the basis. It was shown by Maier, in a non-trivial argument, that the set of implications that achieves the minimum in the second parameter (an optimum basis) also achieves the minimum in the first (a minimum basis).
The result of V.~Duquenne and J.L.~Guigues  \cite{DG} is that every closure system has a canonical minimum basis; moreover, it can be obtained from any given basis in time polynomial in the size of that basis, see A. Day \cite{D92}. Quite to the opposite, Maier showed that the problem of finding an optimum basis is NP-complete \cite{Mai}.

On the other hand, in the theory of Horn Boolean functions and directed hyper-graphs, other parameters were developed, and no connection with Maier's parameters was so far realized. We establish such a connection in Theorem \ref{main}, for the minimum representation of a Horn Boolean function.

Then we introduce a $K$-basis in general closure systems following the idea of the minimal join representation of elements in finite lattices. This produces a refinement of the canonical basis of V. Duquenne and J.L. Guigues \cite{DG}.
While the $K$-basis is not optimum, it does provide a reduction in size of the canonical basis and can be obtained in polynomial time from the canonical basis.

The $K$-basis allows us to establish an important link between the canonical basis and the $D$-relation in the closure lattice of a closure system: the latter plays an important role in the lattice theoretical literature, see R.~Freese, J.~Je\v{z}ek and J.B.~Nation \cite{FJN}. In particular, the $D$-relation can be effectively recovered from the canonical basis via its refinement to any $K$-basis.
This allows us to recognize closure systems without $D$-cycles (Theorem \ref{mainD}), which are generalizations of the quasi-acyclic systems defined in P.~Hammer and A.~Kogan\cite{HK}.

We also suggest the general concept of \emph{partial optimizations} based on the idea that any basis can always be divided into two natural parts: so-called \emph{binary} and \emph{non-binary}. Either of those parts can be optimized given various assumptions. Alternatively, a basis can be minimized with respect to the total size of all the \emph{premises} of implications, which we call \emph{left-side optimum}, or with respect to the total size of all the \emph{conclusions}, which is called \emph{right-side optimum}. 

The essential part of the paper is devoted to the study of effective representations of closure systems with unique critical sets, or $UC$-\emph{systems}. The definition of this class is based on the notion of \emph{essential} and \emph{critical} sets associated with a given closure system. In one important subclass of such systems, the $K$-basis is unique. 

Further refinement in the form of the $E$-basis is possible in systems without $D$-cycles, which form a proper subclass of $UC$-systems. This basis is  right-side optimized in its non-binary part. Still, we show that finding an optimum basis for such systems is an NP-complete problem.

The paper is organized as follows. We collect all the required definitions and recall important results in section \ref{prem}. Then section \ref{min-opt} establishes the relationship between optimum and right-side optimum bases (Theorem \ref{main}). Section \ref{bin}  deals with the problem of optimization of the binary part. It turns out, such optimization is independent of the form and size of the non-binary part (Theorem \ref{rs-bin}). We also introduce and discuss the concept of a \emph{regular} basis (Definition \ref{regD}) that becomes essential in section \ref{D-rel}.
In section \ref{kbas} we introduce the $K$-basis (Definition \ref{Kbs}) and describe a polynomial algorithm for retrieving it from the canonical basis (Proposition \ref{min gen}). This will be used in section \ref{Kbas SD}, which discusses the $K$-basis in systems with unique critical sets. In section \ref{D-rel}, a relationship between a canonical basis and the $D$-relation on the closure lattice is established (Theorem \ref{tr}), which allows us to investigate, in sections \ref{Ebas} and \ref{optE}, closure systems without $D$-cycles. The latter form a proper subclass of the class of $UC$-systems (Definition \ref{UCsys}). In particular, we prove that the $E$-basis that was introduced in \cite{ANR11} for closure systems without $D$-cycles, is optimized in one of three essential parts of the basis (Theorem \ref{rs-min}). Nevertheless, two other parts of this basis cannot be effectively optimized, which is shown in section \ref{NP} (Corollaries \ref{NP1} and \ref{sRNP}). 

\section{Preliminaries}\label{prem}

Given a non-empty set $S$ and the set $\mathbf{2}^S$ of all its subsets, a \emph{closure operator} is a map $\phi: \mathbf{2}^S \rightarrow \mathbf{2}^S$ that satisfies the following, for all $X,Y \in \mathbf{2}^S$: 
\begin{itemize}
\item[(1)] increasing: $X \subseteq \phi(X)$;
\item[(2)] isotone: $X \subseteq Y$ implies $\phi(X)\subseteq \phi(Y)$;
\item[(3)] idempotent: $\phi(\phi(X))=\phi(X)$.
\end{itemize}
It will be convenient for us to refer to the pair $\langle S,\phi\rangle$ of a set $S$ and a closure operator on it as a \emph{closure system}.

A subset $X \subseteq S$ is called \emph{closed} if $\phi(X)=X$. The collection of closed subsets of closure operator $\phi$ on $S$ forms a lattice, which is usually called the \emph{closure lattice} of the closure system and denoted $\op{Cl}(S,\phi)$. 

The lattice operations are denoted $\wedge$, for the \emph{meet}, and $\vee$, for the \emph{join}. 
Simultaneously, every lattice is a partially ordered set in which every two elements have a least upper bound (which coincides with the join of those elements), and a greatest lower bound (the meet).
We will use the notation $0$ for the least element of a lattice, and $1$ for its greatest element.
If $a \leq b$ in lattice $L$, then we denote by $[a,b]$ the interval in $L$, consisting of all $c$ satisfying $a \leq c\leq b$.

For every finite lattice $L$, let $\Ji(L)$ denote the set of \emph{join irreducible} elements of $L$. An element $j \in L$ is called join irreducible, if $j \not = 0$, and $j=a \vee b$ implies $a=j$ or $b=j$.

With every finite lattice $L$, we can associate a particular closure system $\langle S,\phi\rangle$ in such a way that $L$ is isomorphic to $\op{Cl}(S,\phi)$. Indeed, define a closure system with $S=\Ji (L)$ and the following closure operator:
\[ \phi(X)=[0,\bigvee X]\cap J(L), X\subseteq S.
\]
It is straightforward to check that the closure lattice of $\phi$ is isomorphic to $L$.

There are infinitely many closure systems whose closure lattices are isomorphic to a given lattice $L$. On the other hand, the closure system just described is the unique one, up to one-to-one mappings of the base sets, that satisfies two additional properties:
\begin{itemize}
\item[(1)] $\phi(\emptyset)=\emptyset$;
\item[(2)] $\phi(\{i\})\setminus \{i\}$ is closed, for every $i \in S$.
\end{itemize}
Condition (2) just says that each $\phi(\{i\})$ is join irreducible.
Note that (1) is a special case of (2), and that (2) implies the property
\begin{itemize}
\item[(3)] $\phi(\{i\})=\phi(\{j\})$ implies $i=j$, for any $i,j \in S$.
\end{itemize}
We will call a closure system with properties (1), (2) above a \emph{standard closure system}.
It is straightforward to verify that the standard system is characterized by the property that the set $S$ is of the smallest possible size. In other words, one cannot reduce $S$ to define an equivalent closure system.
There exists a standard procedure to obtain a standard closure system equivalent to a given one, see \cite{ANR11}.
We will assume that the closure systems in this paper are standard.

If $y \in \phi(X)$, then this relation between an element $y \in S$ and a subset $X \subseteq S$ in a closure system can be written in the form of an implication: $X \rightarrow y$. Thus, the closure system $\langle S,\phi\rangle$ can be given by the set of implications:
\[ 
\Sigma_\phi = \{X \rightarrow y: y \in S, X \subseteq S \text{ and } y \in \phi(X)\}.
\]
Conversely, any set of implications $\Sigma$ defines a closure system: the closed sets are exactly those subsets
$Y\subseteq S$ that \emph{respect} the implications from $\Sigma$, i.e., if $X \rightarrow x$ is in $\Sigma$, and $X \subseteq Y$, then $x \in Y$. There are numerous ways to represent the same closure system by sets of implications; those sets of implications with some minimality property are called \emph{bases}.  Thus we can speak of various sorts of bases.

It is convenient to define an implication $X\rto y$ as any ordered pair $(X,y)$, where $X \subseteq S$ and $y \in S$, especially having in mind its interpretation as a propositional formula, as in the next paragraph below. On the other hand, from the point of view of closure systems, any single implication $ X\rto x$, with $x \in X$, defines a trivial closure system, where all subsets of $S$ are closed. If such an implication is present in the set of implications $\Sigma$, then it can be removed without any change to the family of closed sets that $\Sigma$ defines. We will assume throughout the paper that implications $X\rto x$, where $x \in X$, are not included in the set of implications defining closure systems.

In general, implications $X\rto y$, where $X \subseteq S$ and $y \in S$, can be treated as the formulas of propositional logic over the set of variables $S$, equivalent to $y \vee \bigvee_{x \in X} \neg x $.  Formulas of this form are also called \emph{definite Horn clauses}. More generally, Horn clauses are disjunctions of several negative literals and at most one positive literal. The presence of a positive literal makes a Horn clause \emph{definite}.  A \emph{Horn formula} is a conjunction of Horn clauses.

There is also a direct correspondence between Horn formulas and Horn Boolean functions: a Boolean function $f:\{0,1\}^n \rto \{0,1\}$ is called a (\emph{pure} or \emph{definite}) \emph{Horn function}, if it has some CNF representation given by a (definite) Horn formula $\Sigma$.

Note that, in general, one can consider implications of the form $X \rto Y$, where $Y$ is not necessarily a one-element subset of $S$. The set $X$ is called the \emph{premise}, and $Y$ the \emph{conclusion} of an implication $X\rto Y$.  We will assume that any implication $X\rto Y$ is an ordered pair of non-empty subsets $X,Y \subseteq S$, and $Y\cap X = \emptyset$.

The following general observation about the computation of the closure via the basis is part of the folklore, see, for example, \cite{W94}.

\begin{prop}\label{inference}
Let  $\langle S,\phi\rangle$ be a closure system, with a basis $\Sigma$.  An implication $A\rto b$ holds in the closure system iff one can find  a sequence $\sigma_k = A_k\rto B_k$, $k\leq m$, of implications from $\Sigma$, such that: \emph{(I)}  $A_1 \subseteq A$; \emph{(II)} $b \in B_n$; \emph{(III)} $A_k \subseteq A \cup B_1 \cup \dots \cup B_{k-1}$ for $k>1$. 
\end{prop}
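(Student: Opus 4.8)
The plan is to prove both directions by establishing the right equivalence between the "derivation sequence" condition and membership in the closure $\phi(A)$. For the easy direction, suppose a sequence $\sigma_k = A_k \rto B_k$, $k \leq m$, with properties (I)--(III) exists; I would show by induction on $k$ that $B_1 \cup \dots \cup B_k \subseteq \phi(A)$. For the base case, (I) gives $A_1 \subseteq A \subseteq \phi(A)$, so since $A_1 \rto B_1$ is an implication holding in the system, $B_1 \subseteq \phi(A_1) \subseteq \phi(\phi(A)) = \phi(A)$, using isotonicity and idempotence of $\phi$. For the inductive step, (III) gives $A_k \subseteq A \cup B_1 \cup \dots \cup B_{k-1} \subseteq \phi(A)$ by the induction hypothesis (together with $A \subseteq \phi(A)$), and then again $B_k \subseteq \phi(A_k) \subseteq \phi(A)$. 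By (II), $b \in B_m \subseteq \phi(A)$, so $A \rto b$ holds.

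For the converse, the key idea is to exhibit a concrete basis-driven closure computation and observe that it is precisely a derivation sequence of the required form. Given $A$, define the standard iterative closure: set $A^{(0)} = A$, and $A^{(i+1)} = A^{(i)} \cup \bigcup \{ B : (X \rto B) \in \Sigma, X \subseteq A^{(i)} \}$. Since $S$ is finite, this stabilizes at some $A^{(N)}$, and the standard folklore fact (which I would either cite or quickly verify) is that $A^{(N)} = \phi(A)$: the set $A^{(N)}$ respects every implication of $\Sigma$ and contains $A$, hence contains $\phi(A)$; conversely every $A^{(i)} \subseteq \phi(A)$ by the same induction as above, so $A^{(N)} \subseteq \phi(A)$. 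Now if $b \in \phi(A) = A^{(N)}$ and $b \notin A$, then $b$ first appears at some stage $i+1$, witnessed by an implication $X \rto B$ with $X \subseteq A^{(i)}$ and $b \in B$. Enumerating, for each stage $j$ from $1$ up to $i+1$, all the implications $X \rto B \in \Sigma$ with $X \subseteq A^{(j-1)}$ that were used to build $A^{(j)}$, and listing them stage by stage, produces a sequence $\sigma_1, \dots, \sigma_m$ with the last one being the chosen $X \rto B$ containing $b$. One checks directly: (I) holds because every premise used at stage $1$ is contained in $A^{(0)} = A$; (III) holds because a premise used at stage $j$ lies in $A^{(j-1)}$, which is $A$ together with the union of the conclusions of all implications listed at earlier stages; and (II) holds by the choice of the final implication. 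The degenerate case $b \in A$ is excluded by the standing convention that implications $X \rto x$ with $x \in X$ are not in $\Sigma$, but even if one allows $b\in A$ trivially, one may take a one-term sequence or note the statement is about a nontrivial $A\rto b$; I would remark on this briefly.

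The main obstacle, such as it is, is bookkeeping rather than mathematics: one must be careful that the sequence extracted from the stage-by-stage closure computation genuinely satisfies the "cumulative" condition (III), i.e. that when an implication is invoked, its premise has already been "generated" by the conclusions of strictly earlier implications in the list (plus the original set $A$). This is ensured by respecting the stage order and, within a stage, by the fact that the premise of any implication applied at stage $j$ is a subset of $A^{(j-1)}$, which by construction equals $A$ union the conclusions of all implications that appear in the list before stage $j$. A minor point to handle cleanly is that several implications may be needed at a single stage and that an element may be "produced" by more than one implication; listing all of them at that stage (in any order) causes no problem for (III), since all their premises are already contained in $A^{(j-1)}$. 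Finally, I would note that the argument also shows $m$ can be bounded in terms of $|\Sigma|$ and $|S|$, which is implicit in the "folklore" status of the result, though the proposition as stated does not require such a bound.
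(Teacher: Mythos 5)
Your proof is correct. The paper offers no proof of this proposition at all --- it is stated as folklore with a pointer to Wild \cite{W94} --- and your argument (the easy induction showing each $B_k\subseteq\phi(A)$ for soundness, and the stage-by-stage forward-chaining computation of $\phi(A)$ for completeness, with the stage order guaranteeing condition (III)) is precisely the standard one. Your handling of the edge case $b\in A$ and your silent correction of the statement's typo ($b\in B_n$ should read $b\in B_m$) are both appropriate.
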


A sequence $\sigma_k$, $k \leq m$, with properties (I)-(III) from Proposition~\ref{inference} is called a $\Sigma$-\emph{inference} of $b$ from $A$. Thus, an implication $A\rto b$ holds in the closure system iff $b$ has $\Sigma$-inference from $A$, for some basis $\Sigma$.  We could also say that the implication $A\rto b$ \emph{follows} from $\Sigma$.

Following K.~Bertet and B.~Monjardet \cite{BM}, we will call the basis $\Sigma$ a \emph{unit implicational basis}, if $|Y|=1$ for all implications $X \rto Y$ in $\Sigma$. 

Given a unit basis $\Sigma$, we can replace all implications $X \rto y$ with the same premise $X$ by a single implication $X \rto Y$, where $Y$ is the union of all singletons $y$ from the conclusions of these unit implications. Such a basis will be called the \emph{aggregation} of $\Sigma$, and denoted by $\Sigma^{ag}$.

Vice versa, for every basis $\Sigma$, we may consider its \emph{unit expansion} $\Sigma_u$, where $X_i\rto Y_i$ is replaced by $\{X_i\rto y:y\in Y_i\}$.  In particular, $(\Sigma_u)^{ag}=\Sigma$, for every aggregated basis $\Sigma$.

As in \cite{ANR11}, we will call the subset $\Sigma^b=\{(A\rto B)\in \Sigma : |A|=1\}$ 
of given basis $\Sigma$ the \emph{binary part} of the basis. The \emph{non-binary} part of $\Sigma$ is $\Sigma^{nb}=\Sigma\setminus \Sigma^b$, consisting of all implications $A \rto B$ in $\Sigma$ with $|A|>1$.

Assuming that the closure system $\< S, \phi\>$ defined by $\Sigma$ is standard, we can claim that the binary relation $\geq_\phi$ on $X$ defined as:
\[ a\geq_\phi b \text{  iff  } b \in \phi(a)
\]
is a partial order. This is exactly the partial order on the join irreducible elements in $L=\op{Cl}(S,\phi)$. 

We write $|\Sigma|$ for the number of implications in $\Sigma$. 
An aggregated basis $\Sigma$ is called \emph{minimum}, if $|\Sigma|\leq |\Sigma^*|$, for any other aggregated basis $\Sigma^*$ of the same system.

The number $s(\Sigma)=|X_1|+\dots |X_n|+|Y_1|+\dots +|Y_n|$ is called the \emph{size} of the basis $\Sigma$. 
A basis $\Sigma$ is called \emph{optimum} if $s(\Sigma)\leq s(\Sigma^*)$, for any other basis $\Sigma^*$ of the system. Similarly, one can define $s_L(\Sigma)=|X_1|+\dots |X_n|$, the $L$-size, and $s_R(\Sigma)= |Y_1|+\dots +|Y_n|$, the $R$-size, of a basis $\Sigma$. The basis will be called \emph{left-optimum} (resp.~\emph{right-optimum}), if 
$s_L(\Sigma)\leq s_L(\Sigma^*)$ (resp.~$s_R(\Sigma)\leq s_R(\Sigma^*)$), for any other basis $\Sigma^*$.
Finally, the basis is called \emph{non-redundant}, if removing any implication gives a set of implications that no longer defines the same closure system. 

Now we recall the major theorem of  V. Duquenne and J.L. Guigues \cite{DG} about the canonical basis; see also N.~Caspard and B.~Monjardet \cite{CM03}. 

A set $Q\subseteq S$ is called \emph{quasi-closed} for $\<S,\phi\>$, if 
\begin{itemize}
\item[(1)] $Q$ is not closed;
\item[(2)] $Q\cap X$ is closed, for every closed set $X$ with $Q \not\subseteq X$.
\end{itemize}
In other words, adding $Q$ to the family of $\phi$-closed sets, makes the new family stable under the set intersection; in particular, it is the family of closed sets of some closure operator.

A quasi-closed set $C$ is called \emph{critical}, if it is minimal, with respect to the containment order, among all quasi-closed sets with the same closure. 
Equivalently, if $Q\subseteq C$ is another quasi-closed set and $\phi(Q)=\phi(C)$, then $Q=C$.

Let $\mathcal{Q}$ be the set of all quasi-closed sets and $\mathcal{C}\subseteq \mathcal{Q}$ be the set of critical  sets of the closure system $\<S,\phi\>$. 
Subsets of the form $\phi(C)$, where $C \in \mathcal{C}$, are called \emph{essential}.
It can be shown that by adding all quasi-closed sets to closed sets of $\<S,\phi\>$, one obtains a family of subsets stable under set intersection, thus defining a new closure operator $\sigma$.  This closure operator $\sigma$ associated with $\phi$ is called the \emph{saturation} operator. In other words, for every $Y\subseteq S$, $\sigma(Y)$ is the smallest set containing $Y$ which is either quasi-closed or closed.

\begin{thm}\label{DG} \cite{DG}
Consider the set of implications $\Sigma_C=\{C\rto ( \phi(C)\setminus C): C \in \mathcal{C}\}$. Then
\begin{itemize}
\item[(1)]  $\Sigma_C$ is a minimum basis.
\item[(2)] For every other basis $\Sigma$, for every $C\in \mathcal{C}$, there exists $(U\rto V)$ in $\Sigma$ such that $\sigma(U)=C$.
\end{itemize}
\end{thm}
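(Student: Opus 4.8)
The plan is to prove first that $\Sigma_C$ is a basis for $\<S,\phi\>$, then to prove clause (2), and finally to deduce the minimality in clause (1) from (2) by a counting argument. Two elementary facts are used throughout. First, every quasi-closed set $Q$ contains a critical set $C$ with $\phi(C)=\phi(Q)$: by finiteness pick $C$ minimal under inclusion among the quasi-closed subsets of $Q$ whose closure is $\phi(Q)$ (a nonempty family, since it contains $Q$); such a $C$ is critical, because a quasi-closed proper subset of $C$ with closure $\phi(Q)$ would again lie in $Q$. Second, if $U\subseteq C$ with $C$ quasi-closed, then $\sigma(U)\subseteq C$, since $C$ is one of the $\sigma$-closed sets (quasi-closed or closed) containing $U$ and $\sigma(U)$ is the least such.

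To show $\Sigma_C$ is a basis I check that $Y\subseteq S$ respects $\Sigma_C$ iff $Y$ is $\phi$-closed. One direction is immediate: if $Y$ is closed and $C\subseteq Y$ for a critical $C$, then $\phi(C)\setminus C\subseteq\phi(Y)=Y$. For the converse I argue by a minimal counterexample: let $Y$ be of least cardinality among sets that respect $\Sigma_C$ but are not closed. I claim $Y$ is quasi-closed. For any closed $X$ with $Y\not\subseteq X$, the set $Y\cap X$ still respects $\Sigma_C$ --- if $C\subseteq Y\cap X$ is critical, then $\phi(C)\setminus C\subseteq Y$ because $Y$ respects $\Sigma_C$, and $\phi(C)\setminus C\subseteq X$ because $X$ is closed and $C\subseteq X$ --- and $|Y\cap X|<|Y|$, so $Y\cap X$ is closed by minimality. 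Hence $Y$ is quasi-closed, so by the first fact there is a critical $C\subseteq Y$ with $\phi(C)=\phi(Y)$; then $\phi(C)\setminus C\subseteq Y$, so $\phi(Y)=\phi(C)\subseteq Y$, contradicting that $Y$ is not closed.

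For clause (2), fix a basis $\Sigma$ and a critical $C$. As $C$ is not closed and $\Sigma$ is a basis, $C$ violates some implication of $\Sigma$. Suppose toward a contradiction that no $(U\rto V)\in\Sigma$ has $\sigma(U)=C$; I will show that $C$ then respects every $(U\rto V)\in\Sigma$ with $U\subseteq C$, which is absurd. Fix such $(U\rto V)$: then $V\subseteq\phi(U)$ since $\Sigma$ is a basis, and $\sigma(U)\subseteq C$ by the second fact, with $\sigma(U)\ne C$ by assumption, so $\sigma(U)\subsetneq C$. If $\sigma(U)$ is closed, then $V\subseteq\phi(U)\subseteq\phi(\sigma(U))=\sigma(U)\subseteq C$. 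If $\sigma(U)$ is quasi-closed, first note $C\not\subseteq\phi(\sigma(U))$: otherwise $\phi(C)\subseteq\phi(\sigma(U))\subseteq\phi(C)$ forces $\phi(\sigma(U))=\phi(C)$, and then $\sigma(U)\subsetneq C$ contradicts criticality of $C$. Applying the defining intersection property of the quasi-closed set $C$ to the closed set $\phi(\sigma(U))$, we get that $C\cap\phi(\sigma(U))$ is closed; it contains $\sigma(U)$, so $\phi(\sigma(U))\subseteq C\cap\phi(\sigma(U))\subseteq C$, and thus again $V\subseteq\phi(U)\subseteq\phi(\sigma(U))\subseteq C$. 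Either way $C$ respects $\Sigma$, the desired contradiction.

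Minimality of $\Sigma_C$ then follows by counting: for any basis $\Sigma$, clause (2) supplies for each $C\in\C$ an implication $(U_C\rto V_C)\in\Sigma$ with $\sigma(U_C)=C$, and the map $C\mapsto(U_C\rto V_C)$ is injective --- if $(U_C\rto V_C)=(U_{C'}\rto V_{C'})$ then $U_C=U_{C'}$, hence $C=\sigma(U_C)=\sigma(U_{C'})=C'$ --- so $|\Sigma|\ge|\C|=|\Sigma_C|$, and $\Sigma_C$ is aggregated. The step I expect to be the crux is the quasi-closed case of clause (2): extracting $\phi(\sigma(U))\subseteq C$ requires invoking the intersection property of $C$ against exactly the closed set $\phi(\sigma(U))$ and checking $C\not\subseteq\phi(\sigma(U))$, and it is here that minimality of the critical set $C$ among quasi-closed sets with a given closure is genuinely needed.
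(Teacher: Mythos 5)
The paper states this theorem as a cited result of Duquenne--Guigues and gives no proof of its own, so there is nothing internal to compare against; your argument stands or falls on its own, and it stands. The two preliminary facts are correctly justified (criticality only forbids a quasi-closed \emph{proper subset} with the same closure, so minimality within $Q$ suffices; and $\sigma(U)\subseteq C$ because $C$ itself is $\sigma$-closed), the minimal-counterexample argument that $\Sigma_C$ is a basis is sound, and the case analysis in clause (2) correctly isolates the one delicate point --- that $C\not\subseteq\phi(\sigma(U))$, forced by criticality of $C$, licenses the intersection property of $C$ against the closed set $\phi(\sigma(U))$ and yields $\phi(\sigma(U))\subseteq C$. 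The injectivity count then gives $|\Sigma|\geq|\mathcal{C}|=|\Sigma_C|$ for any basis $\Sigma$, which is minimality. This is a correct, self-contained proof consistent with the definitions of quasi-closed, critical, and saturation used in the paper.
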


The basis $\Sigma_C$ defined in Theorem~\ref{DG} is called the \emph{canonical basis} for the system $\langle S,\phi \rangle$.

We will make use of the following lemma regarding the saturation operator, due to M.~Wild. 

\begin{lem} \label{2-3-extended} \cite{W94}
Let $\Sigma$ be a basis for the closure system $\langle S,\phi \rangle$, and let $U \rto V$ be an implication in $\Sigma$.  Let $\Sigma' = \{ (X \rto Y) \in \Sigma : \phi(X) = \phi(U) \}$.
For any subset $W \subseteq S$ such that $\phi(W) \subseteq \phi(U)$, the implication $W\rto \sigma(W)$ follows from $\Sigma \setminus \Sigma'$.
\end{lem}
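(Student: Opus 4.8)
Let $\psi$ denote the closure operator on $S$ generated by the smaller set of implications $\Sigma\setminus\Sigma'$; by Proposition~\ref{inference}, the assertion that ``$W\rto\sigma(W)$ follows from $\Sigma\setminus\Sigma'$'' means precisely that $\sigma(W)\subseteq\psi(W)$. The plan is to prove the single statement that $\psi(W)$ is itself either closed or quasi-closed for $\langle S,\phi\rangle$. Since $W\subseteq\psi(W)$ and, as recalled above, the closed sets together with the quasi-closed sets form a family stable under intersection whose least member containing a given set $Y$ is $\sigma(Y)$, this at once yields $\sigma(W)\subseteq\psi(W)$.

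First I would record the inclusions $W\subseteq\psi(W)\subseteq\phi(W)\subseteq\phi(U)$: the middle one holds because $\phi(W)$ is closed, hence respects $\Sigma$ and in particular $\Sigma\setminus\Sigma'$, and contains $W$; the last is the hypothesis. If $\psi(W)$ is closed there is nothing to prove, so assume it is not, and fix a closed set $X$ with $\psi(W)\not\subseteq X$. I must show $\psi(W)\cap X$ is closed, i.e.\ respects every implication of $\Sigma=(\Sigma\setminus\Sigma')\cup\Sigma'$. For an implication $A\rto B$ in $\Sigma\setminus\Sigma'$ this is automatic, since both $\psi(W)$ and $X$ respect it and an intersection of two sets each respecting a family of implications again respects that family. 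For an implication $A\rto B$ in $\Sigma'$ it holds \emph{vacuously}: were $A\subseteq\psi(W)\cap X$, then from $A\subseteq X$ and $X$ closed we would get $\phi(A)\subseteq X$, while $\phi(A)=\phi(U)\supseteq\phi(W)\supseteq\psi(W)$ because $\phi(W)\subseteq\phi(U)$; hence $\psi(W)\subseteq X$, contradicting the choice of $X$. So no premise of an implication of $\Sigma'$ is contained in $\psi(W)\cap X$, and therefore $\psi(W)\cap X$ respects all of $\Sigma$, i.e.\ is closed. Thus $\psi(W)$ is quasi-closed, which completes the argument.

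I do not anticipate a genuine obstacle: the only use of the hypothesis $\phi(W)\subseteq\phi(U)$ is the inclusion $\phi(W)\subseteq\phi(A)$ in the displayed chain, and everything else is bookkeeping about which sets respect which implications. The one point worth stating explicitly is that $\Sigma\setminus\Sigma'$ need not be a basis of $\langle S,\phi\rangle$ at all; but this is harmless, since ``follows from $\Sigma\setminus\Sigma'$'' is a statement about the closure operator $\psi$ that it generates, and every step above is valid for $\psi$ regardless of whether $\psi$ coincides with $\phi$.
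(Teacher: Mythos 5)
The paper offers no proof of this lemma---it is quoted from Wild \cite{W94}---so there is nothing internal to compare against; judged on its own, your argument is correct and complete. Your route (show that the $(\Sigma\setminus\Sigma')$-closure $\psi(W)$ is either closed or quasi-closed, then invoke the characterization of $\sigma(W)$ as the least closed-or-quasi-closed set containing $W$) is exactly the natural one given the paper's definition of the saturation operator, and every step checks: the chain $W\subseteq\psi(W)\subseteq\phi(W)\subseteq\phi(U)$, the closure of ``respects an implication'' under intersection for $\Sigma\setminus\Sigma'$, and the vacuity of the $\Sigma'$ case via $\phi(A)=\phi(U)\supseteq\psi(W)$. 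Your closing remark that $\Sigma\setminus\Sigma'$ need not be a basis, and that ``follows from'' is just a statement about the operator $\psi$ it generates, is the right way to read the lemma and is worth keeping explicit.
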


A big part of the current paper is devoted to the closure systems that we call \emph{$UC$-systems}.  In such a system every essential element $X$ has exactly one critical set $C\subseteq S$ with $\phi(C)=X$. The source of inspiration for $UC$-systems is its proper subclass of closure systems whose closure lattices satisfy the join-semidistributive law.

A lattice is called \emph{join-semidistributive} if it satisfies the lattice law 
\[
(SD_\vee) \qquad
x\vee y=x\vee z \rto x\vee y= x\vee (y\wedge z).
\]
The join-semidistributive law plays an important role in lattice theory, for example in the study of free lattices, see \cite{FJN}.

An important subclass of finite \jsd\ lattices are so-called lattices \emph{without} $D$-\emph{cycles}. First, we need to define the $D$-relation on $\Ji (L)$. If $x\leq \bigvee_{i\in I} x_i$ for $x,x_i \in \Ji(L)$, and $x \not \leq x_i$ for all $i \in I$, then $X=\{x_i:i\in I\}$ is called a \emph{non-trivial cover} of $x$. For any $X,Y\subseteq L$, we say that $Y$ refines $X$, and write $Y\ll X$, if for every $y \in Y$ there exists  $x \in X$ such that $y\leq x$. The set $X\subseteq \Ji (L)$ is called a \emph{minimal cover} for $x$, if $Y\ll X$ can be a cover of $x$, only if $X\subseteq Y$. In other words, no $x_i \in X$ can be deleted, or replaced by a set of join irreducibles $Z$ with $z< x_i$ for all $z \in Z$, to obtain another cover for $x$. Finally, a binary relation $D$ is defined on $\Ji (L)$: $xDy$ iff $y\in X$ for some minimal cover $X$ of $x$.

We say that a lattice $L$ is \emph{without $D$-{cycles}} if it does not have $x_1,\dots, x_n \in \Ji (L)$, where $n >2$, such that $x_iDx_{i+1}$ and $x_1=x_n$. It is well-known that every finite lattice without $D$-cycles is \jsd\!. In lattice literature, the lattices without $D$-cycles are known as \emph{lower bounded}. In section \ref{NP} we will also briefly mention \emph{bounded} lattices: a lattice $L$ is bounded if both $L$ and the dual lattice $L^\delta$ are without $D$-cycles.

\section{Minimum unit basis versus optimum basis}\label{min-opt}

The following result of D. Maier \cite{Mai} establishes the connections between different types of effective bases.

\begin{thm}
Every optimum basis is minimum, and every minimum basis is non-redundant.
\end{thm}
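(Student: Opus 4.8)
The second assertion is immediate. If a minimum basis $\Sigma$ were redundant, some implication $(U\to V)\in\Sigma$ could be deleted with $\Sigma\setminus\{U\to V\}$ still defining the same closure system; deleting an implication cannot force two distinct premises to coincide, so $\Sigma\setminus\{U\to V\}$ is again an aggregated basis, and it has strictly fewer implications than $\Sigma$, contradicting minimality.

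For the first assertion I would begin with two routine reductions. An optimum basis $\Sigma$ must be aggregated: if it contained $X\to Y_1$ and $X\to Y_2$ with a common premise, replacing them by $X\to(Y_1\cup Y_2)$ is legitimate and strictly decreases $s(\Sigma)$ by $|X|\geq 1$. For the same reason, deleting any single implication strictly decreases $s(\Sigma)$, so an optimum basis is also non-redundant. Next I would fix the target cardinality. By Theorem~\ref{DG}(1) the canonical basis $\Sigma_{\mathcal C}$ is minimum with $|\Sigma_{\mathcal C}|=|\mathcal C|$, while by Theorem~\ref{DG}(2) each $C\in\mathcal C$ is matched to some $(U\to V)\in\Sigma$ with $\sigma(U)=C$; since $\sigma(U)$ recovers $C$, this matching is injective, so every aggregated basis has at least $|\mathcal C|$ implications, and ``being minimum'' means ``having exactly $|\mathcal C|$ implications''. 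Thus it suffices to build an injection from an optimum $\Sigma$ into $\mathcal C$.

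The plan for that injection is to map $(U\to V)\in\Sigma$ to $\sigma(U)$, after checking that for an optimum $\Sigma$ this is always a \emph{critical} set and that the map is injective. The easy half: if $\sigma(U)$ were closed it would equal $\phi(U)$, and then Lemma~\ref{2-3-extended}, applied with $W=U$ and $\Sigma'=\{(X\to Y)\in\Sigma:\phi(X)=\phi(U)\}$, shows that $U\to\sigma(U)=U\to\phi(U)$, hence $U\to V$, follows from $\Sigma\setminus\Sigma'\subseteq\Sigma\setminus\{U\to V\}$, so $(U\to V)$ is redundant, contradicting the previous paragraph. The harder half is to rule out the possibility that $\sigma(U)$ is quasi-closed but not critical, and to prove injectivity, i.e.\ that an optimum basis has no two implications whose premises saturate to the same critical set. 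In both cases one has a ``surplus'' implication that shares the closure of its premise with another implication of $\Sigma$, and Lemma~\ref{2-3-extended} should again let that surplus implication be recovered from the rest of the basis once its conclusion is trimmed to the part not already forced --- which cannot enlarge the basis --- again contradicting non-redundancy.

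The main obstacle is precisely this last step: verifying that, in an \emph{optimum} basis, the conclusions are tight enough that such surplus implications can really be deleted or shortened without any compensating growth elsewhere. This redistribution bookkeeping is the non-trivial heart of Maier's original argument \cite{Mai}, with Wild's lemma (Lemma~\ref{2-3-extended}) serving as the mechanism that produces the required inferences from a prescribed sub-basis.
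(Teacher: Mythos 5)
The paper itself does not prove this theorem --- it is quoted from Maier \cite{Mai} --- so the only fair comparison is with the machinery the paper builds around it, namely Wild's Lemma \ref{2-3-extended} and the merge argument of Lemma \ref{min}. Your proof of the second assertion is correct, and your reductions for the first assertion (an optimum basis is aggregated and non-redundant; ``minimum'' means $|\Sigma|=|\mathcal C|$ by Theorem \ref{DG}; $\sigma(U)$ is never closed for a premise $U$ of a non-redundant basis) are all sound.

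The genuine gap is that you stop exactly at the decisive step and declare it ``the non-trivial heart of Maier's argument,'' when in fact the tools you have already assembled close it in a few lines --- and the contradiction you should aim for is with \emph{optimality} (strict decrease of $s(\Sigma)$), not with non-redundancy as your last sentence suggests. Concretely: suppose the optimum $\Sigma$ has more than $|\mathcal C|$ implications. By Theorem \ref{DG}(2) fix, for each $C\in\mathcal C$, a designated $(X_C\to Y_C)\in\Sigma$ with $\sigma(X_C)=C$; by pigeonhole some $(U\to V)\in\Sigma$ is not designated. Since $\sigma(U)$ is quasi-closed, there is a critical $C_0\subseteq\sigma(U)$ with $\phi(C_0)=\phi(U)$, and $(X_{C_0}\to Y_{C_0})\ne(U\to V)$. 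Both of these implications lie in $\Sigma'=\{(X\to Y)\in\Sigma:\phi(X)=\phi(U)\}$, so Lemma \ref{2-3-extended} gives a derivation of $U\to\sigma(U)$, hence of $U\to X_{C_0}$ (as $X_{C_0}\subseteq C_0\subseteq\sigma(U)$), from $\Sigma\setminus\Sigma'$. Therefore $\bigl(\Sigma\setminus\{X_{C_0}\to Y_{C_0},\,U\to V\}\bigr)\cup\{X_{C_0}\to Y_{C_0}\cup V\}$ is again a basis (note $V\subseteq\phi(U)=\phi(X_{C_0})$, so the new implication follows from $\Sigma$), and its size is at most $s(\Sigma)-|U|-|V|-|Y_{C_0}|+|Y_{C_0}\cup V|\le s(\Sigma)-|U|<s(\Sigma)$, contradicting optimality. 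No ``redistribution bookkeeping'' beyond this single inequality is needed; this is precisely the replacement the paper performs in Lemma \ref{min}, measured by $s$ instead of by the number of unit implications.
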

While the second implication is rather straightforward, the first one requires a non-trivial argument. Moreover, neither of these two statements can be reversed.

Given any basis $\Sigma$, we call it a \emph{minimum unit} basis, if its unit expansion $\Sigma_u$ has the minimum number of (unit) implications among all possible unit bases for this closure system. Being a minimum unit basis is equivalent to having right-side optimization: $s_R=|Y_1|+\dots +|Y_n|$ is minimum among all bases for this system. 
The problem of determining a minimum unit basis is associated with the problem of finding the minimum CNF-representation of Boolean functions, or minimum representation of a directed hypergraph, see E.~Boros et al. \cite{B11}.

It turns out that minimum unit bases and optimum bases of any closure system have an intimate connection. 
Our goal in this section is to extend Maier's result to include minimum unit bases into a hierarchy: \emph{every optimum basis is right-side optimum}.


An important part of the statement is that, in every optimum basis, the left side of every implication has a \emph{fixed size} $k_C$, $C \in \mathcal{C}$, that does not depend on the choice of the optimum basis.  This makes it into a parameter of the closure system itself. 

\begin{thm} \label{W}
Let $\<S,\phi\>$ be a closure system.
\begin{itemize}
\item[(I)] If $\Sigma'$ is a non-redundant basis, then $\{\sigma(U): (U\rto V) \in \Sigma'\}\subseteq \mathcal{Q}$.
\item[(II)] Let $\Sigma_O$ be an optimum basis. For any critical set $C$, let $X_C\rto Y_C$ be an implication from this basis with $\sigma(X)=C$. Then $|X_C|=k_C:=min\{|U|: U\subseteq C, \phi(U)=\phi(C)\} \,=\, min\{|U|: U\subseteq C, \sigma(U)=C\}$.
\end{itemize}
\end{thm}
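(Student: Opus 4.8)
The plan is to handle the two parts separately, using the saturation operator and Wild's Lemma \ref{2-3-extended} throughout.

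For part (I): Let $\Sigma'$ be non-redundant and let $(U \rto V) \in \Sigma'$. I want to show $\sigma(U)$ is quasi-closed, i.e.\ that $\sigma(U)$ is not closed and that $\sigma(U) \cap Y$ is closed for every closed $Y$ with $\sigma(U) \not\subseteq Y$. If $\sigma(U)$ were closed, then $\sigma(U) = \phi(U) \supseteq V$, but then by Lemma \ref{2-3-extended} applied with $W = U$ (noting $\phi(W) = \phi(U)$, so that $W \rto \sigma(W) = W \rto \phi(U)$ follows from $\Sigma' \setminus \Sigma'_U$ where $\Sigma'_U = \{(X\rto Y)\in\Sigma' : \phi(X)=\phi(U)\}$), one could derive $U \rto V$ without using $U \rto V$ itself, contradicting non-redundancy. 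Actually the cleaner route: since $\sigma$ is the closure operator whose closed sets are the $\phi$-closed sets together with the quasi-closed sets, $\sigma(U)$ is by definition either closed or quasi-closed; so I only need to rule out "closed." That is exactly the argument just given. Alternatively, use the standard fact that if $U\rto V$ is redundant-removable whenever $\sigma(U)=\phi(U)$; either phrasing works.

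For part (II), the inequality $|X_C| \geq k_C$ is immediate from the definition of $k_C$ once we know $\sigma(X_C) = C$ forces $\phi(X_C) = \phi(C)$ and $X_C \subseteq \sigma(X_C) = C$; and the two descriptions of $k_C$ (via $\phi(U)=\phi(C)$ versus $\sigma(U)=C$) coincide because if $U \subseteq C$ and $\phi(U) = \phi(C)$ then $\sigma(U) \subseteq C$ and $\sigma(U)$ is either closed (impossible, as its closure would be $\phi(C)$ which is not quasi-closed but $C$ is, and $C$ is the unique minimal quasi-closed set below... wait, uniqueness of critical sets is not assumed here) — so instead: if $\phi(U)=\phi(C)$ with $U\subseteq C$, then $\sigma(U)$ is a quasi-closed (or closed) set contained in $C$ with the same closure; if closed it equals $\phi(C)\ne C$ contradicting $\sigma(U)\subseteq C$ unless... hmm, one must be careful. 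The safe statement is: $\sigma(U)\subseteq\sigma(C)=C$ and $\phi(\sigma(U))=\phi(U)=\phi(C)$; if $\sigma(U)$ is closed then $\sigma(U)=\phi(C)\supsetneq C$ is absurd since $\sigma(U)\subseteq C$, so $\sigma(U)$ is quasi-closed, and by minimality of the critical set $C$ among quasi-closed sets with closure $\phi(C)$ — here we do need that $C$ is critical, which it is — we'd get $C \subseteq \sigma(U)$, hence $\sigma(U)=C$. (If there are several critical sets with the same closure, this shows each $U$ with $\phi(U)=\phi(C)$, $U\subseteq C$ saturates to $C$, which is all we need.) For the reverse inequality $|X_C| \leq k_C$: suppose for contradiction there is $U \subseteq C$ with $\phi(U) = \phi(C)$ and $|U| < |X_C|$. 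Build a new basis $\Sigma^*$ from $\Sigma_O$ by replacing the implication $X_C \rto Y_C$ with $U \rto Y_C$ (or more precisely $U \rto (\phi(C)\setminus U)$, shrinking the conclusion accordingly — this only decreases size). I must check $\Sigma^*$ is still a basis: the implications of $\Sigma_O$ with premise-saturation equal to $C$ may be several, so I should apply this replacement simultaneously to all of them, or argue that $U \rto Y_C$ together with the rest of $\Sigma_O$ still derives everything. Using Lemma \ref{2-3-extended}: from $\Sigma_O \setminus \Sigma'_C$ (where $\Sigma'_C$ collects implications with $\phi$-value of premise equal to $\phi(C)$) plus $U\rto\sigma(U)=C$ we recover all of $\Sigma'_C$; and $U \rto C$ follows from $U \rto Y_C$ plus $\Sigma_O$; so $\Sigma^*$ is equivalent to $\Sigma_O$. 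Then $s(\Sigma^*) < s(\Sigma_O)$, contradicting optimality. Hence $|X_C| = k_C$.

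The main obstacle I anticipate is the bookkeeping in part (II) when several implications of the optimum basis have premises saturating to the same critical set $C$: I must replace all of them consistently and verify via Lemma \ref{2-3-extended} that the modified set is still a basis of the same size-or-smaller, and that the strict decrease in size is genuine (the conclusion $Y_C$ should be replaced by $\phi(C)\setminus U$, and one checks $|U|+|\phi(C)\setminus U| = |\phi(C)| \le |X_C| + |Y_C|$ with strict inequality when $|U|<|X_C|$, since $|\phi(C)\setminus U| \le |\phi(C)| - |U \cap \phi(C)|$ and $U \subseteq C \subseteq \phi(C)$). Everything else is routine once the saturation operator's defining property and Wild's Lemma are in hand.
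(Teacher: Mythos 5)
The paper does not actually prove Theorem \ref{W}: it is stated as a known result (essentially Theorem 5 of Wild \cite{W94}, whence the label), and the surrounding lemmas only \emph{borrow} from that argument. So there is no in-paper proof to compare against; judged on its own, your argument is correct and uses exactly the tool the paper supplies, Lemma \ref{2-3-extended}. For (I): since $\sigma(U)$ is by construction either closed or quasi-closed, it suffices to rule out ``closed,'' and your application of Lemma \ref{2-3-extended} with $W=U$ gives $U\rto\sigma(U)=\phi(U)\supseteq V$ from $\Sigma'\setminus\{(X\rto Y)\in\Sigma':\phi(X)=\phi(U)\}\subseteq\Sigma'\setminus\{U\rto V\}$, contradicting non-redundancy. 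For (II): your identification of the two minima is right --- for $U\subseteq C$ with $\phi(U)=\phi(C)$ one has $\sigma(U)\subseteq\sigma(C)=C$, $\sigma(U)$ cannot be closed (else it would equal $\phi(C)\supsetneq C$), and minimality of the critical set $C$ forces $\sigma(U)=C$. The lower bound $|X_C|\geq k_C$ is then immediate, and for the upper bound the \emph{single} replacement $\Sigma^*=(\Sigma_O\setminus\{X_C\rto Y_C\})\cup\{U\rto Y_C\}$ already works: Lemma \ref{2-3-extended} yields $X_C\rto C$ from the implications of $\Sigma_O$ whose premises do not close to $\phi(C)$, all of which lie in $\Sigma^*$, and then $U\subseteq C$ together with $U\rto Y_C$ recovers $X_C\rto Y_C$; hence $\Sigma^*$ is an equivalent basis with $s(\Sigma^*)=s(\Sigma_O)-|X_C|+|U|<s(\Sigma_O)$. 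Your worry about treating all implications saturating to $C$ simultaneously is therefore unnecessary.

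One aside should be deleted: replacing the conclusion $Y_C$ by $\phi(C)\setminus U$ does \emph{not} ``only decrease size'' ($\phi(C)\setminus U$ may be far larger than $Y_C$), and the inequality $|\phi(C)|\leq |X_C|+|Y_C|$ invoked in your last paragraph is false in general. Simply keep the conclusion $Y_C$ (replacing it by $Y_C\setminus U$ if needed to respect the convention that premise and conclusion are disjoint --- this only shrinks the basis further), and the strict size decrease is immediate from $|U|<|X_C|$.
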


For the rest of this section we assume that $\Sigma_u$ is a minimum unit basis of the closure system $\<S,\phi\>$, and $\Sigma=(\Sigma_u)^{ag}$ is the aggregation of $\Sigma_u$.
The following two lemmas borrow from the argument of Theorem 5 in M.~Wild \cite{W94}.
   
\begin{lem}\label{min}
There exists a minimum basis $\Sigma^*$ such that $|\Sigma^*_u|=|\Sigma_u|$.
\end{lem}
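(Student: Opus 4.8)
The plan is to keep the right-side optimality that $\Sigma=(\Sigma_u)^{ag}$ already enjoys and to repair its possible failure of minimality. Since $\Sigma_u$ is a minimum unit basis, $s_R(\Sigma)=|\Sigma_u|$ is the least $R$-size of any basis of $\<S,\phi\>$. So I will transform $\Sigma$ by a finite sequence of local steps, each keeping the closure system unchanged and not increasing $s_R$, until the number of implications drops to $|\C|$. Because the canonical basis of Theorem~\ref{DG} has exactly $|\C|$ implications and, by part~(2), no basis has fewer, the resulting basis $\Sigma^*$ is then minimum; and $s_R(\Sigma^*)\le s_R(\Sigma)=|\Sigma_u|$ together with the minimality of $|\Sigma_u|$ forces $s_R(\Sigma^*)=|\Sigma_u|$, i.e. $|\Sigma^*_u|=|\Sigma_u|$.

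The steps, applied to the current basis (always kept non-redundant, performing a deletion first whenever possible), are of three kinds: \emph{(a)} delete a redundant implication; \emph{(b)} if two distinct implications $U_1\rto V_1$ and $U_2\rto V_2$ satisfy $\sigma(U_1)=\sigma(U_2)$, replace them by $U_1\rto (V_1\cup V_2)\setminus U_1$; \emph{(c)} if the basis is non-redundant, $\sigma$ is injective on premises, but some implication $U\rto V$ has $\sigma(U)=Q\notin\C$, then --- $Q$ being quasi-closed by Theorem~\ref{W}(I) but not critical --- there is a critical set $C\subsetneq Q$ with $\phi(C)=\phi(Q)$, and we replace $U\rto V$ by $C\rto V\setminus C$. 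In each case $s_R$ does not increase (for instance $|(V_1\cup V_2)\setminus U_1|\le |V_1|+|V_2|$), and the quantity $|\Sigma|+|\{U\text{ a premise}:\sigma(U)\notin\C\}|$ strictly decreases, so the process terminates. One checks routinely, using non-redundancy, that the conclusions produced in (b) and (c) are nonempty and disjoint from the new premise, so these are legitimate implications.

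The substantive point is that steps (b) and (c) again produce a basis of $\<S,\phi\>$. In both, every new implication holds in $\<S,\phi\>$, so every $\phi$-closed set respects the new family and the new closure operator lies pointwise below $\phi$. For the reverse inequality I would invoke Lemma~\ref{2-3-extended}. Consider step (b) and put $\Sigma'=\{(X\rto Y)\in\Sigma:\phi(X)=\phi(U_1)\}$; since $\sigma(U)\subseteq\phi(U)$ for every $U$ we have $\phi(\sigma(U))=\phi(U)$, hence $\phi(U_1)=\phi(\sigma(U_1))=\phi(\sigma(U_2))=\phi(U_2)$ and both discarded implications lie in $\Sigma'$. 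By Lemma~\ref{2-3-extended}, for $i=1,2$ the implication $U_i\rto\sigma(U_i)=\sigma(U_1)$ follows from $\Sigma\setminus\Sigma'$, which is contained in the new basis. Thus any closed set of the new system containing $U_i$ contains $\sigma(U_1)\supseteq U_1$, and then --- respecting $U_1\rto(V_1\cup V_2)\setminus U_1$ and using $V_i\cap U_i=\0$ --- it contains $V_i$; so the new closed sets respect $U_1\rto V_1$ and $U_2\rto V_2$, the only implications of $\Sigma$ removed, and the new closure operator lies pointwise above $\phi$ as well. Step (c) is the same argument with $Q$ in place of $\sigma(U_1)$ and $C$ in place of $U_1$.

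Finally, at termination the basis $\Sigma^*$ is non-redundant, $\sigma$ is injective on its premises, and every $\sigma(U)$ is critical; combined with surjectivity onto $\C$ (Theorem~\ref{DG}(2)) this makes $\sigma$ a bijection from $\Sigma^*$ onto $\C$, so $|\Sigma^*|=|\C|$, and $\Sigma^*$ (its premises being distinct, it is aggregated) is a minimum basis with $s_R(\Sigma^*)=|\Sigma_u|$. The only genuinely non-routine step is the basis-preservation under the merges (b) and (c), and that is exactly what Lemma~\ref{2-3-extended} is tailored to supply.
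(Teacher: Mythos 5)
Your proof is correct and follows essentially the same route as the paper: both absorb every ``extra'' implication into one implication per critical set, using Wild's Lemma~\ref{2-3-extended} together with Theorem~\ref{W}(I) and Theorem~\ref{DG}(2) to justify each merge while keeping the unit count from increasing, then conclude minimality by counting against $|\mathcal{C}|$. The only cosmetic difference is that you normalize premises to the critical sets themselves and organize the merges as a terminating rewriting system, whereas the paper folds each extra implication $U\rto V$ directly into a pre-selected implication $X_{C_0}\rto Y_{C_0}$ already present in $\Sigma$.
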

\begin{proof}
We start from $\Sigma$. Since $\Sigma_u$ is a minimum basis, $\Sigma$ is non-redundant. 

According to Theorem \ref{DG} (2), for each critical set $C$, we will be able to find an implication $(X_C\rto Y_C)\in \Sigma$, such that $\sigma(X_C)=C$. Suppose $\Sigma$ has another implication $U\rto V$ distinct from all $X_C\rto Y_C$, $C\in \mathcal{C}$. By Theorem \ref{W} (I) we have $\sigma(U)\in \mathcal{Q}$, and thus we can find a critical set $C_0\in \mathcal{C}$ such that $C_0\subseteq \sigma(U)$ and $\phi(C_0)=\phi(U)$.  The premise of the implication $X_{C_0} \rto Y_{C_0}$ from $\Sigma$ also satisfies
 $\phi(X_{C_0})=\phi(C_0)=\phi(U)$. 

By Lemma \ref{2-3-extended}, the implication $U\rto \sigma(U)$ follows from the set of implications $\Sigma\setminus\{X_{C_0}\rto Y_{C_0}, U\rto V\}$.  
Form $\Sigma^\circ=(\Sigma\setminus\{X_{C_0}\rto Y_{C_0}, U\rto V\})\cup \{X_{C_0}\rto Y_{C_0}\cup V\}$. Then $U\rto V$ follows from $\Sigma^\circ$. Moreover, $|\Sigma^\circ_u| \leq |\Sigma_u|$, but due to the minimality of $\Sigma_u$, we have $|\Sigma^\circ_u|= |\Sigma_u|$. 

Repeat this procedure, replacing each implication $U\rto V$ in $\Sigma^\circ$ distinct from all $X_C\rto Y_C$, obtaining a basis $\Sigma^*$. It will have the same number of implications as the canonical basis $\Sigma_C$, whence it is minimum. Moreover, $|\Sigma^*_u|=|\Sigma_u|$.
\end{proof}


Lemma \ref{min} suggests the following procedure for dealing with the problem of unit basis minimization.

\begin{cor}
Given a unit basis $\Sigma_u$ with the non-redundant aggregation $\Sigma$, in time polynomial in $s(\Sigma)$, one can build a unit basis $\Sigma_u^*$ such that $|\Sigma_u^*|\leq |\Sigma_u|$, while the aggregation $\Sigma^*$ of\/ $\Sigma_u^*$ is a minimum basis.
\end{cor}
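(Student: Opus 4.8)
The statement is essentially an algorithmic reformulation of Lemma~\ref{min}. The plan is to extract the procedure implicit in the proof of Lemma~\ref{min}, and to check that every step it uses can be carried out in time polynomial in $s(\Sigma)$. First I would start from the given non-redundant aggregated basis $\Sigma$, and compute for it the canonical basis $\Sigma_C$ (equivalently, identify the critical sets). By Day's algorithm \cite{D92} (cited in the introduction), the canonical basis can be obtained from any given basis in time polynomial in the size of that basis; so this step is fine. As a byproduct, for each critical set $C$ we can also locate, among the implications of $\Sigma$, one implication $X_C\rto Y_C$ with $\sigma(X_C)=C$ — this existence is guaranteed by Theorem~\ref{DG}(2), and checking $\sigma(X_C)=C$ for a candidate amounts to a closure-type computation, which is polynomial.

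Next, I would iterate the replacement step from the proof of Lemma~\ref{min}: while $\Sigma$ contains an implication $U\rto V$ not among the chosen $X_C\rto Y_C$, use Theorem~\ref{W}(I) to conclude $\sigma(U)\in\mathcal Q$, find a critical set $C_0$ with $C_0\subseteq\sigma(U)$ and $\phi(C_0)=\phi(U)$, and form $\Sigma^\circ=(\Sigma\setminus\{X_{C_0}\rto Y_{C_0},\,U\rto V\})\cup\{X_{C_0}\rto Y_{C_0}\cup V\}$. The key points to verify here are: (a) finding such a $C_0$ is polynomial — one computes $\sigma(U)$, runs through the (polynomially many, already computed) critical sets, and tests containment and equality of closures; (b) the number of iterations is bounded, since each iteration strictly decreases $|\Sigma|$ while merging only conclusions, so after at most $|\Sigma|$ steps no stray implication remains and the resulting basis has exactly $|\mathcal C|$ implications, hence is minimum; (c) the size $s(\Sigma^\circ)$ never exceeds $s(\Sigma)$ (conclusions are merged, premises of surviving implications are unchanged, and one implication is deleted), so all intermediate closure computations stay polynomial in the original $s(\Sigma)$. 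At the end we obtain an aggregated minimum basis $\Sigma^*$, and by the argument of Lemma~\ref{min} its unit expansion satisfies $|\Sigma^*_u|=|\Sigma_u|$; in particular setting $\Sigma_u^*=(\Sigma^*)_u$ gives $|\Sigma_u^*|=|\Sigma_u|\leq|\Sigma_u|$ and $(\Sigma_u^*)^{ag}=\Sigma^*$ minimum.

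The main obstacle is purely bookkeeping about complexity: one must be careful that closure and saturation computations are performed only against the current basis (which stays no larger than the original), that the list of critical sets is computed once up front, and that the loop terminates after polynomially many rounds. There is also a small subtlety in that Lemma~\ref{min}'s proof uses Lemma~\ref{2-3-extended} to justify correctness of the merge, but that lemma is not needed for the running-time analysis — it only guarantees that $\Sigma^\circ$ still defines the same closure system, which we already know from Lemma~\ref{min}. So the corollary follows by simply re-reading the proof of Lemma~\ref{min} as an algorithm and annotating each step with its cost.
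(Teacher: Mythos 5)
Your proposal is correct and follows essentially the same route as the paper: the published proof likewise just reads the proof of Lemma~\ref{min} as a polynomial-time procedure, phrased as partitioning $\Sigma$ into blocks indexed by critical sets and collapsing each block into a single implication $D\rto F'$, which is exactly what your iterated merge produces after all rounds. One small caveat: since here $\Sigma_u$ is not assumed to be a minimum unit basis, the argument of Lemma~\ref{min} yields only $|\Sigma^*_u|\leq|\Sigma_u|$ rather than equality (merged conclusions may overlap), but that inequality is all the corollary claims.
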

Indeed, it follows from the proof of Lemma \ref{min} that there exists a partition of $\Sigma=\bigcup_{C \in \mathcal{C}} \Sigma_C$ into blocks $\Sigma_C$ with respect to critical sets $C \in \mathcal{C}$: if $(A\rto B)\in \Sigma_C$, then $C\subseteq \sigma(A), B\subseteq \phi(C)$. Moreover, there exists at least one implication $(D\rto F) \in \Sigma_C$ such that $C=\sigma (D)$. Thus, the whole block can be replaced by $D\rto F'$, where $F'$ is the union of all right side sets of implications in $\Sigma_C$. Note that it takes polynomial time in $s(\Sigma)$ to build the partition $\Sigma=\bigcup_{C \in \mathcal{C}} \Sigma_C$. In particular, every minimum unit basis $\Sigma_u$ can be turned into a minimum unit $\Sigma_u^*$, whose aggregation is minimum. 

\begin{lem}\label{premise}
There exists a minimum basis $\Sigma^{**}$ such that, for every $(X_C\rto Y_C) \in \Sigma^{**}$, we have $|X_C|=k_C$. Moreover, $|\Sigma^{**}_u|=|\Sigma_u|$.
\end{lem}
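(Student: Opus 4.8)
The plan is to start from the minimum basis $\Sigma^*$ produced by Lemma~\ref{min}, which satisfies $|\Sigma^*_u| = |\Sigma_u|$, and to shrink the premises of its implications one at a time without changing either the closure system or the unit count. Recall from the proof of Lemma~\ref{min} that $\Sigma^*$ is indexed by the critical sets: it has exactly one implication $X_C \rto Y_C$ for each $C \in \mathcal{C}$, with $\sigma(X_C) = C$ and $Y_C \subseteq \phi(C) \setminus C$. Fix such a $C$ and let $k_C = \min\{|U| : U \subseteq C,\ \sigma(U) = C\}$ as in Theorem~\ref{W}(II); by definition $|X_C| \geq k_C$, so it suffices to show we may replace $X_C \rto Y_C$ by $U_C \rto Y_C$ for some $U_C \subseteq C$ of size exactly $k_C$ with $\sigma(U_C) = C$, all while keeping the basis minimum and the unit expansion of the same size.

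The key step is the replacement itself. Choose $U_C \subseteq C$ with $|U_C| = k_C$ and $\sigma(U_C) = C$; such a set exists by the definition of $k_C$. Note $\phi(U_C) = \phi(C) = \phi(X_C)$, so the hypotheses of Wild's Lemma~\ref{2-3-extended} apply with $U := U_C$ and the role of $U \rto V$ played by $X_C \rto Y_C$: the implication $U_C \rto \sigma(U_C) = U_C \rto C$ follows from $\Sigma^* \setminus \Sigma'_C$, where $\Sigma'_C = \{(X \rto Y) \in \Sigma^* : \phi(X) = \phi(C)\} = \{X_C \rto Y_C\}$ since $\Sigma^*$ is partitioned one implication per critical set. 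Now form
\[
\Sigma^{**} = \bigl(\Sigma^* \setminus \{X_C \rto Y_C\}\bigr) \cup \{U_C \rto Y_C\}.
\]
I must check three things. First, $\Sigma^{**}$ defines the same closure system: from $\Sigma^{**}$ we recover $U_C \rto C$ (hence $U_C \rto X_C$, since $X_C \subseteq C$) via Lemma~\ref{2-3-extended}, and then $X_C \rto Y_C$ follows by combining $U_C \rto X_C$ with $U_C \rto Y_C$; conversely $U_C \rto Y_C$ follows from $\Sigma^*$ because $\sigma(U_C) = C = \sigma(X_C)$ forces $\phi(U_C) \supseteq Y_C$ through a $\Sigma^*$-inference. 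Second, $|\Sigma^{**}| = |\Sigma^*|$, and since $\Sigma^*$ is minimum and $\Sigma^{**}$ is a basis, $\Sigma^{**}$ is minimum. Third, the unit count: the only changed block is the one for $C$, and both $X_C \rto Y_C$ and $U_C \rto Y_C$ expand to exactly $|Y_C|$ unit implications, so $|\Sigma^{**}_u| = |\Sigma^*_u| = |\Sigma_u|$. Iterating over all $C \in \mathcal{C}$ yields a minimum basis in which every premise has size exactly $k_C$, and the unit count is preserved at each stage.

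The main obstacle is making the "same closure system" verification airtight, specifically the direction showing $X_C \rto Y_C$ follows from $\Sigma^{**}$. The subtlety is that after replacing $X_C$ by $U_C$ the block structure changes, and one must be sure that no implication outside the $C$-block was secretly relying on $X_C \rto Y_C$ with its original premise; this is handled by the observation that Lemma~\ref{2-3-extended} only discards $\Sigma'_C$, which is exactly the single implication we are replacing, so every other implication of $\Sigma^*$ survives untouched in $\Sigma^{**}$ and continues to be derivable. A secondary point requiring care is that after the replacement $\sigma(U_C) = C$ still holds with respect to the \emph{new} basis (equivalently, the closure system is genuinely unchanged), which follows once the two bases are shown equivalent. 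Everything else is a routine application of Proposition~\ref{inference} and the partition from the proof of Lemma~\ref{min}.
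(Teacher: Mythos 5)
Your construction is the same as the paper's: start from the $\Sigma^*$ of Lemma \ref{min}, replace each premise $X_C$ by a generator $U_C\subseteq C$ of size $k_C$ with $\sigma(U_C)=C$, invoke Wild's Lemma \ref{2-3-extended} to justify the swap, and observe that only premises change so the unit count is fixed. However, the one substantive verification --- that $X_C\rto Y_C$ still follows from the new basis --- is argued in the wrong direction. You apply Lemma \ref{2-3-extended} with $W=U_C$ to obtain $U_C\rto C$, hence $U_C\rto X_C$, and then claim $X_C\rto Y_C$ ``follows by combining $U_C\rto X_C$ with $U_C\rto Y_C$.'' That inference is invalid: from $A\rto B$ and $A\rto D$ one cannot conclude $B\rto D$; a $\Sigma^{**}$-closed set containing $X_C$ but not $U_C$ triggers neither of your two implications. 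What is needed is the opposite derivation, $X_C\rto U_C$, so that the new implication $U_C\rto Y_C$ fires on the $\Sigma^{**}$-closure of $X_C$.

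The fix is immediate and is exactly the paper's step: apply Lemma \ref{2-3-extended} to $\Sigma^*$ with $U\rto V:=X_C\rto Y_C$ and $W:=X_C$, so that $X_C\rto\sigma(X_C)=X_C\rto C$ follows from $\Sigma^*\setminus\Sigma'_C\subseteq\Sigma^*\setminus\{X_C\rto Y_C\}\subseteq\Sigma^{**}$. Since $U_C\subseteq C$, this gives $X_C\rto U_C$ from $\Sigma^{**}$, and chaining with $U_C\rto Y_C\in\Sigma^{**}$ yields $X_C\rto Y_C$. (Your application with $W=U_C$ is only relevant to the converse direction, and there the simpler observation $Y_C\subseteq\phi(C)=\phi(U_C)$ already suffices, as you note.) A further small inaccuracy: $\Sigma'_C$ need not equal $\{X_C\rto Y_C\}$, since distinct critical sets may have the same closure; this is harmless, because a derivation from the smaller set $\Sigma^*\setminus\Sigma'_C$ is a fortiori a derivation from $\Sigma^*\setminus\{X_C\rto Y_C\}$. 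The remaining points --- preservation of the unit count, minimality of $\Sigma^{**}$, and the iteration over all $C\in\mathcal{C}$ --- are fine.
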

\begin{proof}
Start with $\Sigma^*$ obtained in Lemma \ref{min}. Then $\sigma(X_C)=C$, and there exists $X_C^*$ with $\sigma(X_C^*)=C$ and $|X_C^*|=k_C$, as defined in Theorem \ref{W}(2). An implication $ X_C\rto C$ follows from $\Sigma^*\setminus \{X_C\rto Y_C\}$, due to Lemma \ref{2-3-extended}. Hence, $X_C\rto Y_C$ follows from $(\Sigma^*\setminus\{X_C\rto Y_C\})\cup \{X_C^*\rto Y_C\}$. Repeat this procedure for each implication in $\Sigma^*$, and obtain a new basis $\Sigma^{**}$. Since only the premise of each implication may change,
$|\Sigma^{**}_u|=|\Sigma^*_u|=|\Sigma_u|$.
\end{proof} 

The basis obtained as the result of previous two lemmas has a nice property.
\begin{lem} \label{opt}
$\Sigma^{**}$ is an optimum basis.
\end{lem}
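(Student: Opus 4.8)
The plan is to show that $\Sigma^{**}$ achieves the minimum possible size $s(\Sigma)$ among all bases of $\langle S,\phi\rangle$ by decomposing $s$ into its $L$-part and $R$-part and minimizing each separately. Recall from Theorem~\ref{W}(II) that for \emph{any} optimum basis $\Sigma_O$, the implication $X_C\rto Y_C$ with $\sigma(X_C)=C$ satisfies $|X_C|=k_C$; since an optimum basis is in particular minimum (Maier's theorem) and minimum bases are, up to the partition into blocks indexed by $\mathcal{C}$, in bijective correspondence with $\mathcal{C}$, it follows that for \emph{every} optimum basis $s_L(\Sigma_O)=\sum_{C\in\mathcal{C}}k_C$. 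Hence $s_L$ is bounded below by $\sum_{C\in\mathcal{C}}k_C$ on all bases (any basis can be thinned to a minimum one without increasing $s_L$, and then $s_L$ can only drop to $\sum k_C$), and this bound is attained by $\Sigma^{**}$, which by Lemma~\ref{premise} has $|X_C|=k_C$ for each of its implications and exactly $|\mathcal{C}|$ of them.

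Next I would handle the $R$-size. The key point is that $\Sigma^{**}$ arises from a minimum unit basis $\Sigma_u$ via aggregation and then premise-shrinking, and premise-shrinking does not touch conclusions, so $s_R(\Sigma^{**})=s_R(\Sigma^*)=s_R(\Sigma)$. Now for an aggregated basis $\Sigma=(\Sigma_u)^{ag}$ one has $s_R(\Sigma)=\sum_i|Y_i|=|\Sigma_u|$, because aggregation groups the unit implications by common premise and the total number of one-element conclusions is exactly the number of unit implications. Since $\Sigma_u$ is a \emph{minimum} unit basis, $|\Sigma_u|$ is minimal among all unit bases, and for an arbitrary basis $\Sigma^*$ its unit expansion $\Sigma^*_u$ is a unit basis with $|\Sigma^*_u|=s_R(\Sigma^*)$; therefore $s_R(\Sigma^{**})=|\Sigma_u|\leq|\Sigma^*_u|=s_R(\Sigma^*)$ for every basis $\Sigma^*$, i.e. $\Sigma^{**}$ is right-optimum.

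Finally I would combine the two halves: for any basis $\Sigma^*$,
\[
s(\Sigma^{**})=s_L(\Sigma^{**})+s_R(\Sigma^{**})=\sum_{C\in\mathcal{C}}k_C + |\Sigma_u| \leq s_L(\Sigma^*)+s_R(\Sigma^*)=s(\Sigma^*),
\]
using the two lower bounds established above. Hence $s(\Sigma^{**})\leq s(\Sigma^*)$ for all bases $\Sigma^*$, which is exactly the definition of $\Sigma^{**}$ being optimum.

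The main obstacle I anticipate is making the $s_L$ lower bound fully rigorous: one must argue that every basis can be converted, without increasing $s_L$, into a minimum basis whose blocks correspond bijectively to $\mathcal{C}$ (so that it has exactly $|\mathcal{C}|$ implications, one premise per critical set), and then invoke the minimality characterization of $k_C$ from Theorem~\ref{W}(II)/Lemma~\ref{premise} — being careful that $k_C$ is genuinely a lower bound on the size of a premise whose saturation is $C$, not merely the value attained by $\Sigma^{**}$. The $s_R$ side is comparatively clean once the identity $s_R((\Sigma_u)^{ag})=|\Sigma_u|$ and the invariance of $s_R$ under the Lemma~\ref{min}/Lemma~\ref{premise} transformations are noted; the proof of Lemma~\ref{min} already records that those transformations preserve $|\Sigma_u|$, and Lemma~\ref{premise} only alters premises.
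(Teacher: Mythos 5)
Your proof is correct and follows essentially the same route as the paper's: the left sizes are pinned at $\sum_{C\in\mathcal{C}}k_C$ via Theorem \ref{W}(II), and the right sizes are controlled by the minimality of $|\Sigma_u|$ through the identity $s_R(\Sigma)=|\Sigma_u|$ for aggregated bases; the paper merely packages this as a contradiction against a single optimum basis $\Pi$, for which $|A_C|=k_C=|X_C|$ holds directly, so it never needs a lower bound on $s_L$ over arbitrary bases. One repair to your version: the parenthetical justification that any basis can be ``thinned to a minimum one without increasing $s_L$'' is not right as stated (deleting implications from an arbitrary basis need not yield a minimum basis); the bound $s_L(\Sigma^*)\geq\sum_C k_C$ instead follows directly from Theorem \ref{DG}(2), since each critical set $C$ contributes a distinct implication $U\rto V$ in $\Sigma^*$ with $\sigma(U)=C$, whence $U\subseteq C$ and $|U|\geq k_C$ by the definition of $k_C$ in Theorem \ref{W}(II). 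Alternatively, comparing only against an optimum basis, as the paper does, sidesteps this point entirely.
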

\begin{proof}
Suppose $\Sigma^{**}=\{X_C\rto Y_C: C \in \mathcal{C}\}$ is not optimum. Consider an optimum basis $\Pi=\{A_C\rto B_C: C \in \mathcal{C}\}$, so that $s(\Pi)<s(\Sigma^{**})$. According to Theorem \ref{W}(II), $|A_C|=k_C=|X_C|$. It follows that $ \Sigma(|B_C|: C\in \mathcal{C})< \Sigma(|Y_C|: C \in \mathcal{C})$. Both sums represent the number of implications in the unit expansion of each basis. Thus, $|\Pi_u|<|\Sigma^{**}_u|=|\Sigma_u|$, which contradicts to the fact that $\Sigma_u$ is a minimum unit basis.
\end{proof}

Now we can prove the main result of this section.

\begin{thm}\label{main}
There exists a minimum unit basis whose aggregation is an optimum basis. The unit expansion of every optimum basis is a minimum unit basis.
\end{thm}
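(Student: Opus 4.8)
The plan is to assemble the theorem from the three preceding lemmas plus Lemma~\ref{opt}, together with Maier's theorem that every optimum basis is minimum. For the first sentence, I would start from any minimum unit basis $\Sigma_u$ of $\<S,\phi\>$. By the corollary following Lemma~\ref{min}, we may replace $\Sigma_u$ by a minimum unit basis whose aggregation $\Sigma$ is a minimum (aggregated) basis, so without loss of generality $\Sigma = (\Sigma_u)^{ag}$ is already non-redundant. Now run the constructions of Lemmas~\ref{min} and \ref{premise} in turn: these produce a minimum basis $\Sigma^{**} = \{X_C \rto Y_C : C \in \mathcal C\}$ with $|X_C| = k_C$ for every $C$, and with $|\Sigma^{**}_u| = |\Sigma_u|$, so $\Sigma^{**}_u$ is again a minimum unit basis. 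By Lemma~\ref{opt}, $\Sigma^{**}$ is an optimum basis. Thus $\Sigma^{**}$ is an optimum basis that is the aggregation of the minimum unit basis $\Sigma^{**}_u$, which proves the first assertion.

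For the second sentence, let $\Pi = \{A_C \rto B_C : C \in \mathcal C\}$ be an arbitrary optimum basis; I want to show $\Pi_u$ is a minimum unit basis, i.e.\ $|\Pi_u| = \sum_{C}|B_C|$ is minimum among all unit bases. Fix a minimum unit basis $\Sigma_u$ and, via the first part of the theorem, realize it (after the harmless replacement above, which does not change $|\Sigma_u|$) as the unit expansion of an optimum basis $\Sigma^{**} = \{X_C \rto Y_C\}$ with $|X_C| = k_C$. Since both $\Pi$ and $\Sigma^{**}$ are optimum, $s(\Pi) = s(\Sigma^{**})$. By Theorem~\ref{W}(II), $|A_C| = k_C = |X_C|$ for every $C$, so the left-side contributions to $s(\Pi)$ and $s(\Sigma^{**})$ agree term by term; subtracting, $\sum_C |B_C| = \sum_C |Y_C|$. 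But $\sum_C|Y_C| = |\Sigma^{**}_u| = |\Sigma_u|$ is the minimum number of unit implications, and $\sum_C |B_C| = |\Pi_u|$; hence $|\Pi_u| = |\Sigma_u|$ is minimum, so $\Pi_u$ is a minimum unit basis.

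A small point to handle carefully: Theorem~\ref{W}(II) and Theorem~\ref{DG}(2) are phrased in terms of a bijection between implications of a minimum/optimum basis and critical sets $C \in \mathcal C$, so I should note that an optimum basis is minimum (Maier), hence has exactly $|\mathcal C|$ implications, one per critical set with $\sigma$ of its premise equal to $C$; this justifies indexing both $\Pi$ and $\Sigma^{**}$ by $\mathcal C$ and comparing them coordinatewise. The only genuine content has already been isolated into Lemmas~\ref{min}--\ref{opt} and Theorem~\ref{W}, so the remaining obstacle is purely bookkeeping: making sure the ``replace $\Sigma_u$ by one whose aggregation is minimum'' step (the corollary after Lemma~\ref{min}) is invoked before applying Lemma~\ref{premise}, since Lemma~\ref{premise} is stated for a minimum unit basis with non-redundant aggregation. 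With that caveat, the theorem follows immediately by chaining the cited results.
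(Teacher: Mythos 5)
Your proposal is correct and follows essentially the same route as the paper: build $\Sigma^{**}$ via Lemmas~\ref{min}, \ref{premise} and \ref{opt} for the first claim, then compare an arbitrary optimum basis $\Pi$ with $\Sigma^{**}$ using Theorem~\ref{W}(II) to equate left-side sizes and hence right-side sizes for the second. Your extra preliminary invocation of the corollary after Lemma~\ref{min} is harmless but not needed, since the paper's Lemma~\ref{min} already derives non-redundancy of the aggregation directly from the minimality of $\Sigma_u$.
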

\begin{proof}
According to Lemma \ref{opt}, the unit expansion of the basis $\Sigma^{**}=\{X_C\rto Y_C: C \in \mathcal{C}\}$ has the same number of implications as $\Sigma_u$, hence, $\Sigma^{**}_u$ is also a minimum unit basis. Besides, $(\Sigma^{**}_u)^{ag}=\Sigma^{**}$, which is optimum. This proves the first statement.
Take another optimum basis $\Pi=\{A_C\rto B_C: C \in \mathcal{C}\}$. Then $s(\Pi)=s(\Sigma^{**})$. Moreover,
 $|A_C|=k_C=|X_C|$, for each $C\in \mathcal{C}$, hence, $ \Sigma(|B_C|: C\in \mathcal{C})=\Sigma(|Y_C|: C \in \mathcal{C})$. The latter equality means that the unit expansions of both bases have the same number of implications. Hence, $\Pi_u$ is also a minimum unit basis, and the second statement is also proved.
\end{proof}

\begin{cor}\label{lr}
A basis $\Sigma$ is optimum if and only if it is left-side optimum and right-side optimum.
\end{cor}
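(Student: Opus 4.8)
The plan is to recast the statement numerically. Write $m$, $m_L$, $m_R$ for the minimum values of $s$, $s_L$, $s_R$ over all bases of $\<S,\phi\>$ (these exist, since a basis exists and the quantities are non-negative integers). Because $s(\Sigma^*)=s_L(\Sigma^*)+s_R(\Sigma^*)$ for every basis, one always has $m\geq m_L+m_R$, and every optimum basis $\Sigma$ satisfies $s_L(\Sigma)\geq m_L$ and $s_R(\Sigma)\geq m_R$. Hence, once we establish the reverse inequality $m=m_L+m_R$, the corollary is immediate: if $s_L(\Sigma)=m_L$ and $s_R(\Sigma)=m_R$ then $s(\Sigma)=m$, so $\Sigma$ is optimum; conversely if $s(\Sigma)=m=m_L+m_R$ then both of the displayed inequalities must be equalities. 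So the whole task reduces to producing a single basis that realizes $m_L$ and $m_R$ simultaneously.

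The natural candidate is any optimum basis $\Sigma_O$. First, $\Sigma_O$ is right-side optimum: by Theorem \ref{main} its unit expansion is a minimum unit basis, and being a minimum unit basis is exactly being right-side optimum, so $s_R(\Sigma_O)=m_R$. Second, I claim $s_L(\Sigma_O)=m_L$. By Maier's theorem $\Sigma_O$ is minimum, and it must be aggregated (merging two implications sharing a premise would strictly decrease $s$), so it has exactly $|\mathcal{C}|$ implications, which by Theorem \ref{DG}(2) together with a counting argument are exactly one $X_C\rto Y_C$ per critical set $C$, with $\sigma(X_C)=C$. Theorem \ref{W}(II) then gives $|X_C|=k_C$, so $s_L(\Sigma_O)=\sum_{C\in\mathcal{C}}k_C$.

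It remains to check that $\sum_{C\in\mathcal{C}}k_C\leq s_L(\Sigma^*)$ for every basis $\Sigma^*$, which forces $m_L=\sum_C k_C=s_L(\Sigma_O)$. Here I would again invoke Theorem \ref{DG}(2): for each $C$ it supplies an implication $(U_C\rto V_C)\in\Sigma^*$ with $\sigma(U_C)=C$, and these are pairwise distinct for distinct $C$. From $\sigma(U_C)=C$ we get $U_C\subseteq C$; moreover $\sigma(U_C)\subseteq\phi(U_C)\subseteq\phi(C)$ forces $\phi(U_C)=\phi(C)$, so by the definition of $k_C$ in Theorem \ref{W}(II) we have $|U_C|\geq k_C$. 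Summing over $C$ yields $s_L(\Sigma^*)\geq\sum_C k_C$. Then $m=s(\Sigma_O)=s_L(\Sigma_O)+s_R(\Sigma_O)=m_L+m_R$, which closes the argument.

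The only delicate step is the left-side bound: this is where genuine structural input is needed --- that $k_C$ is a parameter of the closure system independent of the choice of basis (Theorem \ref{W}(II)), together with the Duquenne--Guigues property (Theorem \ref{DG}(2)) --- and one has to be a little careful that the implications selected for distinct critical sets are distinct and that $\sigma(U_C)=C$ really pins down $\phi(U_C)=\phi(C)$. Everything else is formal manipulation of the identity $s=s_L+s_R$.
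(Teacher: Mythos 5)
Your proof is correct and takes essentially the same route as the paper's: the substance in both is that any optimum basis is simultaneously right-side optimum (Theorem \ref{main}) and left-side optimum (Theorem \ref{W}(II) combined with Theorem \ref{DG}(2), which you spell out in slightly more detail than the paper does), and the converse direction is the same formal manipulation of $s=s_L+s_R$ against a reference optimum basis, which your identity $m=m_L+m_R$ merely repackages.
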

\begin{proof}
It follows from Theorem \ref{W} (II) that every optimum basis is left-side optimum. It was shown in Theorem \ref{main} that every optimum basis is a minimum unit basis, equivalently, it is right-side optimum.

Vice versa, let $\Sigma$ be both left-optimum and right-optimum and let $\Sigma_O$ be any optimum basis. Recall that $s(\Sigma_O)=s_L(\Sigma_O) + s_R(\Sigma_O)$. It follows from left- and right-optimality of $\Sigma$ that $s_L(\Sigma) \leq s_L(\Sigma_O)$ and $s_R(\Sigma) \leq s_R(\Sigma_O)$. Hence, $s(\Sigma) \leq s(\Sigma_O)$, which implies $s(\Sigma) = s(\Sigma_O)$ and $\Sigma$ is optimum.
\end{proof}

It was proved in D.~Maier \cite{Mai} that finding an optimum basis of a given closure system is an NP-complete problem. A similar result about the minimum unit basis in the form of minimal directed hyper-graph representation was established later in G.~Ausiello  et al.\cite{AAS86}. It follows from Theorem \ref{main} that the Maier's result can be obtained for free from the latter.

In \cite{W00}, M.Wild proved that, given any basis $\Sigma$ of a closure system whose lattice of closed sets is \emph{modular}, one can obtain an optimum basis of this system in time $O(s(\Sigma)^2)$. Moreover, it was shown in C.~Herrmann and M.~Wild \cite{HW96} that the test for the modularity of the closure system can also be achieved in polynomial time. This has the following consequence.

\begin{cor}
Let $\<S,\phi\>$ be a finite closure system, whose lattice of closed sets is modular. Then there is a polynomial time algorithm to obtain a minimum unit basis for this system. 
\end{cor}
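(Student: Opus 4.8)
The plan is to chain together three ingredients already available: (i) the result of Wild \cite{W00} that for a closure system whose closure lattice is modular, an optimum basis can be computed from any given basis $\Sigma$ in time $O(s(\Sigma)^2)$; (ii) the result of Herrmann and Wild \cite{HW96} that modularity of the closure system can be tested in polynomial time; and (iii) our Theorem \ref{main}, which says that the unit expansion of any optimum basis is a minimum unit basis. The point is simply that ``minimum unit basis'' is exactly the object we want, and it is obtained from an optimum basis by the trivial syntactic operation of unit expansion.

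First I would observe that the input to the algorithm is some basis $\Sigma$ of the modular closure system $\<S,\phi\>$ (any presentation of the system will do; if the system is given by its closure operator one first extracts, e.g., the canonical basis). Then I would invoke \cite{W00} to produce, in time $O(s(\Sigma)^2)$, an optimum basis $\Sigma_O$. Next, form the unit expansion $(\Sigma_O)_u$, replacing each implication $X_i \rto Y_i$ by the family $\{X_i \rto y : y \in Y_i\}$; this takes time linear in $s(\Sigma_O) \leq s(\Sigma)$, since $s(\Sigma_O)$ is bounded by the size of any basis, in particular by $s(\Sigma)$. By Theorem \ref{main}, $(\Sigma_O)_u$ is a minimum unit basis of $\<S,\phi\>$. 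All steps are polynomial, so the whole procedure is polynomial in the size of the input basis.

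The only mild subtlety — and the step I would flag as needing a sentence of justification rather than being completely automatic — is bookkeeping on the size parameters: one must note that $s(\Sigma_O) \le s(\Sigma)$ so that the unit-expansion step and the output size are controlled by the original input, and that the number of clauses produced, $\sum_i |Y_i|$, is precisely $s_R(\Sigma_O)$, which Theorem \ref{main} (via Corollary \ref{lr}) identifies as the minimum possible. There is no genuine obstacle here; the corollary is essentially a remark combining \cite{W00}, \cite{HW96}, and Theorem \ref{main}, and I would present it as such, with the modularity test of \cite{HW96} mentioned only to note that one can also \emph{recognize} when this fast algorithm applies.
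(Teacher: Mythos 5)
Your argument is correct and is essentially the paper's own proof: apply Wild's polynomial-time construction of an optimum basis for modular closure systems and then take its unit expansion, which Theorem \ref{main} certifies to be a minimum unit basis. The additional size bookkeeping you include is harmless but not needed beyond a remark.
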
 
Indeed, according to Theorem \ref{main}, this basis is the unit expansion of the optimum basis built in Wild \cite{W00}.

\section{Optimizing binary part of a basis}\label{bin}

In this section we touch upon the issue of optimization of the binary part of any basis. It turns out that the binary part is independent of the non-binary part, in the sense that there are parameters of optimization for the binary part pertinent to a closure system itself, and independent of the basis that is considered. These parameters are similar to the parameters $k_C$ of left-side optimization considered in Theorem \ref{W} (II).

For this section in particular, it is important that we consider a \emph{standard} closure system $\<S,\phi\>$, where it is assumed that $\phi(\{a\})\setminus \{a\}$ is a closed set, for every $a \in S$. This set will be denoted $A_*$. 

First, we observe that singletons from $S$ that are not closed are always represented in the binary part of any basis.

\begin{lem}
Let $\Sigma$ be any aggregated basis of a standard closure system  $\<S,\phi\>$.  Then $(a\rto B)\in \Sigma$ for some nonempty $B \subseteq S$ iff $\phi(\{a\})\setminus \{a\} \not = \emptyset$.
\end{lem}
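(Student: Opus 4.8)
The plan is to prove both implications directly from the definitions, using the hypothesis that the system is standard.

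\textbf{($\Leftarrow$)} Suppose $\phi(\{a\})\setminus\{a\}\neq\emptyset$, and pick $b\in\phi(\{a\})\setminus\{a\}$. Then the implication $a\rto b$ holds in $\<S,\phi\>$, so by Proposition \ref{inference} there is a $\Sigma$-inference $\sigma_k=A_k\rto B_k$, $k\leq m$, of $b$ from $\{a\}$. The key point is to look at the first implication $A_1\rto B_1$ of this inference: condition (I) forces $A_1\subseteq\{a\}$, and since premises of implications in $\Sigma$ are nonempty, $A_1=\{a\}$. Because $\Sigma$ is aggregated, $A_1=\{a\}$ is the premise of a \emph{unique} implication $a\rto B$ in $\Sigma$, with $B=B_1\neq\emptyset$. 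This gives the desired implication.

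\textbf{($\Rightarrow$)} Suppose $(a\rto B)\in\Sigma$ with $B\neq\emptyset$, and pick $b\in B$. Then $b\in\phi(\{a\})$. Since $\Sigma$ does not contain implications $X\rto x$ with $x\in X$ (our standing assumption), and more specifically the conclusion is disjoint from the premise, $b\neq a$, so $b\in\phi(\{a\})\setminus\{a\}$, which is therefore nonempty.

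The argument is essentially routine; the only thing to be careful about is the ($\Leftarrow$) direction, where one must justify that the first implication of the inference has premise exactly $\{a\}$ — this uses nonemptiness of premises — and that aggregation makes this implication unique with a nonempty conclusion. One could alternatively phrase ($\Leftarrow$) without Proposition \ref{inference} by invoking the canonical basis $\Sigma_C$ of Theorem \ref{DG}: the singleton $\{a\}$ is quasi-closed (every singleton of a standard system that is not closed is critical, being minimal), so $\Sigma_C$ contains $a\rto(\phi(\{a\})\setminus\{a\})$, and then Theorem \ref{DG}(2) together with aggregation yields a corresponding implication in $\Sigma$ with premise saturating to $\{a\}$, hence — after checking a standard system has no quasi-closed set strictly below the critical singleton — with premise $\{a\}$ itself. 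I would present the first, more elementary version, as it avoids the extra structural facts about critical singletons.
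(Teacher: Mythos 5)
Your proof is correct. The $(\Rightarrow)$ direction is identical to the paper's: $B\subseteq\phi(\{a\})$ because $\Sigma$ defines the system, and $a\notin B$ by the standing convention that conclusions are disjoint from premises. For $(\Leftarrow)$ you take a genuinely different, more elementary route: the paper observes that $\{a\}$ is a \emph{critical} set for the essential set $\phi(\{a\})$ (it is quasi-closed since $\phi(\emptyset)=\emptyset$ in a standard system, and minimal since its only proper subset is $\emptyset$), and then invokes Theorem \ref{DG}(2) to force an implication $U\rto V$ with $\sigma(U)=\{a\}$, hence $U=\{a\}$ — exactly the alternative you sketch at the end. Your primary argument instead applies Proposition \ref{inference} to the valid implication $a\rto b$ and notes that the first step of any $\Sigma$-inference must have premise $A_1\subseteq\{a\}$, hence $A_1=\{a\}$ by nonemptiness of premises (and the inference is nonempty since $b\neq a$). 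This buys you independence from the Duquenne--Guigues machinery and from the fact that singletons are critical, at the cost of a slightly longer argument; the paper's version is shorter given that Theorem \ref{DG} is already in place. One cosmetic remark: aggregation is not actually needed for existence in your argument — the implication $A_1\rto B_1$ itself already witnesses the claim, since all conclusions are nonempty by convention — it only gives uniqueness of the implication with premise $\{a\}$, which the lemma does not assert.
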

\begin{proof} If $\phi(\{a\})\setminus \{a\} \not = \emptyset$, then $\{a\}$ is a critical set for the essential set $\phi(\{a\})$.  Thus $(a\rto B)$ must be in  $\Sigma$, due to Theorem \ref{DG} (2). The converse statement is obvious, since $\Sigma$ defines the closure system $\<S,\phi\>$ and $B \subseteq \phi(\{a\})$. Besides, $B\not = \{a\}, \emptyset$, according to the definition of implication.
\end{proof}

Secondly, we would like to introduce the bases with properly distinguished binary and non-binary parts. Some bases may have implications in $\Sigma^{nb}$ that should truly belong to $\Sigma^b$. 

\begin{df}\label{regD}
Let $\< X, \phi\>$ be a closure system. An aggregated basis $\Sigma$ is called \emph{regular}, if
$\Sigma^{nb}$ does not have implications $\{a\}\cup F\rto D$ with $F\subseteq \phi(\{a\})$.
\end{df}

In other words, in a regular basis $\Sigma$, no implication $A\rto B\in \Sigma^{nb}$ is equivalent to $a\rto B$, for any $a \in A$.

It is easy to observe that both the aggregated the canonical basis and the $D$-basis of \cite{ANR11}  are regular by the definition. The concept of the regular basis is highly important in the key Definition \ref{delta} of section \ref{D-rel}.


\begin{prop}\label{reg prop}
Let $\< X, \phi\>$ be a standard closure system. If\/ $\Sigma$ is regular, then for every implication $a\rto B$ in $\Sigma^b$ it holds that $\phi(B)= \phi(\{a\})\setminus \{a\}$.
\end{prop}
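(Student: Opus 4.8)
The plan is to show the two inclusions $\phi(B) \subseteq \phi(\{a\})\setminus\{a\}$ and $\phi(\{a\})\setminus\{a\} \subseteq \phi(B)$ separately, using the regularity hypothesis only for the second. For the first inclusion, note that since $(a\rto B)\in\Sigma$ and $\Sigma$ defines $\<X,\phi\>$, we have $B\subseteq\phi(\{a\})$, hence $\phi(B)\subseteq\phi(\phi(\{a\}))=\phi(\{a\})$; and since $a\notin B$ by the convention that premise and conclusion of an implication are disjoint, together with the fact that $\<X,\phi\>$ is standard (so $\phi(\{a\})\setminus\{a\}=A_*$ is closed, in particular $a\notin\phi(B)$ unless... — here I need to be a little careful), we get $\phi(B)\subseteq\phi(\{a\})\setminus\{a\}$. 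The clean way to see $a\notin\phi(B)$: if $a\in\phi(B)$ then $\phi(\{a\})\subseteq\phi(B)\subseteq\phi(\{a\})$, so $\phi(B)=\phi(\{a\})$; but $B\subseteq A_*$ and $A_*$ is closed, so $\phi(B)\subseteq A_*=\phi(\{a\})\setminus\{a\}\subsetneq\phi(\{a\})$, a contradiction. So $\phi(B)\subseteq A_* = \phi(\{a\})\setminus\{a\}$.

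For the reverse inclusion, I would argue by contradiction: suppose $\phi(B)\subsetneq\phi(\{a\})\setminus\{a\}=A_*$, and pick $c\in A_*\setminus\phi(B)$. Since $\Sigma$ is a basis, $a\rto c$ holds in the system, so by Proposition~\ref{inference} there is a $\Sigma$-inference $\sigma_k=A_k\rto B_k$, $k\le m$, of $c$ from $\{a\}$. Consider the first implication $\sigma_j$ in this inference whose premise $A_j$ is not contained in $\phi(B)\cup\{a\}$ — such a $j$ exists because the union of all the $B_k$ together with $\{a\}$ eventually contains $c\notin\phi(B)$, so not everything produced lies in $\phi(B)$. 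For $k<j$ we have $A_k\subseteq\{a\}\cup B_1\cup\dots\cup B_{k-1}$, and by minimality of $j$ each $A_k\subseteq\phi(B)\cup\{a\}$ for $k<j$, hence (using $B_k\subseteq\phi(A_k)\subseteq\phi(\phi(B)\cup\{a\})=\phi(\{a\})$... ) one shows inductively $B_k\subseteq\phi(B)$ for $k<j$ — the point being that $A_k\subseteq\phi(B)\cup\{a\}$, and if $A_k$ actually equals a singleton that's fine, while if $|A_k|>1$ or $A_k=\{a'\}$ with $a'\ne a$ we get $B_k\subseteq\phi(A_k)\subseteq\phi(\phi(B))=\phi(B)$; the only troublesome case is $A_k=\{a\}$ itself, for which $B_k\subseteq\phi(\{a\})$, not obviously inside $\phi(B)$. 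This is exactly where regularity must enter.

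Here is how I expect regularity to resolve it, and this is the step I regard as the main obstacle. Look at $\sigma_j=A_j\rto B_j$. By construction $A_j\subseteq\{a\}\cup B_1\cup\dots\cup B_{j-1}\subseteq\{a\}\cup\phi(B)$, while $A_j\not\subseteq\phi(B)$, so $a\in A_j$ and $A_j=\{a\}\cup F$ with $F\subseteq\phi(B)\subseteq A_*=\phi(\{a\})\setminus\{a\}\subseteq\phi(\{a\})$. If $|A_j|>1$ this is precisely a non-binary implication $\{a\}\cup F\rto B_j$ with $F\subseteq\phi(\{a\})$, which regularity forbids; so $A_j=\{a\}$, i.e.\ $\sigma_j$ is the binary implication $a\rto B_j$, and then $B_j\subseteq\phi(B)$ would follow only if we already knew the claim — so instead I should choose the inference more carefully, or better: take $c\in A_*\setminus\phi(B)$ and directly apply Proposition~\ref{inference} to $B\rto c$? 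No — that implication need not hold. The correct fix is to note that since $a\notin\phi(B)$ (shown above) but the inference of $c$ from $\{a\}$ must use the element $a$ only through implications of $\Sigma$ with $a$ in the premise, and every such implication $A\rto D$ with $a\in A$ either is $a\rto D'$ (binary) — and among binary implications with premise $a$ there is, by the Lemma preceding Definition~\ref{regD}, essentially one relevant one, whose conclusion $\Sigma$-generates all of $A_*$ — or is non-binary with some other premise element $a'\in A$, $a'\ne a$, $a'\in\phi(B)$, forcing $D\subseteq\phi(B)$. So all elements of $A_*$ reachable from $\{a\}$ are reached via $a\rto B$ followed by implications from $\phi(B)$, which means $A_*\subseteq\phi(B)$, completing the proof. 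I will need to write this reachability argument cleanly, perhaps by induction on the length of the inference, and the delicate point throughout is ruling out non-binary implications with $a$ in the premise and the rest of the premise inside $\phi(\{a\})$ — which is exactly the content of regularity.
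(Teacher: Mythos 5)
Your overall strategy is the paper's: run a $\Sigma$-inference of $c\in A_*$ from $\{a\}$ and show by induction that everything produced stays inside $\phi(B)$, with regularity ruling out non-binary implications $\{a\}\cup F\rto D$ having $F\subseteq\phi(\{a\})$. The first inclusion $\phi(B)\subseteq A_*$ is fine (indeed $B\subseteq A_*$ and $A_*$ closed already gives it in one line). But there is a genuine gap exactly where you flag one: the case $A_j=\{a\}$. You write that $B_j\subseteq\phi(B)$ ``would follow only if we already knew the claim,'' and your attempted repair is circular --- the assertion that the conclusion of the binary implication with premise $a$ ``generates all of $A_*$'' \emph{is} the proposition being proved --- and also contains a non sequitur: a single premise element $a'\in\phi(B)$ does not force $D\subseteq\phi(B)$; you need the entire premise inside $\phi(B)$.

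The missing observation is that by Definition~\ref{regD} a regular basis is an \emph{aggregated} basis, so $\Sigma$ contains exactly one implication with premise $\{a\}$, namely $a\rto B$ itself. Hence any $\sigma_j$ in the inference with $A_j=\{a\}$ is literally $a\rto B$, and $B_j=B\subseteq\phi(B)$ with nothing left to prove. With that in hand the induction closes cleanly, just as in the paper: $A_1=\{a\}$ forces $\sigma_1=(a\rto B)$, so $B_1=B\subseteq\phi(B)$; for $i\geq 2$, either $a\notin A_i$, in which case $A_i\subseteq B_1\cup\dots\cup B_{i-1}\subseteq\phi(B)$ and so $B_i\subseteq\phi(A_i)\subseteq\phi(B)$, or $a\in A_i$, in which case regularity (for $|A_i|>1$) and aggregation (for $A_i=\{a\}$) dispose of the two subcases. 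There is no need for the ``first bad index $j$'' contradiction scaffolding; a direct induction along the inference suffices.
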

\begin{proof} Since $\< X, \phi\>$ is standard, $\phi(\{a\})\setminus \{a\}$ is closed. 
To simplify the notation, we will denote $A_*=\phi(\{a\})\setminus \{a\}$.

If $a \rto B$ is in $\Sigma$, then $B\subseteq \phi(\{a\})$ and $a \not \in B$, whence $B\subseteq A_*$.
Since $A_*$ is closed, this implies $\phi(B)\subseteq A_*$. We want to verify that $\phi(B)=A_*$.

Consider any $c \in A_*$.
Since $c \in \phi(\{a\})$, by Proposition \ref{inference}, there should be a sequence of implications $\sigma_1,\dots, \sigma_k \in \Sigma$, say $\sigma_i=A_i\rto B_i$, such that $A_1=\{a\}$, $c \in B_k$, and $A_i\subseteq A_1\cup B_1\cup\dots \cup B_{i-1}$ for $1<i\leq k$.   Since $\Sigma$ is aggregated, we have $B_1 \subseteq \Phi(B)$.  As $\Sigma$ is regular, $a\notin A_2,\dots, A_k$, whence by induction it follows that $B_i\subseteq \phi(B)$.  For $i=k$ this yields $c \in B_k \subseteq \phi(B)$, so that $c \in \phi(B)$, as desired.
\end{proof}

The next lemma shows that any basis can be modified to become regular, without increasing its size.

\begin{lem}\label{reg}
If $\Sigma$ is any aggregated basis of a standard closure system, then one can find a new basis $\Sigma_r$ that is regular, $|\Sigma_r|\leq |\Sigma|$ and $s_L(\Sigma_r)\leq s_L(\Sigma)$, $s_R(\Sigma_r)\leq s_R(\Sigma)$. 
\end{lem}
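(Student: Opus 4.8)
The plan is to remove the offending implications from $\Sigma^{nb}$ one at a time, folding their content into the binary part, and to check that none of the three size parameters goes up. Suppose $A \rto B$ is an implication in $\Sigma^{nb}$ that violates regularity, so there is some $a \in A$ with $A \setminus \{a\} =: F \subseteq \phi(\{a\})$; equivalently $A \subseteq \phi(\{a\})$. I would first observe that, under the closure system $\<S,\phi\>$, the implication $A \rto B$ is equivalent (over the rest of $\Sigma$) to $\{a\} \rto B$: indeed $F \subseteq \phi(\{a\})$ gives $A \subseteq \phi(\{a\})$, so $\phi(A) = \phi(\{a\})$, and hence $B \subseteq \phi(A) = \phi(\{a\})$, which says $\{a\} \rto B$ holds in the system; conversely $\{a\} \rto B$ together with the trivial containment $\{a\} \subseteq A$ yields $A \rto B$ by Proposition \ref{inference}. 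So replacing $A \rto B$ by $\{a\} \rto B$ (and, if an implication with premise $\{a\}$ is already present, aggregating the two conclusions into a single implication $\{a\} \rto B \cup B'$) produces a basis for the same system.

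Next I would track the size parameters. Passing from $A \rto B$ to $\{a\} \rto B$ strictly decreases $|A| \geq 2$ to $1$, so $s_L$ strictly drops (by $|A|-1 \geq 1$), while the right-hand side is unchanged, so $s_R$ is unchanged and $|\Sigma|$ is unchanged. If instead an implication $\{a\} \rto B'$ was already in $\Sigma^b$, then after aggregation we have one implication $\{a\} \rto (B \cup B')$ in place of two, so $|\Sigma|$ drops by one, $s_L$ drops (we lose $|A|$ from one premise and keep a single premise of size $1$), and $s_R$ does not increase since $|B \cup B'| \leq |B| + |B'|$. In every case $|\Sigma_r| \leq |\Sigma|$, $s_L(\Sigma_r) \leq s_L(\Sigma)$, $s_R(\Sigma_r) \leq s_R(\Sigma)$, and the closure system is preserved.

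Finally I would argue termination: each step either strictly decreases $|\Sigma|$, or keeps $|\Sigma|$ fixed while strictly decreasing $s_L$; since both quantities are non-negative integers, the process halts after finitely many steps, and the resulting basis $\Sigma_r$ has no regularity-violating implication in its non-binary part, i.e., it is regular. One point that needs a little care — and is the only real obstacle — is making sure the modification does not accidentally reintroduce a violation or disturb aggregation: after moving $A \rto B$ to premise $\{a\}$ and merging, the new binary implication is still of the form $\{a\} \rto (\cdot)$, which is by definition not in $\Sigma^{nb}$, so it cannot itself violate regularity; and every other implication is untouched, so the only way the count of bad implications changes is that it goes down by one. Hence the induction is clean. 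One should also note at the end that a regular aggregated basis is still aggregated (distinct premises), which the merging step was designed to maintain.
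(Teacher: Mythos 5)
Your argument is correct, but it is not the route the paper takes, so a comparison is worth recording. You promote the entire offending implication to the binary level: since $A\subseteq\phi(\{a\})$ forces $\phi(A)=\phi(\{a\})$, you replace $A\rto B$ wholesale by $\{a\}\rto B$ and then merge with any existing binary implication with premise $\{a\}$; your bookkeeping of $|\Sigma|$, $s_L$, $s_R$ is accurate in both cases (the only facts needed are $|A|\geq 2$ and $|B\cup B'|\leq|B|+|B'|$), and termination is immediate because each step strictly decreases the number of non-binary implications while touching nothing else. The paper instead first splits the conclusion $D$ of the bad implication $\{a\}\cup F\rto D$ into $D\setminus\phi(F)$ and $D\cap\phi(F)$, replacing it by the two implications $a\rto D\setminus\phi(F)$ and $F\rto D\cap\phi(F)$, and only then aggregates the binary piece with the implication $a\rto B$ that must already be present in $\Sigma^b$ (by the preceding lemma, since $\phi(\{a\})\setminus\{a\}\neq\emptyset$ here). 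The split leaves $s_L$ and $s_R$ unchanged and the aggregation then reduces $s_L$; the net effect on all three parameters is the same as in your version. What the paper's splitting buys is that the portion of the conclusion already entailed by $F$ alone stays attached to the premise $F$, so the non-binary part is perturbed as little as possible; what your version buys is a shorter argument and a cleaner termination measure. Two minor points you should make explicit if you write this up: the merged conclusion $B\cup B'$ still excludes $a$ (because premises and conclusions are disjoint by convention), and the replacement preserves the closure system because $\{a\}\rto B$ holds in $\langle S,\phi\rangle$ while $A\rto B$ follows from $\{a\}\rto B$ alone --- you do say both things, so the proof stands.
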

\begin{proof}
Let $\{a\}\cup F\rto D$ in $\Sigma$, where $F\subseteq \phi(\{a\})$. By Proposition \ref{inference}, $\Sigma^b$ should have an implication $a \rto B$, for some $B$. Consider the splitting of $D$: $D=(D\setminus \phi(F))\cup  (D\cap \phi(F))$. If both subsets in the union are non-empty, then one can replace this implication by
$a\rto D\setminus \phi(F)$ and $F \rto (D\cap \phi(F))$ without changing $s_L$ and $s_R$. If either of two subsets is empty, then the original implication can be replaced by one of the two new implications, correspondingly. Then $a\rto B$ can be aggregated with $a\rto D\setminus \phi(F)$, reducing $s_L$ and, possibly, $s_R$ as well. Observe that this procedure will not increase the number of implications.
\end{proof}

\begin{cor} Every optimum basis is regular.
\end{cor}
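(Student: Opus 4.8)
The plan is to argue by contradiction from optimality, reusing the single replacement step in the proof of Lemma~\ref{reg} but keeping careful track of the fact that, when the basis is genuinely non-regular, that step \emph{strictly} decreases the size. So I would assume that $\Sigma$ is an optimum basis which is not regular, and manufacture a basis of strictly smaller size for the same system, contradicting optimality.

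First I would dispose of a technical point so that Lemma~\ref{reg} is applicable: an optimum basis is automatically aggregated, since if two distinct implications $X\rto Y_1$ and $X\rto Y_2$ shared a premise $X$, replacing them by $X\rto Y_1\cup Y_2$ would leave the right side no larger while dropping $s_L$ by $|X|\geq 1$. Then, since $\Sigma$ fails to be regular, Definition~\ref{regD} supplies an implication $\{a\}\cup F\rto D$ in $\Sigma^{nb}$ with $F\subseteq\phi(\{a\})$; I may take $a\notin F$, and then $F\neq\0$ because the premise has size at least $2$. Since $\phi(\{a\})\setminus\{a\}\supseteq F\neq\0$, the binary part $\Sigma^b$ must contain an implication $a\rto B$ with $B\neq\0$ (as in the proof of Lemma~\ref{reg}, via Proposition~\ref{inference} and the lemma preceding Definition~\ref{regD}), and this is a different implication from $\{a\}\cup F\rto D$ because the premises have different sizes.

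Next I would run the replacement step from Lemma~\ref{reg} on this pair: split $D=(D\setminus\phi(F))\cup(D\cap\phi(F))$, replace $\{a\}\cup F\rto D$ by the nonempty ones among $a\rto D\setminus\phi(F)$ and $F\rto D\cap\phi(F)$, and then aggregate the first of these into $a\rto B$ to get $a\rto B\cup(D\setminus\phi(F))$; call the result $\Sigma'$. By the proof of Lemma~\ref{reg}, $\Sigma'$ is a basis for the same closure system. On the right side, $B$ and $D$ are replaced by $B\cup(D\setminus\phi(F))$ and $D\cap\phi(F)$, whose total cardinality is at most $|B|+|D|$, so $s_R(\Sigma')\leq s_R(\Sigma)$. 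On the left side, the two premises $\{a\}\cup F$ and $\{a\}$ of total size $|F|+2$ are replaced by premises of total size at most $|F|+1$, so $s_L(\Sigma')\leq s_L(\Sigma)-1$. Hence $s(\Sigma')<s(\Sigma)$, contradicting optimality; therefore $\Sigma$ is regular.

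I expect the main obstacle to be the case analysis that certifies strictness in the degenerate situations: when $D\subseteq\phi(F)$ no $a\rto\,\cdot$ summand is created, and the drop in $s_L$ comes solely from shrinking the bad premise $\{a\}\cup F$ down to $F$; when $D\cap\phi(F)=\0$ the split contributes nothing, but the aggregation absorbs an entire copy of the premise $\{a\}$. Both cases, along with the verification that each replacement preserves the family of closed sets, are already contained in the proof of Lemma~\ref{reg}, so the real content of the argument is just to observe that in every case $s_L$ strictly decreases while $s_R$ does not increase, which is all that optimality forbids.
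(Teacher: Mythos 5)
Your argument is correct and is exactly the intended one: the paper leaves the corollary as an immediate consequence of Lemma~\ref{reg}, whose proof already notes that the replacement-plus-aggregation step strictly reduces $s_L$ without increasing $s_R$, so a non-regular basis cannot be optimum. Your additional observations (that an optimum basis must be aggregated, and the check of strictness in the degenerate cases) are sound and just make explicit what the paper takes for granted.
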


We note that the procedure in Lemma \ref{reg} does modify the non-binary part of a given basis $\Sigma$.

\begin{exm}
Consider a closure system be given by the basis $\Sigma=\{a\rto b, ab\rto c, bc\rto d\}$. The second implication has the premise $ab\subseteq \phi(\{a\})$, and it follows from $a\rto c$. Hence, the regular basis obtained via procedure of Lemma \ref{reg} is $\Sigma_r=\{a\rto bc, bc\rto d\}$. We note that $|\Sigma_2|=2<3=|\Sigma|$, $s_L(\Sigma_r)=3<5=s_L(\Sigma)$, and $s_R(\Sigma_r)=s_R(\Sigma)=3$.
\end{exm}

The binary part $\Sigma^b$ of some standard basis will also be called \emph{regular}, if it satisfies the condition of Proposition \ref{reg prop}: for every implication $a\rto B$ in $\Sigma^b$ it holds that $\phi(B)= \phi(\{a\})\setminus \{a\}$. 

The next statement shows that the binary part of any basis can be replaced by any regular binary part without any modification to the non-binary part.

\begin{lem}\label{bin opt}
Let $\Sigma=\Sigma^b\cup \Sigma^{nb}$ be any basis of a standard closure system. One can replace any $(a\rto D)\in \Sigma^b$ by arbitrary $a\rto B$, with $\phi(B)=\phi(\{a\})\setminus \{a\}$, to obtain a new basis $\Sigma_*$ that preserves the non-binary part: $\Sigma_*^{nb}=\Sigma^{nb}$.
\end{lem}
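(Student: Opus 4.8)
The plan is to set $\Sigma_* = (\Sigma \setminus \{a\rto D\}) \cup \{a \rto B\}$ and prove that $\Sigma_*$ defines the same closure system as $\Sigma$; once that is done, $\Sigma_*^{nb} = \Sigma^{nb}$ is immediate, since $\Sigma$ and $\Sigma_*$ differ only in the one implication that has been exchanged, and that implication has a one-element premise in both cases, so it lies in the binary part. Writing $\Sigma_0 = \Sigma \setminus \{a\rto D\} = \Sigma_* \setminus \{a\rto B\}$, the two bases share every implication of $\Sigma_0$, so by Proposition~\ref{inference} it suffices to check that $a\rto B$ follows from $\Sigma$ and that $a\rto D$ follows from $\Sigma_*$.

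The first of these is routine: since $B \subseteq \phi(B) = \phi(\{a\})\setminus\{a\} \subseteq \phi(\{a\})$, each $b \in B$ lies in $\phi(\{a\})$, so $a \rto B$ holds in $\<S,\phi\>$ and hence follows from $\Sigma$. (The same inclusion shows $a \notin B$, so $a\rto B$ is a genuine implication, and $B\neq\0$ because $a\rto D\in\Sigma^b$ forces $\phi(\{a\})\setminus\{a\}\neq\0$.) For the converse I would use that $a\rto B \in \Sigma_*$: it is then enough to exhibit, for each $d \in D$, a $\Sigma_*$-inference of $d$ from $B$, since prepending $a \rto B$ to it produces a $\Sigma_*$-inference of $d$ from $\{a\}$; as $d\in D$ is arbitrary this shows $a\rto D$ follows from $\Sigma_*$.

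The crux — and the step I expect to be the only real obstacle — is producing that $\Sigma_*$-inference of $d$ from $B$. Because $D \subseteq \phi(\{a\})\setminus\{a\} = \phi(B)$, there is at least a $\Sigma$-inference $\sigma_1,\dots,\sigma_k$ of $d$ from $B$, say $\sigma_i = A_i \rto B_i$; the problem is that a priori it might use the discarded implication $a\rto D$. To rule this out I would set $Z = B \cup B_1 \cup \dots \cup B_k$ and argue, by an easy induction along the inference (using $A_i \subseteq B \cup B_1 \cup \dots \cup B_{i-1}$ together with isotonicity and idempotence of $\phi$), that $Z \subseteq \phi(B)$. Now $\phi(B) = \phi(\{a\})\setminus\{a\}$ is closed — this is precisely where standardness of the system is used — and it does not contain $a$; hence $a \notin Z$, so $a \notin A_i$ for every $i$, and therefore no $\sigma_i$ can be $a\rto D$. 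Thus every $\sigma_i$ lies in $\Sigma \setminus \{a\rto D\} = \Sigma_0 \subseteq \Sigma_*$, so $\sigma_1,\dots,\sigma_k$ is already a $\Sigma_*$-inference of $d$ from $B$, as required. This completes the verification that $\Sigma_*$ is a basis of the same system, and hence the proof.
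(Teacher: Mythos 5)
Your argument is correct, but it takes a different route from the paper's. The paper reads the lemma as a \emph{simultaneous} replacement of the entire binary part (this is how it is later invoked in Corollary~\ref{sr}, where the whole binary part of an arbitrary basis is swapped for that of an optimum one), and therefore must confront the possibility that the $\Sigma$-inference of $d$ from $B$ uses binary implications $b\rto F$ that are themselves being replaced. It handles this by induction on the height of $a$ in $(S,\geq_\phi)$: every such $b$ lies in $A_*=\phi(\{a\})\setminus\{a\}$, hence has strictly smaller height, so $b\rto F$ already follows from $\Sigma_*$ by the inductive hypothesis. You instead replace a single implication and observe that the whole inference of $d$ from $B$ lives inside the closed set $A_*$, which excludes $a$, so the one discarded implication $a\rto D$ can never be invoked; no induction is needed. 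Your localization argument ($Z\subseteq\phi(B)=A_*$, hence $a\notin A_i$ for all $i$) is sound, and it is essentially the same closedness-of-$A_*$ observation that underlies Proposition~\ref{reg prop}. The only thing worth adding is one sentence noting that the single-step version iterates: each replacement yields a basis of the same standard closure system with the same non-binary part, so applying it once per binary implication recovers the simultaneous statement that the later applications actually use. With that remark your proof is, if anything, more elementary than the paper's, at the cost of being phrased for one implication at a time.
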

\begin{proof}
The argument proceeds by induction on the height of the element $a$ in the partially ordered set $(S,\geq_\phi)$. By the \emph{height} of an element $x$ in a partial ordered set $(S,\geq)$, we mean the length $k$ of the longest chain $x = x_0>x_1 \dots > x_k$ in $(S,\geq)$.

The argument for $k=1$ and the induction step are similar, so we combine both cases. Assume that $a\rto B$ is in $\Sigma_*^b$ and that $a \rto D$ is in $\Sigma^b$, where either $a$ is of height $1$, or of height $n+1$, and it is already proved that any implication $(b\rto F)\in \Sigma^b$ with height at most $n$ follows from $\Sigma_*$. If $a$ is of the height $1$, then every $c \in \phi(\{a\})$ is an \emph{atom}, i.e., $\{c\}=\phi(\{c\})$. In particular, $c$ cannot be a premise of any binary implication.

We want to show that $a\rto D$ follows from $\Sigma_*$. Let use again the notation $A_*=\phi(\{a\})\setminus \{a\}$.

Assume there exists $d \in D\setminus B$. Since $d \in A_*=\phi(B)$, the implication $B\rto d$ should follow from $\Sigma$. Then one can find a sequence of implications $\sigma_1,\dots, \sigma_k \in \Sigma$, $\sigma_i=A_i\rto B_i$, such that $B\subseteq A_1$, $d \in B_k$, and $A_i\subseteq A_1\cup B_1\cup \dots \cup B_{i-1}\subseteq A_*$. If $a$ is of height $1$, then none of these implications is binary, since all elements in $A_*$ are atoms. If $a$ is of height $n+1$, then all binary implications are of the form $(b \rto F)\in \Sigma^b$, where $b \in A_*$, whence $b$ is of height $\leq n$, and such implications follow from $\Sigma_*$ by the inductive hypothesis. Place $\sigma_0=a\rto B$ prior to the first implication in inference  $\sigma_1,\dots, \sigma_k$, and replace each binary implication among $\sigma_i$ by its inference from $\Sigma_*$. This will give the inference of  $a\rto d$ from $\Sigma_*$.   
\end{proof}

We note that similar statement is proved in Corollary 5.7 of E.~Boros et al. \cite{B10}, where an exclusive set of implications may, in particular, be any subset of the basis, whose left and right sides are contained in some $\phi(Y)$. In our case, it is $\phi(\{a\})$. 

The consequence of two lemmas is the following statement about the right-side optimization of the binary part.

\begin{thm}\label{rs-bin}
Let $\Sigma_C$ be the canonical basis of a standard closure system $\<S, \phi\>$, and let $x_C\rto Y_C$ be any binary implication from $\Sigma_C$. Every (regular right-side) optimum basis $\Sigma$ will contain an implication $x_C\rto B$, where $|B|=b_C=\min \{|Y|: \phi(Y)=\phi(\{x_C\})\setminus\{x_C\}\}$.
\end{thm}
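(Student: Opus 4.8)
The plan is to obtain the statement by composing three earlier results. A one-element critical set forces an implication with that premise into every basis (Theorem~\ref{DG}(2)); for a \emph{regular} basis the conclusion $B$ of such a binary implication must close up to $\phi(\{x_C\})\setminus\{x_C\}$ (Proposition~\ref{reg prop}), which already yields one inequality for $|B|$; and if $|B|$ were strictly larger than the minimum $b_C$, one could shrink it by Lemma~\ref{bin opt} without disturbing the non-binary part, contradicting right-side optimality.

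First I would fix the setup. Since $x_C\rto Y_C$ is a binary implication of the canonical basis $\Sigma_C=\{C\rto(\phi(C)\setminus C):C\in\mathcal{C}\}$, its premise is a one-element critical set, which I write as $\{x_C\}$, and $\phi(\{x_C\})\setminus\{x_C\}=Y_C\neq\emptyset$. Let $\Sigma$ be a regular right-side optimum basis; this covers every optimum basis, since optimum bases are regular (the corollary following Lemma~\ref{reg}) and right-side optimum (Corollary~\ref{lr}). Applying Theorem~\ref{DG}(2) to the critical set $\{x_C\}$ produces an implication $(U\rto V)\in\Sigma$ with $\sigma(U)=\{x_C\}$; since $U\subseteq\sigma(U)$ and $U$ is non-empty, $U=\{x_C\}$. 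Hence $\Sigma$ contains an implication $x_C\rto B$, and as $\Sigma$ is aggregated this is its unique implication with premise $\{x_C\}$.

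It then remains to show $|B|=b_C$. Regularity and Proposition~\ref{reg prop} give $\phi(B)=\phi(\{x_C\})\setminus\{x_C\}$, so $|B|\geq b_C$ by the definition of $b_C$. Suppose $|B|>b_C$ and choose $B'$ with $\phi(B')=\phi(\{x_C\})\setminus\{x_C\}$ and $|B'|=b_C$. By Lemma~\ref{bin opt} we may replace $x_C\rto B$ by $x_C\rto B'$ to obtain a basis $\Sigma_*$ of the same closure system with $\Sigma_*^{nb}=\Sigma^{nb}$ and all other binary implications unchanged; then $s_R(\Sigma_*)=s_R(\Sigma)-|B|+|B'|<s_R(\Sigma)$ (and likewise $s(\Sigma_*)<s(\Sigma)$), contradicting right-side optimality (equivalently, optimality) of $\Sigma$. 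So $|B|=b_C$, as claimed. The argument is essentially a straight assembly of prior results, with no deep obstacle; the only points needing care are the verification of the hypotheses of Theorem~\ref{DG}(2) and of Lemma~\ref{bin opt} in this situation — that $\{x_C\}$ is genuinely a critical set and that the replacement leaves $\Sigma^{nb}$ intact — both of which are immediate here.
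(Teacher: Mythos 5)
Your proposal is correct and follows essentially the same route as the paper's proof: identify the forced binary implication $x_C\rto B$ with $\phi(B)=\phi(\{x_C\})\setminus\{x_C\}$, then use Lemma~\ref{bin opt} to swap in a smaller conclusion and contradict right-side optimality. You merely make explicit (via Theorem~\ref{DG}(2) and Proposition~\ref{reg prop}) two steps the paper leaves implicit.
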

\begin{proof}
Suppose we are given a (regular right-side) optimum basis $\Sigma$. 
For each $(x_C\rto Y_C) \in \Sigma_C^b$, there should be $(x_C \rto D) \in \Sigma$; besides, $\phi(D)=\phi(\{x_C\})\setminus\{x_C\}$. Suppose there exists $B$ such that $\phi(B)=\phi(\{x_C\})\setminus\{x_C\}$ and $|B|<|D|$. Then, by Lemma \ref{bin opt}, it would be possible to replace $x_C\rto D$ by $x_C\rto B$, without changing other implications of the basis. This would reduce  $s_R$, a contradiction with the right-side optimality of $\Sigma$. Hence, $|D|$ is minimal among subsets whose closure is  $\phi(\{x_C\})\setminus\{x_C\}$.
\end{proof}

We will call a basis $B$-\emph{optimum}, if the size of its binary part is minimum among all possible \emph{regular} bases of the closure system $\<S, \phi\>$.

\begin{cor}\label{Bopt}
Every optimum basis is $B$-optimum. Moreover, every $B$-optimum basis $\Sigma$ has the size of its binary part $s(\Sigma^b)=|\Sigma_C^b|+\Sigma_{|C|=1}b_C$.
\end{cor}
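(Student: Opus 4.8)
The plan is to compute $s(\Sigma^b)$ for an arbitrary regular basis by separating the contribution of the (singleton) premises from that of the conclusions, to bound each piece, to show both bounds can be met simultaneously, and then to read off from Theorem~\ref{rs-bin} that optimum bases already meet them.

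First I would describe the binary part of a regular basis $\Sigma$. Being aggregated, $\Sigma$ has at most one implication with any given premise; by the first lemma of Section~\ref{bin} it has at least one implication $a\rto B_a$ whenever $\phi(\{a\})\setminus\{a\}\neq\0$; and it has no binary implication with premise $a$ when $\phi(\{a\})\setminus\{a\}=\0$, since a conclusion $B_a$ is a nonempty subset of $\phi(\{a\})\setminus\{a\}$. As $\{a\in S:\phi(\{a\})\setminus\{a\}\neq\0\}$ is exactly the set of critical sets of size one, this gives $|\Sigma^b|=|\Sigma^b_C|$; hence the premises contribute exactly $|\Sigma^b_C|$ to $s(\Sigma^b)$, for every regular basis.

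For the conclusions, Proposition~\ref{reg prop} gives $\phi(B_a)=\phi(\{a\})\setminus\{a\}$, so $|B_a|\geq b_{\{a\}}$ by the definition of $b_C$, and therefore
\[
s(\Sigma^b)=|\Sigma^b_C|+\sum_a|B_a|\ \geq\ |\Sigma^b_C|+\sum_{|C|=1}b_C
\]
for every regular $\Sigma$. This lower bound is attained: applying Lemma~\ref{bin opt} to the (regular) canonical basis $\Sigma_C$, replace each $x_C\rto Y_C$ in $\Sigma^b_C$ by $x_C\rto B$ with $\phi(B)=\phi(\{x_C\})\setminus\{x_C\}$ and $|B|=b_C$. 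Hence $\min\{s(\Sigma^b):\Sigma\ \text{regular}\}=|\Sigma^b_C|+\sum_{|C|=1}b_C$, which is the value in the statement; this yields the second assertion.

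Finally, every optimum basis is regular and, by Corollary~\ref{lr}, right-side optimum, so Theorem~\ref{rs-bin} applies and forces $|B_a|=b_{\{a\}}$ for every critical singleton; thus its binary part has size $|\Sigma^b_C|+\sum_{|C|=1}b_C$, the minimum, and the basis is $B$-optimum. I anticipate no serious obstacle: the only point needing care is the bookkeeping in the second paragraph — verifying that the binary part of every regular basis is indexed by precisely the critical singletons, with none missing and none spurious — since the rest is an immediate consequence of Proposition~\ref{reg prop}, Lemma~\ref{bin opt}, Corollary~\ref{lr}, and Theorem~\ref{rs-bin}.
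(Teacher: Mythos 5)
Your proposal is correct and takes essentially the same route as the paper: the left sides contribute $|\Sigma_C^b|$ because every aggregated basis has exactly one singleton premise per critical singleton, and Theorem~\ref{rs-bin} (via Proposition~\ref{reg prop} and Lemma~\ref{bin opt}) forces each conclusion to have size $b_C$ in an optimum basis. You merely spell out the bookkeeping and the attainability of the lower bound that the paper's two-sentence proof leaves implicit.
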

\begin{proof}
According to Theorem \ref{rs-bin}, every optimum basis has the smallest right size of its binary part, among the same measurement of all other regular bases. The premises in the binary parts are just singletons, so the total size of the premises in every aggregated basis $\Sigma^b$ is $|\Sigma^b|$.  
\end{proof}

We can point some easy computable lower bound  for parameters $b_C$. Recall that by an \emph{extreme point} of a closed set $X$ one calls an element $x \in X$ such that $x \not \in \phi(X\setminus \{x\})$. The set of extreme points of $X$ is denoted by $Ex(X)$.

\begin{cor}\label{Ex} Let $x_C\rto B$ be an implication from any optimum basis of a closure system  $\<S, \phi\>$, and let $X_*$ denote $\phi(\{x_C\})\setminus \{x_C\}$. Then $Ex(X_*) \subseteq B$. In particular, $|B|=b_C \geq |Ex(X_*)|$.
\end{cor}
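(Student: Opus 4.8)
The plan is to prove that for any implication $x_C\rto B$ in an optimum basis, the extreme points of $X_*=\phi(\{x_C\})\setminus\{x_C\}$ must all appear in $B$. The key observation is that an extreme point $x\in Ex(X_*)$ is, by definition, not in $\phi(X_*\setminus\{x\})$, so it cannot be produced from the rest of $X_*$ by any inference; hence it must be supplied directly.

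First I would recall that, by Proposition~\ref{reg prop} (and the fact that every optimum basis is regular, by the Corollary following Lemma~\ref{reg}), the implication $x_C\rto B$ satisfies $\phi(B)=X_*$. In particular $B\subseteq X_*$, since $x_C\notin B$ and $B\subseteq\phi(\{x_C\})$. Now suppose, for contradiction, that some extreme point $x\in Ex(X_*)$ is not in $B$. Then $B\subseteq X_*\setminus\{x\}$, so $\phi(B)\subseteq\phi(X_*\setminus\{x\})$. But $x\notin\phi(X_*\setminus\{x\})$ by the definition of extreme point, so $x\notin\phi(B)$, contradicting $\phi(B)=X_*\ni x$. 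This gives $Ex(X_*)\subseteq B$ immediately, and therefore $|B|=b_C\geq|Ex(X_*)|$.

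I do not anticipate a serious obstacle here; the argument is a one-line consequence of the definitions once regularity is invoked to pin down $\phi(B)=X_*$. The only point requiring a little care is justifying that we may assume the optimum basis is regular (so that Proposition~\ref{reg prop} applies to its binary implication $x_C\rto B$) and that such a binary implication with premise $x_C$ indeed exists — but the first follows from the Corollary to Lemma~\ref{reg}, and the second from Theorem~\ref{DG}(2) since $\{x_C\}$ is a critical set whenever $X_*\neq\emptyset$ (and if $X_*=\emptyset$ the statement is vacuous). Alternatively, one can bypass regularity altogether: any implication $x_C\rto B$ of the basis forces $B\subseteq\phi(\{x_C\})$ and $x_C\notin B$, hence $B\subseteq X_*$; and since the basis defines the closure system, $X_*=\phi(\{x_C\})\setminus\{x_C\}\subseteq\phi(B\cup(\text{whatever is inferred}))$, so in particular every extreme point of $X_*$, being underivable from $X_*\setminus\{x\}$, must lie in $B$. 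I would present the regular-basis version as the cleaner route and note the bound $b_C\geq|Ex(X_*)|$ as the stated consequence.
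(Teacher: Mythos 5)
Your argument is correct and is essentially identical to the paper's proof: both invoke regularity of an optimum basis to get $\phi(B)=X_*$, then derive the contradiction $y\notin\phi(B)\subseteq\phi(X_*\setminus\{y\})$ for a supposed missing extreme point $y$. The extra care you take about the existence of the binary implication and the vacuous case $X_*=\emptyset$ is fine but not needed beyond what the paper already assumes.
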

\begin{proof} Since an optimum basis is regular, $\phi(B)=X_*$. Suppose $y \in Ex(X_*)\setminus B$. Then $B\subseteq X_*\setminus \{y\}$, whence $\phi(B)\subseteq \phi(X_*\setminus \{y\})$ and $y \not \in \phi(B)$, a contradiction. Therefore, $Ex(X_*) \subseteq B$.
\end{proof}

In general, it is not true that $\phi(Ex(Y))=Y$, for any $Y\subseteq S$. Thus, it is possible that $b_C > |Ex(X_*)|$. It is easy to observe that this lower bound is attained in closure systems called \emph{convex geometries}, so that the optimum binary part is tractable in such closure systems. This is treated with more detail in K.~Adaricheva \cite{A12}.
\vspace{0.2cm}

One natural choice for the binary implications in a basis $\Sigma$ is the \emph{cover relation} $\succ_\phi$ of $\geq_\phi$. (By the definition, $a\succ_\phi b$ means that $a\geq_\phi b, a\not = b$ and $a\geq_\phi c\geq_\phi b$ implies $c=a$ or $c=b$.) Namely, $a\rto A_\succ$ is included into $\Sigma^b$, if $A_\succ =\{b \in \phi(\{a\}): a\succ_\phi b\}$.
It is straightforward to verify that $\phi(A_\succ )= \phi(\{a\})\setminus\{a\}$. Hence, such a binary part can be included into any basis, irrelevant to its non-binary part. It is algorithmically easy to compute $A_\succ$, see \cite[Proposition 16]{ANR11}. Also, the advantage of this choice is that every binary implication $a\rto b$ that follows from $\Sigma$, follows from binary part only. Of course, it is by no means guaranteed that $A_\succ$ has the minimal cardinality required in an optimum basis. It can even be reducible, i.e., some proper subset of $A_\succ$ can still have the same closure as $A_\succ$. 

\begin{exm}\label{cover}
Consider the closure system given by the canonical basis $\Sigma_C=\{a\rto bcd, bcdy\rto a,bc\rto d\}$. Apparently, the only binary implication $a\rto bcd$ is presented by $\succ_\phi$, i.e., $A_\succ=\{b,c,d\}$. On the other hand, we can reduce this implication to $a\rto bc$, since $\phi(\{b,c\})=\phi(\{a\})\setminus\{a\}$.
\end{exm}

We will consider in section \ref{Kbas SD} another form of the basis, for the  $UC$-closure systems, whose binary part will be non-redundant.

\section{K-basis in general closure systems}\label{kbas}

As follows from the results of sections \ref{min-opt} and \ref{bin}, an optimum basis is the most desirable among all bases of the closure system, since it implies other forms of minimality.

On the other hand, finding an optimum basis is generally a computationally hard problem. In this section we describe a new notion of a $K$-basis which can be computed from the canonical basis in polynomial time and provides reduction in size, while maintaining the minimum number of implications. 

Let $\Sigma_C=\{C\rto Y_C: C \in \mathcal{C}\}$ be a canonical basis. We will call any basis $\Sigma^*=\{X^*_C\rto Y^*_C: C \in \mathcal{C}\}$
a \emph{refinement} of the canonical basis if
$X^*_C\subseteq C$, $Y^*_C\subseteq Y_C$, and $\phi(X^*_C)=\phi(C)$ for all $C \in \mathcal C$. 
Every refinement is a minimum basis, while $s(\Sigma^*)\leq s(\Sigma_C)$.

According to Theorem \ref{DG} (2) and Theorem \ref{W} (II), every optimum basis is a refinement of the canonical. Besides, for every critical set $C$, an optimum basis would have an implication $X^*\rto Y^*$ with $|X^*|=k_C$, the minimum cardinality of a subset $X\subseteq C$ for which $\phi(X)=\phi(C)$ (equivalently, $\sigma(X)=C$). Such set $X$ is called a \emph{minimal generator} for the critical set $C$.

The $K$-basis is built on the idea of minimizing the left side of implications of the canonical basis with respect to all special \emph{order ideals} contained in critical sets. Thus, the left sides of the $K$-basis are not necessarily of the smallest cardinality. On the other hand, the algorithm producing a $K$-basis is fast and easy.

We observe that, according to the definition of a critical set, every $C$ is an $\geq_\phi$-order ideal, i.e., if $a \in C$ and $a\geq_\phi b$, then $b\in C$. Among all order ideals $Y\subseteq C$ with the property $\phi(Y)=\phi(C)$, we can find a minimal one with respect to containment. Finally, for every order ideal $Y$, it is clear that $\phi(\op{max}(Y))=\phi(Y)$, where $\op{max}(Y)$ is the subset of $\geq_\phi$-maximal elements of $Y$.

We will call a set $X^*$ a \emph{minimal order generator} of the critical set $C$ if $X^*=\op{max}(X)$ for some minimal (with respect to containment) order ideal $X \subseteq C$ such that $\phi(X)=\phi(C)$.

\begin{prop}\label{min gen} Let $C$ be a critical set for the closure system $\< S, \phi \>$. Build a sequence $X_0=C, X_1, \dots, X_n$ of subsets of $C$ as follows: (1) $X_{k+1}=X_k\setminus \{x_k\}$, where $x_k$ is any $\geq_\phi$-maximal element of $X_k$; (2) $\phi(X_k)=\phi(C)$; (3) none of the maximal elements of $X_n$ can be removed to obtain a set with the same closure as $C$.  Then $C_K:=\op{max} (X_n)$, the set of $\geq_\phi$-maximal elements of $X_n$, is a minimal order generator, and every minimal order generator may be obtained via such a sequence.
\end{prop}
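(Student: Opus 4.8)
The plan is to isolate a single invariant of the construction and then settle the two halves of the assertion separately: that every run of the procedure outputs a minimal order generator, and that every minimal order generator arises from some run. The invariant is that each $X_k$ is a $\geq_\phi$-order ideal, $X_k\subseteq C$, with $\phi(X_k)=\phi(C)$. Since $C$ is itself a $\geq_\phi$-order ideal (as observed for critical sets just before the statement) and deleting a $\geq_\phi$-maximal element of an order ideal again yields an order ideal, the order-ideal part follows by induction on $k$ using clause (1); the equality $\phi(X_k)=\phi(C)$ is clause (2); and termination is immediate since $|X_{k+1}|<|X_k|$.

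For the forward direction it then remains to show that $X_n$ is minimal, with respect to containment, among order ideals $X\subseteq C$ with $\phi(X)=\phi(C)$; once this is done, $C_K=\max(X_n)$ is a minimal order generator by definition. Suppose $Y\subsetneq X_n$ is an order ideal with $\phi(Y)=\phi(C)$. In a finite order ideal every element lies below some maximal element, so if $Y$ contained all $\geq_\phi$-maximal elements of $X_n$ then downward closure would force $Y\supseteq X_n$; hence there is a $\geq_\phi$-maximal element $z$ of $X_n$ with $z\notin Y$. Then $Y\subseteq X_n\setminus\{z\}\subseteq X_n$, and isotonicity and idempotence squeeze $\phi(C)=\phi(Y)\subseteq\phi(X_n\setminus\{z\})\subseteq\phi(X_n)=\phi(C)$; so the maximal element $z$ of $X_n$ could have been removed without altering the closure, contradicting the stopping clause (3). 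Hence $X_n$ is minimal.

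For the converse, let $X^{*}=\max(X)$ be an arbitrary minimal order generator, where $X\subseteq C$ is a containment-minimal order ideal with $\phi(X)=\phi(C)$; I would exhibit a run of the procedure ending at $X$. The key point is that $C\setminus X$ is upward closed inside $C$: if $a\in C\setminus X$ and $b\in C$ with $b\geq_\phi a$, then $b\notin X$, for otherwise the order ideal $X$ would contain $a$. Consequently one can list $C\setminus X=\{x_0,\dots,x_{n-1}\}$ so that the enumeration refines $\geq_\phi$ with larger elements first, and put $X_k=C\setminus\{x_0,\dots,x_{k-1}\}=X\cup\{x_k,\dots,x_{n-1}\}$. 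Then each $x_k$ is $\geq_\phi$-maximal in $X_k$: an element of $X_k$ strictly $\geq_\phi$-above $x_k$ cannot belong to $X$ (order-ideal property) and cannot belong to $\{x_{k+1},\dots,x_{n-1}\}$ (each such element comes after $x_k$ in a $\geq_\phi$-refining enumeration). Since $X\subseteq X_k\subseteq C$, isotonicity gives $\phi(X_k)=\phi(C)$, so clauses (1) and (2) hold along the whole sequence, whose terminal set is $X_n=X$; and $X_n=X$ satisfies clause (3), because if some $m\in\max(X)$ could be deleted while preserving the closure, then $X\setminus\{m\}$ would be a strictly smaller order ideal with closure $\phi(C)$, contradicting minimality of $X$. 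Thus this is a valid run of the procedure, with output $\max(X_n)=X^{*}$.

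The main obstacle I anticipate is the bookkeeping in the converse: one must delete the elements of $C\setminus X$ in an order ensuring each deleted element is genuinely $\geq_\phi$-maximal in the current set $X_k$, not merely in $C\setminus X$, and this is exactly where the order-ideal structure of both $C$ and $X$ is needed. By contrast, the forward direction rests only on the elementary facts that a proper sub-order-ideal of a finite order ideal omits one of its maximal elements and that removing a maximal element of an order ideal leaves an order ideal, together with the monotonicity squeeze on $\phi$, all of which are routine.
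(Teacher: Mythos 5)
Your proof is correct and follows essentially the same approach as the paper, whose own proof is only a one-sentence sketch asserting that the procedure steps through order ideals with unchanged closure and can reach any containment-minimal one. Your write-up supplies the details the paper leaves implicit (the order-ideal invariant, the use of clause (3) for minimality, and the linear-extension enumeration of $C\setminus X$ for the converse), and all of these steps check out.
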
 
\begin{proof} 
One can reach any minimal order ideal $X_n$ contained in $C$ and with $\phi(X_n)=\phi(C)$, through the series of steps, when a maximal element of an order ideal is removed, producing another order ideal with the same closure, which is exactly the procedure described in the proposition. 
\end{proof}

To align the notion of a minimal order generator with existing lattice terminology we include the following tautological statement.

\begin{prop}\label{12}
Every minimal order generator $C_K\subseteq C$ is a $\ll$-minimal join representation of the element $\phi(C)$ in the closure lattice of $\< S, \phi \>$.
\end{prop}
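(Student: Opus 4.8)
The plan is to unpack the definitions on both sides and check that they match almost verbatim, using the dictionary between the closure system $\langle S,\phi\rangle$ and its closure lattice $L=\op{Cl}(S,\phi)$, where the ground set $S$ is identified with $\Ji(L)$ and $\phi(Y)=[0,\bigvee Y]\cap \Ji(L)$. Under this identification, a subset $Y\subseteq S$ is a \emph{join representation} of the lattice element $u=\bigvee C = \bigvee\phi(C)$ precisely when $\bigvee Y = u$, i.e.\ precisely when $\phi(Y)=\phi(C)$; and the refinement preorder $Y\ll X$ on subsets of $\Ji(L)$ from the preliminaries (for every $y\in Y$ there is $x\in X$ with $y\leq x$) is the same relation that governs $\ll$-minimality of join representations.

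First I would fix $C\in\mathcal C$ and set $u=\bigvee C$ in $L$. Let $C_K=\op{max}(X_n)$ be a minimal order generator as produced by Proposition~\ref{min gen}, so $X_n\subseteq C$ is an order ideal with $\phi(X_n)=\phi(C)$, minimal such, and $C_K$ is its set of $\geq_\phi$-maximal elements with $\phi(C_K)=\phi(X_n)=\phi(C)$. Translating: $\bigvee C_K = u$, so $C_K$ is indeed a join representation of $u$ in $L$ consisting of join irreducibles. It remains to show it is $\ll$-minimal, i.e.\ that there is no join representation $Z\subseteq \Ji(L)$ of $u$ with $Z\ll C_K$ and $Z\neq C_K$ (more precisely, with $C_K\not\subseteq Z$, matching the ``minimal cover'' phrasing in the preliminaries; I would state the conclusion in whichever of the two standard equivalent forms the paper uses).

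Next I would argue the minimality by contradiction. Suppose $Z\ll C_K$ is a join representation of $u$ with $C_K\not\subseteq Z$. Pass to the order ideal $\downarrow\! Z\cap C$ it generates inside $C$: since every $z\in Z$ lies below some element of $C_K\subseteq C$ and $C$ is a $\geq_\phi$-order ideal, $\downarrow\! Z \cap \Ji(L)\subseteq C$. This down-set has the same join as $Z$, hence closure $\phi(C)$, and because some element of $C_K$ is not below any element of $Z$ (that is what $C_K\not\subseteq Z$ together with $C_K$ being an antichain gives, after a short argument), the down-set is strictly contained in $X_n=\downarrow_C\! C_K$. This contradicts the minimality of the order ideal $X_n$ among order ideals of $C$ with closure $\phi(C)$. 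Hence $C_K$ is $\ll$-minimal.

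The main obstacle, such as it is, is purely bookkeeping: being careful that ``minimal order ideal $X_n$'' and ``$\ll$-minimal join representation'' really are two faces of the same minimality condition, and in particular that replacing a candidate $Z$ by the order ideal it generates does not lose the property $\bigvee(\downarrow Z\cap\Ji(L))=u$ — which is immediate since adding elements below existing ones does not change the join — and does not accidentally recover all of $C_K$. Because the proposition is explicitly flagged as ``tautological,'' I would keep the write-up short, essentially presenting the translation of terms and the one-line ideal/representation correspondence, and note that the converse direction (every $\ll$-minimal join representation of $\phi(C)$ contained in $C$ arises as a minimal order generator) follows by running the same dictionary backwards together with the ``every minimal order generator may be obtained via such a sequence'' clause of Proposition~\ref{min gen}.
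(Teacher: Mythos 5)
Your proposal is correct. The paper offers no proof of Proposition~\ref{12} at all --- it is explicitly flagged as ``tautological'' --- so your write-up simply supplies the routine verification the authors leave implicit, and it does so soundly: the identification $\bigvee Y=\bigvee C$ iff $\phi(Y)=\phi(C)$, the fact that $X_n=\Id_\phi(C_K)$, and the key observation that a refinement $Z\ll C_K$ with $\bigvee Z=\phi(C)$ and $C_K\not\subseteq Z$ generates an order ideal strictly contained in $X_n$ with the same closure (using that $C_K$ is an antichain), contradicting the minimality of $X_n$, is exactly the dictionary between ``minimal order ideal'' and ``$\ll$-minimal join representation'' that the proposition asserts.
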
 

A critical set might have several minimal order generators, and the procedure described in Proposition \ref{min gen} may lead to any of them, depending on the order in which the maximal elements are removed. See Example \ref{2Kbases} for an illustration. 

\begin{exm}
Not every minimal generator for a critical set $C$ will be simultaneously a minimal order generator.  
Both are sets satisfying $\phi(X)=\phi(C)$, but the former type is minimal with respect to $\subseteq$, while the latter is minimal with respect to $\ll$.  A concrete example is the lattice in Example \ref{B4double}.  
\end{exm}




\begin{df}\label{Kbs}
A set of implications $\Sigma_K$ is called a $K$-\emph{basis} if it is obtained from the canonical basis $\Sigma_C$ by replacing each implication $(C\rto Y_C)\in\Sigma_C$ by $C_K\rto Y_K$, where $C_K$ is a minimal order generator of $C$, and $Y_K=\op{max} (Y_C)$.
\end{df}

We summarize easy facts about $K$-bases.
\begin{lem}
Let $\Sigma_K$ be a $K$-basis for $\langle S,\phi \rangle$.
\begin{itemize} 
\item[(1)] $\Sigma_K^b$ represents the cover relation $\succ_\phi$ of $\geq_\phi$; 
\item[(2)] $\Sigma_K$ is a basis of $\< S, \phi \>$;
\item[(3)] $\Sigma_K$ is a minimum basis and $s(\Sigma_K)\leq s(\Sigma_C)$.
\end{itemize}
\end{lem}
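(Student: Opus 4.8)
The plan is to verify the three items in order, drawing on the structural facts about $K$-bases established in Definition~\ref{Kbs}, Proposition~\ref{min gen}, and the basic properties of minimal order generators, together with Theorem~\ref{DG} on the canonical basis.

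For item~(1), I would argue that a binary implication occurs in $\Sigma_K$ exactly when its associated critical set is a singleton $\{a\}$ with $\phi(\{a\})\setminus\{a\}\neq\emptyset$. Indeed, the only critical sets $C$ yielding an implication with $|C_K|=1$ are those singletons, since $C_K$ is a minimal order generator of $C$ and a nonsingleton order ideal that is minimal with respect to containment and has full closure cannot reduce to a single element unless $C$ itself is that element. For such a singleton $\{a\}$, Definition~\ref{Kbs} gives the implication $a\rto Y_K$ with $Y_K=\op{max}(Y_C)=\op{max}(\phi(\{a\})\setminus\{a\})$. It remains to observe that $\op{max}(\phi(\{a\})\setminus\{a\})$, with respect to $\geq_\phi$, is precisely $A_\succ=\{b:a\succ_\phi b\}$: an element $b<_\phi a$ lies in $\phi(\{a\})\setminus\{a\}$, and it is $\geq_\phi$-maximal there iff there is no $c$ with $a>_\phi c>_\phi b$, i.e. iff $a\succ_\phi b$. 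Hence $\Sigma_K^b=\{a\rto A_\succ : \phi(\{a\})\setminus\{a\}\neq\emptyset\}$, which is exactly the representation of the cover relation $\succ_\phi$ discussed before Example~\ref{cover}.

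For item~(2), I would invoke the refinement machinery from the start of this section. By Proposition~\ref{min gen} (via Proposition~\ref{12}), each minimal order generator $C_K\subseteq C$ satisfies $\phi(C_K)=\phi(C)$, and by construction $Y_K=\op{max}(Y_C)\subseteq Y_C$ with $\phi(Y_K)=\phi(Y_C)$, so $C_K\rto Y_K$ has the same closure-theoretic content as $C\rto Y_C$, namely $\phi(C_K)=\phi(C_K\cup Y_K)$. Thus $\Sigma_K$ is obtained from $\Sigma_C$ by replacing each implication $C\rto Y_C$ with $X^*_C\rto Y^*_C$ where $X^*_C=C_K\subseteq C$, $Y^*_C=Y_K\subseteq Y_C$, and $\phi(X^*_C)=\phi(C)$; that is, $\Sigma_K$ is a refinement of the canonical basis in the sense defined above. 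Since every refinement is a basis of $\langle S,\phi\rangle$ defining the same closure system, $\Sigma_K$ is a basis.

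For item~(3), since $\Sigma_K$ is a refinement it has exactly one implication per critical set, so $|\Sigma_K|=|\Sigma_C|$, and $\Sigma_C$ is minimum by Theorem~\ref{DG}(1); hence $\Sigma_K$ is minimum. The size inequality $s(\Sigma_K)\leq s(\Sigma_C)$ is immediate from $C_K\subseteq C$ and $Y_K\subseteq Y_C$, so $|C_K|+|Y_K|\leq|C|+|Y_C|$ for every critical set, and summing over $\mathcal{C}$ gives the claim. The only point requiring mild care — the main ``obstacle,'' though it is minor — is making item~(1) precise: one must check that no critical set of size $\geq 2$ can produce a singleton minimal order generator, which follows because the construction in Proposition~\ref{min gen} only removes $\geq_\phi$-maximal elements while preserving the property of being an order ideal with closure $\phi(C)$, and $C$ being critical means no proper quasi-closed subset has closure $\phi(C)$; in particular the process cannot collapse a genuinely non-binary critical set to a binary premise.
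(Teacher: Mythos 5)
Items (1) and (3) of your argument are essentially the paper's: for (1) you identify the binary implications of $\Sigma_K$ with the singleton critical sets and observe that $\op{max}(\phi(\{a\})\setminus\{a\})$ is exactly the set of lower covers of $a$; for (3) you count one implication per critical set and use $C_K\subseteq C$, $Y_K\subseteq Y_C$. (Your extra check that a critical set with $|C|\geq 2$ cannot have a singleton minimal order generator is a reasonable point the paper glosses over, though the cleanest reason is that an order ideal whose unique maximal element is $x$ equals $\phi(\{x\})$, which is closed, contradicting $C$ being quasi-closed but not closed.)

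Item (2), however, contains a genuine circularity. In the paper, a \emph{refinement} is by definition a \emph{basis} $\{X^*_C\rto Y^*_C\}$ with $X^*_C\subseteq C$, $Y^*_C\subseteq Y_C$, $\phi(X^*_C)=\phi(C)$; the sentence ``every refinement is a minimum basis'' presupposes the basis property rather than granting it. So you cannot conclude ``$\Sigma_K$ is a refinement, hence a basis'' --- establishing that $\Sigma_K$ still defines the same closure system is precisely the content of (2). Nor does the observation that each $C_K\rto Y_K$ has ``the same closure-theoretic content'' as $C\rto Y_C$ (meaning $\phi(C_K)=\phi(C)$ and $\phi(Y_K)=\phi(Y_C)$, both computed with the \emph{original} $\phi$) suffice: shrinking conclusions of valid implications can destroy the basis property in general (this is exactly the phenomenon of Example \ref{Co4}, where $\Sigma_E$ fails to be a basis even though each retained implication is valid). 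What saves the $K$-basis is the binary part: a set respecting $\Sigma_K^b$ is a $\geq_\phi$-down-set because the cover relation generates $\geq_\phi$ by transitivity, and therefore $C_K\rto Y_K$ together with $\Sigma_K^b$ yields $C_K\rto Y_C$, which in turn yields $C\rto Y_C$ since $C_K\subseteq C$. This short chaining argument (which is how the paper proves (2)) is the missing step; once it is inserted, your proof is complete.
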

\begin{proof}
First, note that if $x\geq_\phi y$ and $y\not = x$, then $(x\rto Y)\in \Sigma_C$ and $y \in Y$, due to definition of $\Sigma_C$. Secondly, $\{x\}$ will be a minimal order generator for itself and $Y$ comprises all elements $y \in S$ with $x\geq_\phi y$. Choosing only $\geq_\phi$-maximal elements $Y_K\subseteq Y$ will give exactly the lower covers of $x$, which proves (1).

Since every $a\geq_\phi b$ is in the transitive closure of cover sub-relation of $\geq_\phi$, every $(x\rto Y)\in \Sigma_C^b$ follows from $\Sigma_K^b$. Also, $C_K\rto Y_C$ follows from $C_K\rto Y_K$ and $\Sigma_K^b$.
Finally, $(C\rto Y_C)\in \Sigma_C$ follows from $C_K\rto Y_C$, since $C_K\subseteq C$. Hence, $\Sigma_C$ follows from $\Sigma_K$.

Vice versa, $C_K\rto Y_C$ follows from $\Sigma_C$, due to the definition of $C_K$. Moreover, $C_K\rto Y_K$ follows from $C_K\rto Y_C$, since $Y_K\subseteq Y_C$, which finishes the proof of (2).

Part (3) easily follows from the definition of $\Sigma_K$, since $C_K\subseteq C$ and $Y_K\subseteq Y_C$. 
\end{proof}

The following example demonstrates the advantage of considering $K$-bases due to possible size reduction of the canonical basis, while preserving its property of the minimality. The other purpose of this example is to show that there are possibly several $K$-bases associated with a closure system.

\begin{exm}\label{2Kbases}
Consider the standard closure system on $S=\{x,y,z,e,d,u\}$ given by its closure lattice in Figure 1.
Consider the critical set $C=S\setminus \{e\}$, for which $\phi(C)=S$. Following procedure in Proposition \ref{min gen}, we would have two maximal elements $x,y \in C$ that can be removed: $X_1=C\setminus \{x\}$, $X_2=C\setminus \{y\}$, $\phi(X_1)=\phi(X_2)=\phi(C)$.

Thus, $(C\rto e)\in \Sigma_C$ can be refined to either $yd\rto e$ ($\op{max} X_1=\{y,d\}$) or $xd\rto e$ (($\op{max} X_2=\{x,d\}$), and this system has two $K$-bases.

Here is the first of $K$-bases:\\
$y\rto u, z\rto u, d\rto z, e\rto d, yd\rto e, xu\rto y, zy\rto x$, with $s(\Sigma_K)=17$.

In the second one, the implication $yd\rto e$ is replaced by $xd\rto e$, which does not change the size of the basis.

Compare both with $\Sigma_C$:\\
$y\rto u, z\rto u, d\rto zu, e\rto dzu, xyzdu\rto e, xu\rto y, zyu\rto x$, with $s(\Sigma_C)=24$.
\end{exm}

\begin{figure}[htbp]\label{pic1}
\begin{center}
\includegraphics[height=2.5in,width=6.0in]{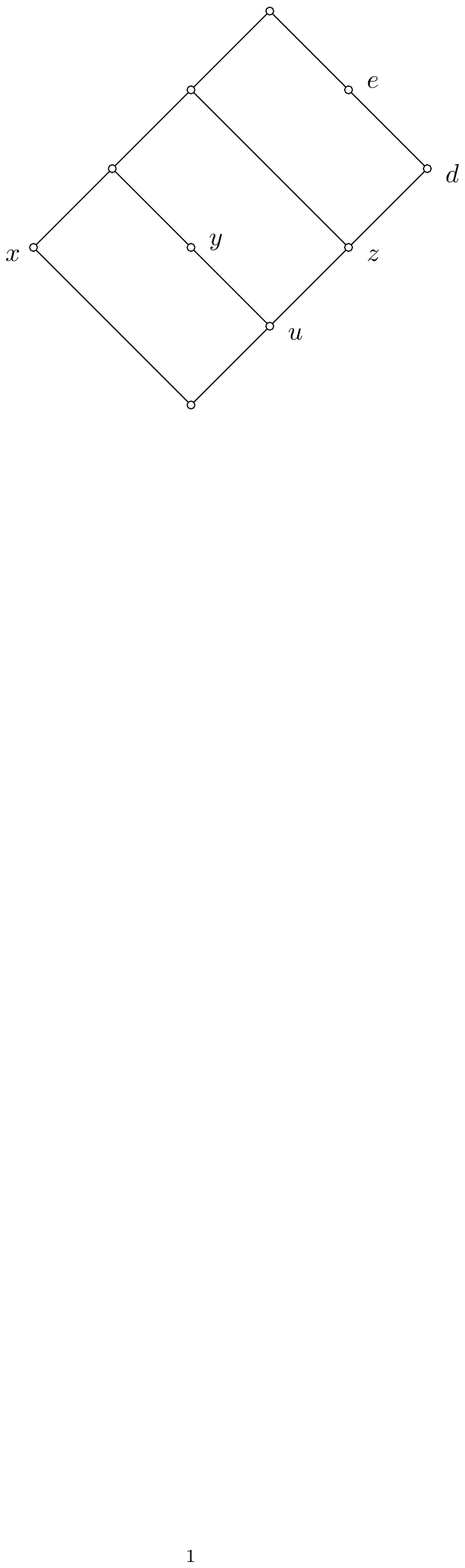}
\caption{Example \ref{2Kbases}}
\end{center}
\end{figure}

While searching for a minimal order generator of smallest cardinality, for each critical set $C$, could be a long process, finding \emph{some} minimal order generator is a fast quadratic algorithm. 

Let $\Sigma_C$ be a canonical basis of a standard closure system $\< S, \phi \>$. Assume that $|X|=n$, $|\Sigma_C|=m$, $s(\Sigma_C)=k$ and $s(\Sigma_C^b)=k_b$.
It was shown in Theorem 11.3 of \cite{FJN} that it will take time $O(nk_b+n^2)$ to produce the cover relation of the partially ordered set $(S,\geq_\phi)$. Moreover, an upper bound for the number of pairs in the cover relation is $k_b$.

\begin{prop}\label{Ktime}
Given the canonical basis $\Sigma_C$ of a standard closure system with $s(\Sigma_C)=k$, it will take time $O(k^2)$ to produce one minimal order generator for each $C\in \mathcal{C}$. Thus, it requires time $O(k^2)$ to produce a $K$-basis. 
\end{prop}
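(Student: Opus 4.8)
The plan is to carry out the complexity analysis in two stages: first establishing the cost of producing the cover relation of $(S,\geq_\phi)$, and then bounding the cost of running the procedure of Proposition \ref{min gen} on each critical set. For the first stage I would invoke the cited result (Theorem 11.3 of \cite{FJN}): from the binary part $\Sigma_C^b$ we obtain the cover relation $\succ_\phi$ in time $O(nk_b + n^2)$, and the number of covering pairs is at most $k_b$. Since $n \leq k$ and $k_b \leq k$, this is $O(k^2)$. Simultaneously, for each element $a \in S$ we want to have available the full set $\phi(\{a\}) = \{b : a \geq_\phi b\}$; these are exactly the right-hand sides of the binary implications of $\Sigma_C$ (together with $a$ itself), so they are read off directly, and their total size is $O(k_b) = O(k)$.

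For the second stage, fix a critical set $C$, appearing as the premise of an implication $C \rto Y_C$ in $\Sigma_C$. The procedure in Proposition \ref{min gen} repeatedly selects a $\geq_\phi$-maximal element $x_k$ of the current set $X_k$, tentatively removes it, and checks whether $\phi(X_k \setminus \{x_k\}) = \phi(C)$; if so it commits to the removal, if not it marks $x_k$ as non-removable and moves on. Since each element of $C$ is either removed or permanently marked, the loop makes at most $|C| \leq n \leq k$ iterations. The dominant cost per iteration is the single closure computation $\phi(X_k \setminus \{x_k\})$, which, using the forward-chaining algorithm implicit in Proposition \ref{inference}, runs in time linear in $s(\Sigma_C) = k$ (each implication of $\Sigma_C$ is scanned a bounded number of times). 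Hence processing one critical set $C$ costs $O(k \cdot k) = O(k^2)$. Determining and updating the $\geq_\phi$-maximal elements of $X_k$ adds only lower-order overhead once the cover relation is in hand, since $\op{max}(X_k)$ consists of those elements of $X_k$ with no $\succ_\phi$-cover inside $X_k$.

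Finally I would assemble the two stages. Building the cover relation is a one-time cost of $O(k^2)$. Then we iterate over the critical sets; there are $m = |\Sigma_C| \leq k$ of them, and each costs $O(k^2)$, giving $O(mk^2)$ in the crude accounting. To obtain the sharper bound $O(k^2)$ claimed in the statement, note that the work done on a critical set $C$ is really $O(|C|\cdot k)$, and $\sum_{C \in \mathcal{C}} |C| \leq s_L(\Sigma_C) \leq s(\Sigma_C) = k$; summing over all critical sets therefore gives $O(k \cdot k) = O(k^2)$ total for the second stage as well. Adding the two stages yields the overall bound $O(k^2)$, and since a $K$-basis is obtained by replacing each $C \rto Y_C$ with $C_K \rto \op{max}(Y_C)$ — the latter computable within the same cover-relation data — the full $K$-basis is produced in time $O(k^2)$.

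I expect the main subtlety to be the passage from the naive $O(mk^2)$ estimate to the stated $O(k^2)$: it relies on the observation that the per-critical-set cost is proportional to $|C|$ rather than to $n$, together with the amortization $\sum_C |C| \leq k$. One must be slightly careful that a single closure computation against $\Sigma_C$ genuinely runs in $O(k)$ rather than $O(k \cdot |C|)$ or worse; the standard linear-time forward-chaining implementation (maintaining, for each implication, a counter of premise elements not yet in the closure) delivers this, and I would cite or sketch that implementation rather than re-derive it.
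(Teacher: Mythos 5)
Your proof is correct and follows essentially the same route as the paper's: per critical set, at most $|C|$ maximal-element checks each costing $O(k)$ for a forward-chaining closure computation, with the total bounded by summing over all critical sets. You make explicit the amortization $\sum_{C}|C|\leq s_L(\Sigma_C)\leq k$ that the paper leaves implicit in its final sentence, which is a welcome clarification rather than a different argument.
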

\begin{proof}
 For each critical set $C$, one would need at maximum $k_b$ steps to recognize a maximal element $x\in C$. Then, the time $O(k)$ is needed to check whether $\phi(C\setminus \{x\})=\phi(C)$ (using the forward chaining procedure, for example). If this is true, then $X_1=C\setminus \{x\}$, and one proceeds with $X_1$ in place of $C$. Otherwise, we would search during time $O(k_b)$ for another maximal element of $C$. The number of maximal elements checked overall cannot be more than $|C|$, and the time spent on each of them is $O(k)$. The overall time is the summation over all critical sets, i.e., $O(k^2)$.
\end{proof}

\section{$D$-relation from the canonical basis}\label{D-rel}

The goal of this section is to show that the $D$-relation defined on $S$, for the standard closure system $\< S,\phi\>$, via the concept of  minimal covers, can be recovered from the canonical basis. The main result is achieved in Theorem \ref{tr} by the series of Lemmas \ref{DG and D}-\ref{reduced}. This allows us to recognize effectively the systems without $D$-cycles that will be treated further in section \ref{Ebas}.

Recall that $bDa$, for $a,b \in S$ in a standard closure system $\< S, \phi\>$, iff there is a $\ll$-minimal cover $A$ for $b$ such that $a \in A$. In particular, the closure system satisfies the implication $A \rto b$. We note that such an implication belongs to the non-binary part of the $D$-basis of $\< S,\phi\>$, see \cite{ANR11}. Vice versa, if $A\rto b$ is a non-binary implication of the $D$-basis, then $A\cup \{b\}\subseteq \op{Ji} L$ and $A$ is a minimal cover for $b$. The binary part of $\Sigma_D$ can be given by $\succ_\phi$.

We will denote $D^\delta$ the dual of $D$, i.e., $(a,b) \in D^\delta$ iff $(b,a) \in D$. Thus, $a$ is the first entry in the pair $(a,b)\in D^\delta$ iff $a\in A$ appears on the left in a \emph{non-binary} implication $A\rto b$ from the $D$-basis.

Recall that in the standard closure system $\< S, \phi\>$, the binary relation $\geq_\phi$ on $S$ defined as:
\[ a\geq_\phi b \text{  iff  } b \in \phi(a)
\]
is a partial order.  A subset $J\subseteq S$ will be called a $\geq_\phi$-ideal, if $a \in J$ and $a\geq_\phi b$ imply $b \in J$. We will denote by $\Id_\phi(A)$ an $\geq_\phi$-ideal generated by $A\subseteq S$.

\begin{df}\label{delta}
Given any \emph{regular} basis $\Sigma$, we define a binary relation $\Delta_\Sigma$ on $S$ as follows: $(a,b)\in  \Delta_\Sigma$ iff there is $(A\rto B) \in \Sigma^{nb}$ such that $a \in A, b \in B$. By $\Delta_\Sigma^{tr}$ we will denote the transitive closure of the relation $\Delta_\Sigma$.
\end{df}

Finally, we introduce an important modification of the canonical basis $\Sigma_C$ that will prove useful.
For each $A_C\rto B_C$ in $\Sigma_C^{nb}$ let $A_K\rto B_K$ be a corresponding implication from any $K$-basis of the closure system. Replace each implication $A_C\rto B_C$ in $\Sigma_C^{nb}$ by $A_K\rto B_C$. Denote by $\Sigma^*$ this new set of implications which, apparently, also forms a basis. Note that $\Sigma^*$ is a refinement of the canonical basis, with no change to the right sides of implications.

We note that although $\Sigma_C$ is uniquely defined for a closure system, $\Sigma^*$ is not, reflected in the fact that there might be several $K$-bases for the closure system.

Our main goal now is to prove the following statement. 

\begin{thm}\label{tr}
$(D^\delta)^{tr}=\Delta_{\Sigma^*}^{tr}$
\end{thm}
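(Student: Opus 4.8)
The plan is to prove the two inclusions $(D^\delta)^{tr}\subseteq\Delta_{\Sigma^*}^{tr}$ and $\Delta_{\Sigma^*}^{tr}\subseteq(D^\delta)^{tr}$ separately, and since both sides are transitive relations it suffices to show that each single step of one relation is captured by a (possibly longer) chain of steps in the other. For the inclusion $(D^\delta)^{tr}\subseteq\Delta_{\Sigma^*}^{tr}$ we start from a single pair $(a,b)\in D^\delta$, i.e.\ $bDa$, so there is a $\ll$-minimal cover $A$ of $b$ with $a\in A$. I would first relate this to the canonical basis: the implication $A\rto b$ holds in $\<S,\phi\>$, and the set $\sigma(A)$ is quasi-closed (using Theorem~\ref{W}(I) applied to the non-redundant $D$-basis, or directly), hence contains a critical set $C$ with $\phi(C)=\phi(A)$. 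The point is that the minimal cover $A$ of $b$ is, up to the cover relation $\succ_\phi$, exactly a minimal order generator of $C$ — this is the content of Propositions~\ref{12} and~\ref{min gen}, since minimal order generators are precisely the $\ll$-minimal join representations of $\phi(C)$. So the implication $A_K\rto B_C$ of $\Sigma^*$ corresponding to $C$ has $A_K\ll A$ (with $A_K$ being some minimal order generator) and $b\in\phi(C)=\phi(A_K)$; by chasing $b$ back through the binary part $\Sigma^{*b}=\Sigma_K^b$ (which represents $\succ_\phi$) together with this one non-binary implication, and using that elements related by $\geq_\phi$ give $\Delta$-steps only through the transitive closure of covers, I would produce a $\Delta_{\Sigma^*}$-chain from each element of $A_K$ (hence, after adjusting, from $a$) to $b$.

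For the reverse inclusion $\Delta_{\Sigma^*}^{tr}\subseteq(D^\delta)^{tr}$, take a single pair $(a,b)\in\Delta_{\Sigma^*}$, coming from a non-binary implication $A_K\rto B_C$ of $\Sigma^*$ with $a\in A_K$, $b\in B_C$. Here $A_K$ is a minimal order generator of the critical set $C$, so $\phi(A_K)=\phi(C)$ and $b\in\phi(C)$. The element $b$ may not lie in a minimal cover having $a$ as a member, so I expect to need an inference argument: using Proposition~\ref{inference}, $b$ has a $\Sigma_D$-inference from $A_K$, and by peeling off one minimal-cover implication of the $D$-basis at a time, each step contributes a $D^\delta$ pair; chaining these, together with the observation that $A_K$ itself refines to (is covered-above by) the elements occurring in such covers, yields $(a,b)\in(D^\delta)^{tr}$. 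The regularity of $\Sigma^*$ is needed here to guarantee that no spurious step $(a,b)$ with $b\in\phi(\{a\})$ is introduced by the non-binary part, so that $\Delta_{\Sigma^*}$ genuinely tracks non-binary (i.e.\ $D$-relevant) dependencies; the binary part of $\Sigma^*$ represents $\succ_\phi$, whose transitive closure is $\geq_\phi$, and this is absorbed into $(D^\delta)^{tr}$ because $a\geq_\phi b$ already implies that $b$ lies below $a$ and can be reached, but one must be careful that $\geq_\phi$-steps alone do not appear in $\Delta_{\Sigma^*}$ by regularity.

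The main obstacle, I expect, is the bookkeeping in both directions connecting a \emph{single} minimal cover in the $D$-relation sense with the \emph{minimal order generator} chosen in $\Sigma^*$: these need not be literally equal, only related via the $\ll$ quasi-order and the cover relation $\succ_\phi$, so each $\Delta$-step or $D^\delta$-step must be resolved into a short chain, and one has to verify carefully that the transitive closures match rather than the relations themselves. A secondary technical point is handling the case analysis for whether $\phi(A_K)=\phi(A)$ exactly or only up to refinement, and ensuring that passing from the canonical basis $\Sigma_C$ to $\Sigma^*$ (replacing $A_C$ by $A_K$ but keeping $B_C$ on the right) does not disturb the correspondence — here the fact that $\Sigma^*$ is a refinement with unchanged right sides is exactly what is used. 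I would organize the argument as the series of lemmas already announced (Lemmas~\ref{DG and D} through \ref{reduced}), each isolating one of these matching steps, and then assemble them.
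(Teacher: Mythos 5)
There is a genuine gap in your argument for the hard inclusion $D^\delta\subseteq\Delta_{\Sigma^*}^{tr}$. You propose to match the $\ll$-minimal cover $A$ of $b$ (witnessing $bDa$) with the premise $A_K$ of a \emph{single} non-binary implication $A_K\rto B_C$ of $\Sigma^*$ and then reach $b$ by ``chasing through the binary part.'' This cannot work as stated, for two reasons. First, $\Delta_{\Sigma^*}$ is by definition built only from $\Sigma^{*nb}$, so binary implications contribute no $\Delta$-steps at all; if $a\in A\setminus A_K$ and $a\geq_\phi a'$ for some $a'\in A_K$, the pair $(a,a')$ is simply not in $\Delta_{\Sigma^*}$, and your ``adjusting from $A_K$ to $a$'' has nothing to compose with. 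Second, and more fundamentally, one non-binary implication does not suffice: in Example \ref{A12} we have $(4,6)\in D^\delta$ (from the minimal cover $\{2,4\}$ of $6$) but $(4,6)\notin\Delta_{\Sigma^*}$, and the pair is recovered only through the two-step chain $(4,5),(5,6)$ coming from the two distinct implications $24\rto5$ and $15\rto6$ of $\Sigma^{*nb}$. What is missing is precisely the inference machinery the paper develops: (i) any $\Sigma$-inference of $b$ from $A$ can be converted, for a round basis, into a $\Sigma^{nb}$-inference from $\Id_\phi A$ (Lemma \ref{prop123}), and this survives replacing canonical premises by the $K$-basis premises since the conclusions are unchanged; (ii) the $\ll$-minimality of the cover $A$ forces $a$ to occur in some premise of \emph{every} such inference, else deleting $a$ from $\Id_\phi A$ would yield a properly refining cover (Lemma \ref{min cov}); and (iii) from any premise occurrence in a non-redundant inference one can chain forward to $b$ inside $\Delta_{\Sigma^*}^{tr}$ (Lemma \ref{tran}). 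Your sketch contains no substitute for steps (ii) and (iii). A secondary unsupported claim is $A_K\ll A$: a minimal order generator is $\ll$-minimal, but in a general closure system it need not refine every other generating set of $\phi(A)$.

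For the other inclusion, note that the paper proves the sharper statement $\Delta_{\Sigma^*}\subseteq D^\delta$ outright (Lemma \ref{DG and D}): if $A''\ll A_K$ were a properly refining cover of $b\in B_C$, then $\phi(A'')\subsetneq\phi(A_K)$ by $\ll$-minimality of $A_K$, and quasi-closedness of the critical set $C$ forces $b\in\phi(A'')\subseteq C$, contradicting $b\in\phi(C)\setminus C$. Your alternative of peeling off $D$-basis implications is both more roundabout and circular in spirit: to guarantee that $a$ itself appears in the resulting $D^\delta$-chain you would again need to know that $A_K$ is a minimal cover of $b$, which is exactly the content of the direct argument you are trying to avoid.
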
 

The meaning of Theorem \ref{tr} is that the basis $\Sigma^*$ recovers an important sub-relation $\Delta_{\Sigma^*}$ of the $D$-relation, so that any pair in the $D$-relation can be obtained via transitive closure of this sub-relation.

The theorem directly follows from Lemmas \ref{DG and D} and \ref{reduced}.

\begin{lem}\label{DG and D}
$\Delta_{\Sigma^*}\subseteq D^\delta$.
\end{lem}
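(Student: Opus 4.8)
The plan is to unwind the definition of $\Delta_{\Sigma^*}$ and, for each pair it records, exhibit an explicit minimal cover that witnesses the corresponding $D$-relation. So suppose $(a,b)\in\Delta_{\Sigma^*}$: then $\Sigma^*$ contains a non-binary implication $A_K\rto B_C$, where $A_K$ is a minimal order generator of a critical set $C$ (produced from the chosen $K$-basis), $B_C=\phi(C)\setminus C$, and $a\in A_K$, $b\in B_C$. First I would collect the structural data. By construction $A_K$ is an antichain of $(S,\geq_\phi)$, being the set of $\geq_\phi$-maximal elements of the order ideal $X_n=\Id_\phi(A_K)$, and $A_K\subseteq C$, $\phi(A_K)=\phi(C)$; moreover clause~(3) of Proposition~\ref{min gen}, applied to the maximal element $a$ of $X_n$, gives $\phi\bigl(X_n\setminus\{a\}\bigr)\subsetneq\phi(C)$. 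Since $A_K$ is an antichain, $X_n\setminus\{a\}$ equals $T:=\Id_\phi(A_K\setminus\{a\})\cup\bigl(\phi(\{a\})\setminus\{a\}\bigr)$, so $\phi(T)\subsetneq\phi(C)$ while $T\subseteq C$.

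The crucial step is the \emph{non-absorption estimate} $b\notin\phi(T)$, which I would deduce from the fact that the critical set $C$ is quasi-closed. Indeed $T\subseteq C\cap\phi(T)$, so $\phi(T)\subseteq\phi\bigl(C\cap\phi(T)\bigr)$; on the other hand $C\not\subseteq\phi(T)$, since otherwise $\phi(C)\subseteq\phi(T)\subsetneq\phi(C)$, so by property~(2) in the definition of a quasi-closed set, $C\cap\phi(T)$ is closed, whence $\phi(T)\subseteq C$. As $b\in\phi(C)\setminus C$, this forces $b\notin\phi(T)$, as claimed. Reading this inside the closure lattice $L=\op{Cl}(S,\phi)$ and writing $a_*=\phi(\{a\})\setminus\{a\}$ for the unique lower cover of the join irreducible $\phi(\{a\})$ and $t=\bigvee(A_K\setminus\{a\})$, the estimate says $\phi(\{b\})\not\leq a_*\vee t$ (and hence also $\phi(\{b\})\not\leq t$), while $\phi(\{b\})\leq\phi(C)=\phi(\{a\})\vee t$ and $\phi(\{b\})\not\leq\phi(\{c\})$ for every $c\in A_K$, the latter because $\phi(\{c\})\subseteq C$ but $b\notin C$.

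From this I would manufacture a minimal cover of $b$ containing $a$. The relations $\phi(\{b\})\leq\bigvee A_K$ and $\phi(\{b\})\not\leq\phi(\{c\})$ for $c\in A_K$ say that $A_K$ is a non-trivial cover of $b$. Starting from $A_K=\{a\}\cup(A_K\setminus\{a\})$, I repeatedly either delete an element of the part $A_K\setminus\{a\}$ or replace one of its elements by join irreducibles strictly below it, each time only if the result is still a cover of $b$; tracking the multiset of $\geq_\phi$-heights of the elements of that part shows the process terminates, say at $A'=\{a\}\cup W$ with $W\ll A_K\setminus\{a\}$ and hence $\bigvee W\leq t$. At termination no element of $W$ may be deleted from $A'$, nor replaced in $A'$ by strictly smaller join irreducibles, while keeping a cover of $b$; and $a$ itself may not be deleted (that would need $\phi(\{b\})\leq\bigvee W\leq t$) nor replaced by join irreducibles below $\phi(\{a\})$ (that would need $\phi(\{b\})\leq a_*\vee\bigvee W\leq a_*\vee t$), both ruled out by the estimate. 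By the characterization of minimal covers recalled in Section~\ref{prem}, $A'$ is therefore a minimal cover of $b$ with $a\in A'$, i.e.\ $bDa$, i.e.\ $(a,b)\in D^\delta$. As $(a,b)$ was an arbitrary element of $\Delta_{\Sigma^*}$, this gives $\Delta_{\Sigma^*}\subseteq D^\delta$.

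I expect the non-absorption estimate to be the heart of the matter; the reduction afterwards is bookkeeping with the definition of a minimal cover, and the facts preceding it are a direct readout of the definition of a minimal order generator. Two small points to watch: clause~(3) of Proposition~\ref{min gen} must be invoked for the \emph{specific} maximal element $a$ of $X_n$ (not some other one), and the termination of the reduction should be justified by a genuine well-founded measure, such as the multiset of $\geq_\phi$-heights under the multiset order, since a replacement step may increase the number of elements in $W$.
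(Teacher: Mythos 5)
Your argument is correct, but it takes a longer route than the paper. The paper proves the stronger fact that $A_K$ \emph{itself} is a $\ll$-minimal cover of every $b\in B_C$: if $A''\ll A_K$ covers $b$ with $A_K\not\subseteq A''$, then $\phi(A'')\neq\phi(C)$ because $A_K$ is the $\ll$-minimal join representation of $\phi(C)$ (Proposition~\ref{12}), hence $\phi(A'')\subsetneq\phi(C)$; since $A''\subseteq C$ and $C$ is quasi-closed, $\phi(A'')\subseteq C$, so $b\notin\phi(A'')$ --- done, and $(a,b)\in D^\delta$ for all $a\in A_K$ at once. You instead fix one $a\in A_K$, prove the non-absorption estimate $b\notin\phi\bigl(\Id_\phi(A_K)\setminus\{a\}\bigr)$ (which is exactly the same quasi-closedness mechanism, just applied to the specific ideal $\Id_\phi(A_K)\setminus\{a\}$ rather than to an arbitrary proper refinement), and then manufacture \emph{some} minimal cover containing $a$ by a terminating deletion/replacement process together with the single-step characterization of minimal covers. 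That machinery is sound (your care about invoking clause~(3) for the specific element $a$, about $A_K$ being an antichain, and about a well-founded termination measure is justified), but it is bookkeeping the paper avoids by refining all elements simultaneously. One consequence worth noting: your proof only yields that each $a\in A_K$ lies in some minimal cover of $b$, whereas the corollary immediately following the lemma (that the unit expansion of a $K$-basis is contained in the $D$-basis) cites the stronger fact from the paper's proof that $A_K$ is itself a minimal cover of each $b\in B_C$; with your route that corollary would need the paper's global refinement argument anyway.
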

\begin{proof}
We need to show that, for any $A_C\rto B_C$ from the canonical basis, if $A_K\subseteq A_C$ is a $\ll$-minimal representation of $U=\phi(A_C)$, then 
 $(a,b) \in D^\delta$, for every $a\in A_K$ and $b \in B_C$.
Apparently, $A_K$ is a non-trivial cover of $b$, for every $b \in B_C$. Suppose that $A_K$ can be properly refined to another cover $A''$ of $b$, i.e. $A'' \ll A_K$, $A''$ is a cover for $b$ and $A_K\not\subseteq A''$. Since $A_K$ is a $\ll$-minimal representation for $U=\phi(A_C)$, we cannot have $\phi(A'')=U$. Hence, $\phi(A'')\subset \phi(A_K)=U$. Then, since $A_C$ is a quasi-closed set and $A''\subseteq A_C$, we should have $b \in \phi(A'')\subseteq A_C$, a contradiction. Hence, $A_K$ is a minimal cover for $b \in B_C$ and $(a,b)\in D^\delta$ follows, for every $a\in A_K$. 
\end{proof}

This lemma allows us to establish a connection between any $K$-basis and the $D$-basis $\Sigma_D$.

\begin{cor}
For any $K$-basis $\Sigma_K$, its unit expansion $\Sigma_K^u$ is contained in $\Sigma_D$.
In particular, $|\Sigma_K^u|\leq |\Sigma_D|$.
\end{cor}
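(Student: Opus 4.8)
The plan is to verify the containment $\Sigma_K^u\subseteq\Sigma_D$ one unit implication at a time, treating the binary and non-binary parts of $\Sigma_K^u$ separately; the inequality $|\Sigma_K^u|\leq|\Sigma_D|$ is then immediate. For the binary part I would invoke part~(1) of the preceding lemma on easy facts about $K$-bases: $\Sigma_K^b$ represents the cover relation $\succ_\phi$ of $\geq_\phi$, so the binary unit implications occurring in $\Sigma_K^u$ are exactly the implications $x\rto y$ with $x\succ_\phi y$. Since the binary part of the $D$-basis $\Sigma_D$ is likewise given by $\succ_\phi$, all of these already belong to $\Sigma_D$.

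For the non-binary part, take a unit implication $C_K\rto y$ occurring in $\Sigma_K^u$, where $C_K$ is a minimal order generator of a critical set $C\in\C$ and $y\in Y_K=\op{max}\bigl(\phi(C)\setminus C\bigr)\subseteq\phi(C)\setminus C$. By Proposition~\ref{12}, $C_K$ is a $\ll$-minimal join representation of $\phi(C)$, so Lemma~\ref{DG and D} applies with $A_C=C$, $B_C=\phi(C)\setminus C$ and $A_K=C_K$; in particular $|C_K|\geq 2$, since $\phi(C)\setminus C\neq\0$ and a non-trivial cover has at least two elements. The argument for that lemma establishes not merely the pairwise inclusion $\Delta_{\Sigma^*}\subseteq D^\delta$ but the sharper fact that $C_K$ is itself a $\ll$-minimal cover of every $b\in\phi(C)\setminus C$, hence of $y$. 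Since also $C_K\cup\{y\}\subseteq S=\op{Ji} L$, the implication $C_K\rto y$ lies in the non-binary part of $\Sigma_D$. Combining the two parts yields $\Sigma_K^u\subseteq\Sigma_D$, and therefore $|\Sigma_K^u|\leq|\Sigma_D|$.

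The step I expect to require the most care is the appeal to Lemma~\ref{DG and D}: that lemma is stated as a relation between pairs in $\Delta_{\Sigma^*}$ and in $D^\delta$, whereas what is actually needed here is that the specific set $C_K$ is a minimal cover of each $b\in\phi(C)\setminus C$, so that $C_K\rto b$ genuinely occurs in $\Sigma_D$ and not merely that the pairs $(a,b)$ lie in $D^\delta$. This stronger assertion is exactly what is proved inside the argument for Lemma~\ref{DG and D} — no nontrivial refinement $A''\ll C_K$ can have closure $\phi(C)$ by $\ll$-minimality, and if $\phi(A'')\subsetneq\phi(C)$ then $b\in\phi(A'')\subseteq C$ because $C$ is quasi-closed, a contradiction — so I would simply cite that portion of its proof rather than reprove it.
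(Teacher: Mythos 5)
Your proof is correct and follows essentially the same route as the paper: the key step in both is to invoke not merely the statement of Lemma~\ref{DG and D} but the stronger fact established inside its proof, namely that $A_K=C_K$ is a minimal cover of every $b\in B_C=\phi(C)\setminus C$, so that each unit implication $C_K\rto y$ with $y\in Y_K\subseteq B_C$ already occurs in $\Sigma_D$. Your explicit check that the binary parts agree via $\succ_\phi$ is left implicit in the paper but is the same observation.
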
 
\begin{proof}
Indeed, as shown in Lemma \ref{DG and D}, if $A_K\rto B_C$ is in $\Sigma^*$, then $A_K$ is a minimal cover for every $b \in B_C$. Recall that $A_K$ is taken from implication $A_K\rto B_K$ of some $K$-basis, and $B_K=\max(B_C)\subseteq B_C$. Hence every implication $A_K\rto b$, with $b\in B_K$, is in the $D$-basis.
\end{proof}

We observe that Theorem \ref{tr} does not hold when $\Sigma^*$ is replaced by $\Sigma_C$.
This is due to the fact that an even weaker form of Lemma \ref{DG and D} fails for $\Sigma_C$.

\begin{exm}\label{A12}
It is possible that $\Delta_{\Sigma_C}\not \subseteq (D^\delta)^{tr}$.
Consider the closure system $\<\{1,2,3,4,5,6\},\phi\>$ with $\op{Cl}(X,\phi)$ on Fig.~\ref{A12Fig}, whose canonical basis $\Sigma_C$ is
\[
2\rto 1,3\rto 1,5\rto4,6\rto 13, 14\rto 3, 123\rto 6, 1345\rto 6, 12346\rto 5.
\]

Then $\Sigma^{*nb}$ is 
\[
14\rto 3,23\rto 6, 15\rto 6, 24\rto 5
\]

and $\Sigma_D^{nb}$ is 
\[
14\rto 3,23\rto 6, 15\rto 6, 24\rto 5, 24\rto 6
\]
We observe that $(6,5) \not \in (D^\delta)^{tr}$, since $6$ does not occur on the left side of the implications in $\Sigma_D^{nb}$, while $(6,5)\in \Delta_{\Sigma_C}$ due to the implication $12346\rto 5$ in $\Sigma_C$.

\begin{figure}[htbp]\label{A12Fig}
\begin{center}
\includegraphics[height=2.5in,width=5.0in]{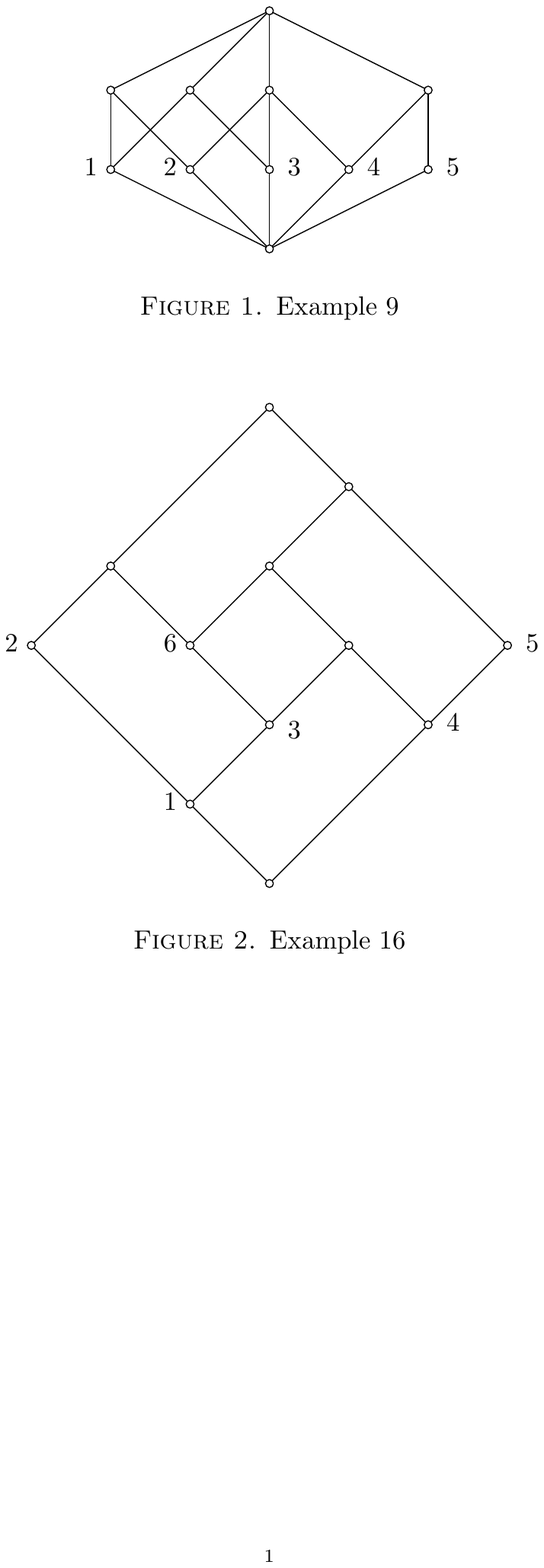}
\caption{Example \ref{A12}}
\end{center}
\end{figure}

\end{exm}

The reverse inclusion needed in Theorem \ref{tr} will come in Lemma \ref{reduced}, which will follow from a series of preliminary lemmas.

We will call a basis $\Sigma$ \emph{round}, if $A\cup B=\phi(A)$ for every $(A\rto B)\in \Sigma^{nb}$. In particular, due to the definition, the canonical basis is round.

\begin{lem}\label{prop123}
Let $\Sigma$ be any round basis of a closure system $\< S,\phi\>$. If an element $b\in S$ has a $\Sigma$-inference from $A\subseteq S$, then it has a $\Sigma^{nb}$-inference from $\Id_\phi A$. The converse statement is also true for an arbitrary basis $\Sigma$.
\end{lem}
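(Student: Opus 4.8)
The plan is to handle the two directions separately, with most of the work in the first (the ``round'' direction). Throughout, recall from Proposition~\ref{inference} that a $\Sigma$-inference of $b$ from $A$ is a sequence $\sigma_k = A_k \rto B_k$, $k \le m$, with $A_1 \subseteq A$, $b \in B_m$, and $A_k \subseteq A \cup B_1 \cup \dots \cup B_{k-1}$ for $k > 1$. For the easy converse direction: suppose $b$ has a $\Sigma^{nb}$-inference from $\Id_\phi A$. Each premise $A_k$ of that inference is contained in $\Id_\phi A \cup B_1 \cup \dots \cup B_{k-1}$. Since every element $c \in \Id_\phi A$ satisfies $c \le_\phi a$ for some $a \in A$, i.e.\ $c \in \phi(\{a\})$, it has a one-step $\Sigma^b$-inference from $\{a\} \subseteq A$ (the binary part of any basis generates $\ge_\phi$). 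So I would splice in, ahead of the given inference, short binary inferences producing each element of $\Id_\phi A$ from $A$; the concatenation is a $\Sigma$-inference of $b$ from $A$. This works for an arbitrary basis $\Sigma$, as claimed.

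For the main direction, assume $\Sigma$ is round and that $b$ has a $\Sigma$-inference $\sigma_1, \dots, \sigma_m$ from $A$. The plan is to convert it into a $\Sigma^{nb}$-inference from $\Id_\phi A$ by induction on the number of binary implications appearing in the inference. If some $\sigma_k = \{x\} \rto B_k$ is binary, then $x \in A \cup B_1 \cup \dots \cup B_{k-1}$. The key point is that, because $\Sigma$ is round, every set $B_j$ produced by a non-binary step $A_j \rto B_j$ satisfies $A_j \cup B_j = \phi(A_j)$, and one can show inductively that all elements ever produced lie in $\Id_\phi A \cup (\text{conclusions of non-binary steps})$, and moreover that the conclusions $B_j$ of non-binary steps are already $\ge_\phi$-downward closed relative to what is available --- more precisely, $B_k \subseteq \phi(\{x\}) \subseteq \Id_\phi(\{x\})$, so $B_k$ is subsumed by the $\ge_\phi$-ideal generated by whatever generated $x$. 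Thus a binary step never produces anything genuinely new beyond the $\ge_\phi$-ideal of the elements already present; I would argue that the binary step $\sigma_k$ can be deleted, after first replacing, in all later premises $A_\ell$ ($\ell > k$), any occurrence of an element of $B_k$ by $x$ (legitimate since $B_k \subseteq \phi(\{x\})$ means $x \ge_\phi$ each element of $B_k$, so those later premises remain inferable) --- and, if $x \in A$ itself, by noting $B_k \subseteq \Id_\phi A$ so those elements were available for free. Iterating removes all binary steps, leaving a $\Sigma^{nb}$-inference whose initial premise $A_1$ has been enlarged only by elements of the form described, all lying in $\Id_\phi A$; and $b \in \phi(A) = \phi(\Id_\phi A)$ is still produced, with $b$ itself possibly moved into $\Id_\phi A$ if the last step was binary (in which case the statement is trivial since then $b \in \phi(A)$, and one checks $b$ must in fact already appear as some non-binary conclusion or be handled directly).

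The main obstacle is the bookkeeping in that induction: when a binary step is deleted, later steps may reference elements of $B_k$ in their premises, and one must be sure those premises remain satisfied by the rewritten inference without reintroducing binary steps or breaking property (III) of Proposition~\ref{inference}. The clean way around this is to prove the stronger statement by induction on inference length: \emph{if $b$ has a $\Sigma$-inference from $A$, then $b \in \phi(A)$ and $b$ has a $\Sigma^{nb}$-inference from $\Id_\phi A$}, and to organize the deletion so that one removes the \emph{last} binary step in the inference, argue that everything after it is non-binary hence its premises draw only on non-binary conclusions plus $A \cup B_1 \cup \dots \cup B_{k-1}$, and use roundness to replace the contribution of $B_k$ by membership in $\Id_\phi A$ (via the earlier elements that produced $x$, which by a secondary induction already live in $\Id_\phi A$ or in non-binary conclusions). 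I expect the roundness hypothesis to be used exactly once but crucially: it guarantees $B_k \subseteq \phi(\{x\}) = \{x\} \cup B_k$-style control, i.e.\ that binary conclusions carry no information not already encoded in the $\ge_\phi$-order, which is precisely what lets the binary steps be absorbed into $\Id_\phi A$.
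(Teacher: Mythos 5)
Your converse direction is fine in outline (splice inferences of the elements of $\Id_\phi A$ from $A$ in front of the given $\Sigma^{nb}$-inference), though the claim that each $c\in\Id_\phi A$ has a \emph{one-step $\Sigma^b$-inference} from some $a\in A$ is not justified for an arbitrary basis; all you need, and all that is true, is that $A\rto \Id_\phi A$ holds in the system, so Proposition~\ref{inference} supplies \emph{some} $\Sigma$-inference of each such $c$ from $A$, and these can be prepended.

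The main direction contains the right key idea --- the invariant that $X_k=\Id_\phi A\cup B_1\cup\dots\cup B_{k-1}$ stays $\geq_\phi$-downward closed, with roundness entering at the non-binary steps --- but your execution of the deletion is flawed and never actually repaired. Replacing, in later premises $A_\ell$, occurrences of elements of $B_k$ by $x$ is not a legitimate move: the $A_\ell$ are premises of implications \emph{from $\Sigma$}, and after such a substitution $\sigma_\ell$ is no longer an implication of the basis, so the rewritten sequence is not a $\Sigma$-inference of anything. Your fallback (remove the last binary step, induct on length) is only sketched and still leans on the same patching idea. The point you are missing is that no substitution is needed at all: once you prove by induction on $k$ that $X_k$ is a $\geq_\phi$-ideal (for non-binary $\sigma_k$, roundness gives $A_k\cup B_k=\phi(A_k)$, a closed set and hence an ideal, so $X_k=X_{k-1}\cup\phi(A_k)$ is again an ideal; for binary $\sigma_k=\{a\}\rto B_k$, the premise $a$ lies in the ideal $X_{k-1}$, whence $B_k\subseteq\phi(\{a\})\subseteq X_{k-1}$ and $X_k=X_{k-1}$), every binary step contributes nothing, so all binary steps can be deleted simultaneously and every surviving premise is still covered by $\Id_\phi A$ together with the unchanged earlier non-binary conclusions. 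Note also that your closing remark misplaces the role of roundness: $B_k\subseteq\phi(\{x\})$ is automatic for any binary implication of any basis; roundness is needed precisely so that the \emph{non-binary} steps keep the accumulated set downward closed, which is what renders the binary steps redundant.
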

\begin{proof}

Let $\sigma_k = A_k\rto B_k$, $k\leq m$, be an inference of $A\rto b$ from $\Sigma$. 

We want to show that one can remove binary implications from this sequence at the cost of replacing $A$ in properties (I) and (III) of Proposition \ref{inference} by $\Id_\phi A$. 

For this we show by induction on $k$ that set $X_k=(\Id_\phi A) \cup B_1 \cup \dots \cup B_{k-1}$ is an $\geq_\phi$-ideal. $X_1=\Id_\phi A$ is and ideal by the definition. Suppose $X_{k-1}$ is an ideal and $\sigma_k \in \Sigma^{nb}$.
Recall that  $A_k\cup B_k=\phi(A_k)$, a closed set in $\< S,\phi\>$, in particular, it is an $\geq_\phi$-ideal. As $A_k\subseteq X_{k-1}$, it follows that $X_k=X_{k-1}\cup B_k$ is an $\geq_\phi$-ideal as well. 

If $\sigma_k \in \Sigma^{b}$, then $A_k=\{a\}$ and $a \in X_{k-1}$, but then $B_k\subseteq X_{k-1}$ by the induction hypothesis.
In particular, we see from the above argument that $X_{k-1}=X_k$, when $\sigma_k\in \Sigma^b$, and any such $\sigma_k$ can be removed from the sequence.

The converse statement follows from the observation that $A\rto \Id_\phi A$ holds in the closure system, whence by Proposition \ref{inference}, there exists a $\Sigma$-inference of every element of $\Id_\phi A$ from $A$. One can append an $\Sigma^{nb}$-inference of $b$ from  $\Id_\phi A$, to obtain an inference of $b$ from $A$.
\end{proof}

\begin{lem}\label{min cov}
Let $\Sigma$ be any basis of a closure system $\< S,\phi\>$. Consider $aD^\delta b$, and let $A\rto b$ be the corresponding implication, where $A$ is a $\ll$-minimal cover for $b$ and $a \in A$. If $\sigma_1, \dots, \sigma_n$ is a $\Sigma^{nb}$-inference of $b$ from $\Id_\phi A$, then $a \in A_k$ for some $k\leq n$. 
\end{lem}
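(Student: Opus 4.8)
The plan is to argue by contradiction. Suppose $a \notin A_k$ for every $k \leq n$. From the given $\Sigma^{nb}$-inference I will construct a non-trivial cover $Y$ of $b$ with $Y \ll A$ but $a \notin Y$, which contradicts the hypothesis that $A$ is a $\ll$-minimal cover of $b$.

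To build $Y$, set $A_* = \phi(\{a\}) \setminus \{a\}$, the join irreducibles strictly below $a$, let $W = \bigcup_{k \leq n}(A_k \cap A_*)$ collect the elements strictly below $a$ that occur as premises somewhere in the inference, and put $Y := (A \setminus \{a\}) \cup W$. Several things are then immediate: every element of $W$ lies in $\phi(\{a\})$, hence is $\geq_\phi$-below $a$, so $Y \ll A$; and $a \notin W$ (as $W \subseteq A_*$) and $a \notin A \setminus \{a\}$, so $a \notin Y$ and in particular $A \not\subseteq Y$. I would also check quickly that $b \not\leq y$ for every $y \in Y$: for $y \in A \setminus \{a\}$ this is inherited from $A$ being a non-trivial cover of $b$, while for $y \in W$ one has $\phi(\{y\}) \subseteq \phi(\{a\})$, so $b \leq y$ would force $b \leq a$, again contradicting non-triviality of the cover $A$.

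The substance of the argument is showing $b \in \phi(Y)$; for this I would show the inference can be carried out entirely within $\phi(Y)$, proving by induction on $k$ that $Y \cup B_1 \cup \dots \cup B_k \subseteq \phi(Y)$. The case $k = 0$ is just $Y \subseteq \phi(Y)$. For the step, take $z \in A_k$. By conditions (I) and (III) of Proposition~\ref{inference} (adapted to an inference from $\Id_\phi A$), either $z \in B_1 \cup \dots \cup B_{k-1}$, and then $z \in \phi(Y)$ by the inductive hypothesis, or $z \in \Id_\phi A = \bigcup_{x \in A}\phi(\{x\})$, so $z \in \phi(\{x\})$ for some $x \in A$. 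If $x \neq a$, then $z \in \phi(\{x\}) \subseteq \phi(Y)$ since $x \in Y$; if $x = a$, then $z \neq a$ — this is exactly where the assumption $a \notin A_k$ is used — so $z \in A_k \cap A_* \subseteq W \subseteq Y$, and again $z \in \phi(Y)$. Hence $A_k \subseteq \phi(Y)$, and since $A_k \rto B_k$ holds in the closure system, $B_k \subseteq \phi(A_k) \subseteq \phi(Y)$. Taking $k = n$ and using $b \in B_n$ yields $b \in \phi(Y)$.

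Thus $Y$ is a non-trivial cover of $b$ satisfying $Y \ll A$ and $A \not\subseteq Y$, contradicting the $\ll$-minimality of $A$; therefore $a \in A_k$ for some $k \leq n$. The only delicate point — the main obstacle — is the bookkeeping in the inductive step: one must see that every premise an inference from $\Id_\phi A$ can legitimately use falls into one of three cases — already produced by an earlier step (handled by induction), below some $x \in A$ with $x \neq a$ (handled by $\phi(\{x\}) \subseteq \phi(Y)$), or below $a$ (collected into $W$) — and that the last case never contains $a$ itself. No property of $\Sigma$ beyond its being a basis of the given standard closure system is needed.
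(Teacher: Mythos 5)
Your proof is correct and follows essentially the same strategy as the paper's: assuming $a$ never occurs in a premise, you produce a cover of $b$ that properly refines $A$ and omits $a$, contradicting the $\ll$-minimality of $A$. The only difference is that the paper takes the refining set to be the maximal elements of the ideal $\Id_\phi A\setminus\{a\}$ and invokes Lemma~\ref{prop123}, whereas you build $Y=(A\setminus\{a\})\cup W$ explicitly and re-verify $b\in\phi(Y)$ by a direct induction along the inference; both are sound.
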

\begin{proof}
Since $A$ is a $\ll$-minimal cover for $b$, every $a\in A$ is a maximal element in $\Id_\phi A$. Therefore, $J_a=\Id_\phi A\setminus \{a\}$ is also an $\geq_\phi$-ideal. If some $a \in A$ does not appear in any $A_k$, $k\leq n$, then sequence of $\sigma_k$, $k\leq n$, is, in fact, a $\Sigma^{nb}$-inference of $b$ from $J_a$. 
Let $A'$ be the set of maximal elements of $J_a$.  Then $A'\ll A$ and $J_a=\Id_\phi A'$. By Lemma \ref{prop123}, there exists a $\Sigma$-inference of $b$ from $A'$, and thus $A'\rto b$ holds in the system. Since $A\not \subseteq A'$, this contradicts the $\ll$-minimality of cover $A$ for $b$. 
\end{proof} 

\begin{lem}\label{tran} 
For any regular basis $\Sigma$, let $\sigma_k \in \Sigma^{nb}$, $k\leq n$, be a non-redundant inference of $b$ from $\Id_\phi A$. Let $a \in A_s$, for some $s\leq n$. Then $(a,b)\in \Delta_{\Sigma}^{tr}$.  
\end{lem}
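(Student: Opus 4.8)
The plan is to prove Lemma \ref{tran} by induction on the length $n$ of the non-redundant $\Sigma^{nb}$-inference $\sigma_1,\dots,\sigma_n$ of $b$ from $\Id_\phi A$. Write $\sigma_k = A_k \rto B_k$, so that $A_k \subseteq \Id_\phi A \cup B_1 \cup \dots \cup B_{k-1}$ for $k>1$, $A_1 \subseteq \Id_\phi A$, and $b \in B_n$; by non-redundancy we may assume $b \in B_n \setminus (B_1 \cup \dots \cup B_{n-1})$, and in fact that the last step is genuinely used. Fix $a \in A_s$ for some $s \le n$. I want to produce a $\Delta_\Sigma$-chain from $a$ to $b$.

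First I would dispose of the base-like case and set up the recursion. If $a \in A_n$, then since $b \in B_n$ and $(A_n \rto B_n) \in \Sigma^{nb}$, we get $(a,b) \in \Delta_\Sigma$ directly, and we are done. So assume $s < n$, i.e.\ $a$ occurs in the premise of some earlier implication. Now pick any $c \in B_n$ that actually lies in $A_n$'s justification path but, more to the point, I would look at the premise $A_n$ of the final step: by the inference property, every $c \in A_n$ satisfies $c \in \Id_\phi A \cup B_1 \cup \dots \cup B_{n-1}$. The idea is: for each such $c$, either $c \in \Id_\phi A$, in which case $c$ is $\geq_\phi$-below some element of $A$ and we handle it via regularity together with Lemma \ref{prop123} applied to a shorter inference; or $c \in B_t$ for some $t < n$, in which case $(\sigma_1,\dots,\sigma_t)$ is a $\Sigma^{nb}$-inference of $c$ from $\Id_\phi A$ of length $t < n$ in which $a \in A_s$ still appears (here we need $s \le t$, which we should be able to arrange by choosing $c$ appropriately, or by first reducing to a non-redundant sub-inference witnessing $a$'s role).

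The heart of the argument, and the step I expect to be the main obstacle, is bridging from ``$a$ appears in some $A_s$ with $s<n$'' to ``$a$ is connected to $A_n$ through $\Delta_\Sigma$.'' The natural move is: since $a \in A_s$, the inference $(\sigma_1,\dots,\sigma_s)$ together with the fact that $\sigma_s$ is used means $B_s$ is produced, and $B_s \neq \emptyset$; pick $b' \in B_s$, so $(a,b') \in \Delta_\Sigma$. Then I need a non-redundant $\Sigma^{nb}$-inference of $b$ from $\Id_\phi A \cup B_s$ (or from $\Id_\phi(\{b'\} \cup \text{something})$) that is strictly shorter, or one in which $b'$ plays the role of $a$ in an inference of length $< n$, so that induction yields $(b',b) \in \Delta_\Sigma^{tr}$; composing gives $(a,b) \in \Delta_\Sigma^{tr}$. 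To make the length strictly decrease one wants to argue that once $B_s$ is available, $\sigma_s$ (and hence at least one implication) can be removed from the tail of the inference — this is where non-redundancy of the inference, regularity of $\Sigma$ (to know $\Delta_\Sigma$ sees all the non-binary interaction, cf.\ Lemma \ref{prop123}), and a careful reindexing must be combined. A clean way to organize this: induct on $n$, and in the inductive step replace $\Id_\phi A$ by $\Id_\phi A \cup B_1 \cup \dots \cup B_{s}$ and drop $\sigma_1,\dots,\sigma_s$ and any now-redundant later steps, obtaining a non-redundant $\Sigma^{nb}$-inference of $b$ of length $\le n-s < n$ from $\Id_\phi(A')$ where $A'$ consists of the $\geq_\phi$-maximal elements of that enlarged ideal; choosing $b' \in B_s \subseteq \Id_\phi A'$ with $b' \in A'$ (maximality may require another small argument, or one picks $b'$ among the maximal elements of $B_s$ not below anything already present, which is nonempty by non-redundancy of $\sigma_s$), and then some $A_t$ with $t$ in the new inference contains $b'$ (because $\sigma_s$ was used non-redundantly, its output $B_s$ must feed a later premise). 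Applying the inductive hypothesis to $b'$ in place of $a$ gives $(b',b) \in \Delta_\Sigma^{tr}$, and since $(a,b') \in \Delta_\Sigma$ via $\sigma_s$, transitivity of $\Delta_\Sigma^{tr}$ closes the argument.

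I should also check the edge cases: if $s = n$ we are done immediately as noted; if no later premise uses $B_s$ at all, then $\sigma_s$ could have been deleted, contradicting non-redundancy, so this cannot happen — which is exactly why non-redundancy of the inference is in the hypothesis. Finally, regularity of $\Sigma$ is used to guarantee that the ideal $\Id_\phi A$ is the right object to carry through (so that elements of $\Id_\phi A$ never need to appear as premises of non-binary implications in a fresh inference, by Lemma \ref{prop123} and Proposition \ref{reg prop}), keeping all the genuine action inside $\Sigma^{nb}$ and hence visible to $\Delta_\Sigma$.
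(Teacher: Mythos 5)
Your core combinatorial idea is the right one and is in fact the paper's: if $a\in A_n$ then $(a,b)\in\Delta_{\Sigma}$ outright, and otherwise non-redundancy forces some $b'\in B_s$ to occur in a later premise, giving $(a,b')\in\Delta_{\Sigma}$ and reducing the problem to $b'$. The trouble is the induction you wrap around it. You induct on the length $n$ and, to make it decrease, rebuild a fresh inference from the enlarged starting set $\Id_\phi A\cup B_1\cup\dots\cup B_s$, pruned to be non-redundant. Two steps there do not go through. First, the lemma's hypothesis requires the starting set to have the form $\Id_\phi A'$; for a merely \emph{regular} basis the enlarged set need not be a $\geq_\phi$-ideal (that fact is established in Lemma \ref{prop123} only for \emph{round} bases, and roundness is not assumed here), so the inductive hypothesis is not applicable to your new inference as stated. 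Second, and more seriously, after you prune the tail $\sigma_{s+1},\dots,\sigma_n$ to a non-redundant inference from the enlarged set, the implication $\sigma_t$ whose premise contained your chosen $b'$ may itself be discarded --- once all of $B_s$ sits in the starting set, $\sigma_t$ may no longer be needed --- in which case $b'$ occurs in no premise of the new inference and the inductive hypothesis says nothing about $(b',b)$. Your own hedges (``we need $s\le t$, which we should be able to arrange'', ``maximality may require another small argument'') sit exactly over these holes.

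The repair is to keep the given inference fixed and induct on $n-s$, the distance of the premise containing $a$ from the end; this is what the paper does. The base case $s=n$ is your first observation. For $s<n$, non-redundancy of the \emph{given} sequence yields $b'\in B_s\cap A_t$ for some $t>s$ (otherwise $\sigma_s$ could be deleted without violating conditions (I)--(III) of Proposition \ref{inference}); then $(a,b')\in\Delta_{\Sigma}$ since $\sigma_s\in\Sigma^{nb}$, and the inductive hypothesis applies directly to $b'\in A_t$ because $n-t<n-s$, giving $(b',b)\in\Delta_{\Sigma}^{tr}$ and hence $(a,b)\in\Delta_{\Sigma}^{tr}$. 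No new inference is constructed, no ideal needs to be verified, and no re-pruning is required. Note also that regularity and Lemma \ref{prop123} play no role inside this proof; regularity appears in the hypothesis only so that $\Delta_{\Sigma}$ is defined.
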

\begin{proof}
Proceed by induction on $t=n-k$. If $t=0$, i.e., $k=n$, then $a' \in A_n$, $b \in B_n$, whence $(a',b) \in \Delta_{\Sigma}$.
Suppose we have shown this for all $t<n-k$, and consider $\sigma_k=A_k\rto B_k$ and some $a'\in A_k$. If none of elements in $B_k$ belongs to any $A_s$, $s>k$, then this implication could be removed from the sequence, contradicting the assumption that the sequence is non-redundant. Hence, there is $b' \in B_k\cap A_s$, $s>k$. Since $n-s <n-k$, by the inductive assumption, $(b',b)\in \Delta_{\Sigma}^{tr}$. Besides, $(a',b') \in \Delta_{\Sigma}$. Hence, $(a',b) \in \Delta_{\Sigma}^{tr}$.
\end{proof}

Note that the following statement now follows from Lemmas \ref{prop123}-\ref{tran}.

\begin{cor}\label{D and DG}
Let $\Sigma$ be any regular round basis of a closure system $\< S,\phi\>$. Then $D^\delta \subseteq \Delta_{\Sigma}^{tr}$. In particular, this holds for $\Sigma=\Sigma_C$.
\end{cor}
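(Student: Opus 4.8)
The plan is to derive Corollary~\ref{D and DG} by chaining together the three preceding lemmas. Fix a regular round basis $\Sigma$ and suppose $(a,b) \in D^\delta$, i.e.\ $b D a$. By definition there is a $\ll$-minimal cover $A$ for $b$ with $a \in A$, and in particular the implication $A \rto b$ holds in $\< S,\phi \>$. First I would apply Proposition~\ref{inference} to get a $\Sigma$-inference of $b$ from $A$; then, since $\Sigma$ is round, Lemma~\ref{prop123} upgrades this to a $\Sigma^{nb}$-inference $\sigma_1,\dots,\sigma_n$ of $b$ from $\Id_\phi A$. Passing to a non-redundant sub-sequence if necessary (discarding any $\sigma_k$ whose conclusion is never used downstream), I may assume this $\Sigma^{nb}$-inference is non-redundant, which is the hypothesis needed for Lemma~\ref{tran}.

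Next I would invoke Lemma~\ref{min cov}: because $A$ is a $\ll$-minimal cover for $b$ and $a \in A$, the element $a$ must actually occur on the left side of some implication in this $\Sigma^{nb}$-inference, say $a \in A_s$ with $s \leq n$. With that occurrence in hand, Lemma~\ref{tran}, applied to the regular basis $\Sigma$ and the non-redundant $\Sigma^{nb}$-inference, yields $(a,b) \in \Delta_\Sigma^{tr}$. Since $(a,b)$ was an arbitrary element of $D^\delta$, this gives $D^\delta \subseteq \Delta_\Sigma^{tr}$. The final sentence, that this applies to $\Sigma = \Sigma_C$, is immediate: the canonical basis is regular (its non-binary premises are critical sets, which by definition cannot be reduced to a single generator) and round (stated explicitly in the text, $A \cup B = \phi(A)$ for every implication $C \rto (\phi(C) \setminus C)$).

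The one point that needs a little care — and the only place I expect any friction — is the passage to a \emph{non-redundant} $\Sigma^{nb}$-inference while preserving the conclusion that $a \in A_s$ for some $s$. Concretely: Lemma~\ref{min cov} is stated for an arbitrary $\Sigma^{nb}$-inference of $b$ from $\Id_\phi A$, so its conclusion ($a$ appears in some $A_k$) already holds for whatever inference we produce; and trimming the inference to a non-redundant one only removes implications, so if the trimmed inference still infers $b$ from $\Id_\phi A$ and $a$ appears in it, we are fine — but one should check that the trimming can be done so that $a$ is not removed. In fact the cleaner order is: first trim to non-redundant, \emph{then} apply Lemma~\ref{min cov} to that non-redundant inference to locate $a$ in some $A_s$, and finally apply Lemma~\ref{tran}. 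This ordering sidesteps the issue entirely, since Lemma~\ref{min cov} places no non-redundancy hypothesis on the inference it is fed. No genuine obstacle remains; the corollary is essentially a bookkeeping composition of the machinery already built.
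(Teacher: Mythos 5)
Your proof is correct and follows exactly the route the paper intends: the paper gives no separate argument for this corollary beyond the remark that it "follows from Lemmas \ref{prop123}--\ref{tran}," and your chain (Proposition \ref{inference} $\to$ Lemma \ref{prop123} $\to$ trim to non-redundant $\to$ Lemma \ref{min cov} $\to$ Lemma \ref{tran}) is precisely the composition used in the paper's proof of the analogous Lemma \ref{reduced}. Your ordering remark (trim first, then locate $a$ via Lemma \ref{min cov}) matches what the paper does and correctly disposes of the only delicate point.
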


It remains to show that the containment of Corollary \ref{D and DG} will still hold when we replace basis $\Sigma_C$ by $\Sigma^*$ defined prior to Theorem \ref{tr}.

\begin{lem}\label{reduced}
$D^\delta\subseteq \Delta_{\Sigma ^*}^{tr}$.
\end{lem}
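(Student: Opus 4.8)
The plan is to reduce the claim to the containment $D^\delta\subseteq\Delta_{\Sigma_C}^{tr}$ already obtained in Corollary \ref{D and DG}, by transporting $\Sigma_C$-inferences over to $\Sigma^*$. The genuine obstruction is that $\Sigma^*$, being a refinement of $\Sigma_C$ with shrunk premises $A_K\subseteq A_C$ but unchanged right sides, is in general \emph{not} round, so Lemma \ref{prop123} is not directly available for it. On the other hand, Lemma \ref{min cov} holds for an arbitrary basis, and Lemma \ref{tran} holds for any \emph{regular} basis; and $\Sigma^*$ is regular, since each non-binary premise $A_K$ is a minimal order generator, hence a $\geq_\phi$-antichain, so for $|A_K|\geq 2$ it cannot be written as $\{c\}\cup F$ with $\emptyset\ne F\subseteq\phi(\{c\})$ (any such $f\in F$, $f\ne c$, would give $c\geq_\phi f$). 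So the only work is to produce, from a given pair of $D^\delta$, a non-redundant $\Sigma^{*nb}$-inference of the right shape to feed into Lemmas \ref{min cov} and \ref{tran}.

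Concretely, I would fix $(a,b)\in D^\delta$, i.e.\ a $\ll$-minimal cover $A$ of $b$ with $a\in A$, so that $A\rto b$ holds in $\langle S,\phi\rangle$. By Proposition \ref{inference} together with the roundness of $\Sigma_C$, Lemma \ref{prop123} yields a $\Sigma_C^{nb}$-inference $\tau_1,\dots,\tau_n$ of $b$ from $\Id_\phi A$, say $\tau_k=P_k\rto Q_k$ with $P_k$ critical. The transport step is to replace each $\tau_k$ by the corresponding implication $\tau_k^*=P_k^{K}\rto Q_k$ of $\Sigma^{*nb}$, where $P_k^{K}\subseteq P_k$ is the chosen minimal order generator. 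Since premises only got smaller and right sides were untouched, conditions (I)--(III) of Proposition \ref{inference} survive verbatim: $P_1^{K}\subseteq P_1\subseteq\Id_\phi A$; $b\in Q_n$; and $P_k^{K}\subseteq P_k\subseteq\Id_\phi A\cup Q_1\cup\dots\cup Q_{k-1}$. Hence $\tau_1^*,\dots,\tau_n^*$ is a $\Sigma^{*nb}$-inference of $b$ from $\Id_\phi A$, and pruning it to a non-redundant inference $\rho_1,\dots,\rho_m$ (still of $b$ from $\Id_\phi A$) preserves all of these hypotheses.

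Finally I would apply the two lemmas in turn. Lemma \ref{min cov}, applied to $\Sigma^*$ with the $\ll$-minimal cover $A$ and the non-redundant $\Sigma^{*nb}$-inference $\rho_1,\dots,\rho_m$ of $b$ from $\Id_\phi A$, forces $a$ to occur in the premise of some $\rho_s$. Then Lemma \ref{tran}, applied to the regular basis $\Sigma^*$, to this non-redundant $\Sigma^{*nb}$-inference, and to $a$ in the premise of $\rho_s$, gives $(a,b)\in\Delta_{\Sigma^*}^{tr}$. Since $(a,b)\in D^\delta$ was arbitrary, $D^\delta\subseteq\Delta_{\Sigma^*}^{tr}$, which together with Lemma \ref{DG and D} (taking transitive closures on both sides) finishes Theorem \ref{tr}. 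The step I expect to require the most care is the transport argument --- checking that replacing the canonical premises $P_k$ by the $K$-basis premises $P_k^{K}$ cannot destroy an inference, and confirming that $\Sigma^*$ remains regular so that Lemma \ref{tran} applies; the rest is a straight chaining of the preceding lemmas.
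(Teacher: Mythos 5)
Your proposal is correct and follows essentially the same route as the paper: obtain a $\Sigma_C^{nb}$-inference of $b$ from $\Id_\phi A$ via Lemma \ref{prop123}, transport it to $\Sigma^*$ by shrinking the premises to the $K$-basis premises (conclusions unchanged), prune to a non-redundant inference, and then chain Lemmas \ref{min cov} and \ref{tran}. Your extra check that $\Sigma^*$ is regular (because each $A_K$ is a $\geq_\phi$-antichain), which Lemma \ref{tran} requires, is a point the paper leaves implicit, and it is a correct and worthwhile observation.
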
 

\begin{proof}
Consider $(a,b)\in D^\delta$. Let $A\rto b$ be an implication corresponding to a minimal cover $A$ of $b$, where $a \in A$. Let $\sigma_1,\dots, \sigma_n$ be a $\Sigma_C^{nb}$-inference of $b$ from $\Id_\phi A$. Replace $A_t$ in each $\sigma_t=A_t\rto B_t$, $t\leq n$, by the corresponding $\sigma^*=A_t^* \rto B_t$ in $\Sigma^*$.
Since $A_t^*\subseteq A_t$, and conclusions of all implications are still the same, the sequence $\sigma_1^*,\dots, \sigma_n^*$ is an $\Sigma^{*nb}$-inference of $b$ from $\Id_\phi A$. Remove unnecessary implications, if needed, making it non-redundant. 
By Lemma \ref{min cov}, $a \in A_k$ for some $k \leq n$, and by Lemma \ref{tran}, $(a,b)\in  \Delta_{\Sigma ^*}^{tr}$.
\end{proof}

Note that Lemma \ref{reduced} does not hold when $\Delta_{\Sigma ^*}^{tr}$ is replaced by $\Delta_{\Sigma ^*}$.

\begin{exm}
In the closure system of Example \ref{A12}, we have $(4,6)\in D^\delta$, but $(4,6)\not \in \Delta_{\Sigma^*}$. 
\end{exm}

The consequence is that the $D$-relation can be fully recovered from the transitive closure of the modified canonical basis $\Sigma^*$, whose premises are taken from any $K$-basis.

\begin{cor}
A closure system is without $D$-cycles iff $\Delta^{tr}_{\Sigma^*}$ does not have cycles.
\end{cor}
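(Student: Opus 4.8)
The plan is to derive this immediately from Theorem \ref{tr} together with an elementary remark about cycles in binary relations. First I would unwind the definition of ``without $D$-cycles.'' The relation $D$ on $S=\Ji(L)$ is irreflexive: a minimal cover $X$ of $x$ is in particular a non-trivial cover, so $x\not\le x_i$ for every $x_i\in X$, hence $x\notin X$ and $x\,D\,x$ cannot hold. Consequently a $D$-cycle $x_1\,D\,x_2\,D\,\dots\,D\,x_n$ with $x_1=x_n$ and $n>2$ exists if and only if $(x,x)\in D^{tr}$ for some $x\in S$; that is, the closure system is without $D$-cycles precisely when $D^{tr}$ is irreflexive. The same reduction applies verbatim to $\Delta_{\Sigma^*}$: every non-binary implication occurring in $\Sigma^*$ is of the form $A_K\rto B_C$ with $A_K\subseteq C$ and $B_C=\phi(C)\setminus C$, so its premise and conclusion are disjoint and $\Delta_{\Sigma^*}$ is irreflexive; hence ``$\Delta^{tr}_{\Sigma^*}$ has no cycles'' means exactly that $\Delta^{tr}_{\Sigma^*}$ is irreflexive.

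Next I would record that passing to the dual relation does not change the presence of cycles. For any relation $R$ one has $(R^\delta)^{tr}=(R^{tr})^\delta$ (reversing a chain in $R^\delta$ gives a chain in $R$), and $R$ is irreflexive if and only if $R^\delta$ is, since $(x,x)\in R\iff(x,x)\in R^\delta$. Applying this to $R=D$ shows that $D^{tr}$ is irreflexive if and only if $(D^\delta)^{tr}=(D^{tr})^\delta$ is irreflexive. Combining this with the previous paragraph, the closure system is without $D$-cycles if and only if $(D^\delta)^{tr}$ is irreflexive.

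Finally I would invoke Theorem \ref{tr}, which asserts $(D^\delta)^{tr}=\Delta_{\Sigma^*}^{tr}$. Therefore $(D^\delta)^{tr}$ is irreflexive if and only if $\Delta_{\Sigma^*}^{tr}$ is, and chaining the equivalences yields: the closure system is without $D$-cycles iff $\Delta^{tr}_{\Sigma^*}$ has no cycles. I expect no real obstacle here, since all the substantive content is already supplied by Theorem \ref{tr} through Lemmas \ref{DG and D} and \ref{reduced}; the only point requiring a little care is the bookkeeping observation that, for an irreflexive relation, ``admits a directed cycle of length $>2$'' and ``the transitive closure has a fixed point'' describe the same phenomenon, and that this equivalence is legitimate simultaneously for $D$ (hence $D^\delta$) and for $\Delta_{\Sigma^*}$.
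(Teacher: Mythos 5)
Your argument is correct and is essentially the paper's own: the paper derives the corollary directly from Theorem \ref{tr}, and your write-up just makes explicit the routine bookkeeping (irreflexivity of $D$ and $\Delta_{\Sigma^*}$, compatibility of transitive closure with taking duals) that the paper leaves implicit.
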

This follows from Theorem \ref{tr}.

\begin{thm}\label{mainD}
There exists a polynomial algorithm, in the size of $s(\Sigma_C)$, to recognize a closure system without $D$-cycles. 
\end{thm}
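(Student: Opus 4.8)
The plan is to assemble a concrete procedure from results already in hand and then bound each of its steps by a polynomial in $k := s(\Sigma_C)$. The only non-obvious ingredient is \emph{correctness}, and that has already been supplied: by Theorem \ref{tr} and the corollary that $\langle S,\phi\rangle$ is without $D$-cycles if and only if $\Delta_{\Sigma^*}^{tr}$ has no cycles. Everything else is bookkeeping. So I would simply verify that the passage from $\Sigma_C$ to the cycle test is polynomial.

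First I would construct a $K$-basis $\Sigma_K$ from $\Sigma_C$; by Proposition \ref{Ktime} this takes time $O(k^2)$. Next I would form the modified basis $\Sigma^*$ exactly as defined just before Theorem \ref{tr}: for each non-binary implication $A_C \rto B_C$ of $\Sigma_C$, replace the premise $A_C$ by the premise $A_K$ of the corresponding implication of $\Sigma_K$, leaving every right-hand side unchanged. Once $\Sigma_K$ is available this is a single linear pass over $\Sigma_C$, so $\Sigma^*$ is obtained within time $O(k^2)$.

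Then I would read off the relation $\Delta_{\Sigma^*}$ directly from Definition \ref{delta}: scan each $(A \rto B) \in \Sigma^{*nb}$ and record the pair $(a,b)$ for every $a \in A$ and $b \in B$. A single implication contributes $|A|\cdot|B|$ pairs, and since $s(\Sigma^*) \leq s(\Sigma_C) = k$, the relation $\Delta_{\Sigma^*}$ is a set of at most $O(k^2)$ pairs on the ground set $S$, with $n := |S| \leq k$, produced in time $O(k^2)$. Next compute the transitive closure $\Delta_{\Sigma^*}^{tr}$ by a standard Warshall pass on the $n \times n$ adjacency matrix, in time $O(n^3) = O(k^3)$. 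Finally, test for a cycle: since $A \cap B = \emptyset$ for every implication in a basis, $\Delta_{\Sigma^*}$ has no loop, so its digraph contains a directed cycle exactly when $(a,a) \in \Delta_{\Sigma^*}^{tr}$ for some $a \in S$; this is one scan of the diagonal of the transitive-closure matrix, in time $O(n)$. By the corollary to Theorem \ref{tr} cited above, this occurs if and only if $\langle S,\phi\rangle$ has a $D$-cycle. Hence the entire algorithm runs in time $O(k^3)$.

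The only place where a reader might hesitate is the use of $\Sigma^*$ rather than $\Sigma_C$ itself in the cycle test, and this is where the conceptual content of Section \ref{D-rel} is essential: Example \ref{A12} shows that $\Delta_{\Sigma_C}$ may contain pairs not even lying in $(D^\delta)^{tr}$, so a naive test built on $\Sigma_C$ could report a $D$-cycle where there is none. Replacing the premises of $\Sigma_C^{nb}$ by those of a $K$-basis is precisely what forces $\Delta_{\Sigma^*}^{tr} = (D^\delta)^{tr}$. So the main obstacle has already been cleared by Theorem \ref{tr}; what remains here is only to check — as above — that building $\Sigma_K$, forming $\Sigma^*$, extracting $\Delta_{\Sigma^*}$, computing its transitive closure, and scanning the diagonal are each polynomial in $k$.
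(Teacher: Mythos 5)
Your proposal is correct and follows the paper's own argument: the paper likewise derives correctness from Theorem \ref{tr} (via the corollary that the system is without $D$-cycles iff $\Delta^{tr}_{\Sigma^*}$ is acyclic) and cites Proposition \ref{Ktime} for the polynomial construction of $\Delta_{\Sigma^*}$, followed by a standard polynomial cycle test. Your version merely spells out the bookkeeping (forming $\Sigma^*$, Warshall's algorithm, the diagonal scan) in more detail than the paper does.
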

\begin{proof}
By Proposition \ref{Ktime}, it takes polynomial time to recover $\Delta_{\Sigma^*}$. Then it takes polynomial time to check whether this relation contains cycles.
\end{proof}
  
Note that the systems without $D$-cycles include, as a subclass, so-called \emph{quasi-acyclic} closure systems that are studied in the form of Horn Boolean functions in P.~Hammer and A.~Kogan \cite{HK}. For every basis $\Sigma$ one can define binary relation $\Box_\Sigma$ similar to $\Delta_\Sigma$: $(a,b)\in\Box_\Sigma$ iff there exists $(A\rto B)\in \Sigma$ such that $a\in A, b\in B$. The difference is that $\Delta_\Sigma$ is built only on the non-binary part of (regular) $\Sigma$. A quasi-acyclic closure system can be defined as a closure system for which there exists a basis $\Sigma$ such that $\Box_\Sigma^{tr}$ does not have cycles, unless a cycle is fully within the binary part $\Box_{\Sigma^b}^{tr}$. The latter cycles never occur in the standard closure systems considered in the current paper. In particular, if a closure system is quasi-acyclic, then it does not have $D$-cycles. See K.~Adaricheva \cite{A12} for further discussion and examples.

\section{$K$-basis in closure systems with unique criticals}\label{Kbas SD}

Starting with this section, we will consider the closure systems which we call $UC$-systems.

\begin{df}\label{UCsys} A closure system $\< S, \phi\>$ is called a system with \emph{unique criticals}, or $UC$-system, if, for any critical sets $C_1,C_2 \in \mathcal{C}$, $\phi(C_1)=\phi(C_2)$ implies $C_1=C_2$.
\end{df}

An immediate consequence of this definition is that, for every essential element $Y=\phi(Y)$, there exists a unique implication $C\rto \phi(C)\setminus C$ in the canonical basis, for which $\phi(C)=Y$.

\begin{prop} For every closure system $\< S, \phi\>$ given by some basis $\Sigma$, it requires time $O(s(\Sigma)^2\cdot |\Sigma|)$ to determine whether it is a $UC$-system.
\end{prop}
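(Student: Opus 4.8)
The plan is to exhibit an explicit algorithm and to bound its running time. Put $n=|S|$, $m=|\Sigma|$, $k=s(\Sigma)$, with $n\leq k$ and $m\leq k$, and recall that for any finite set of implications $\Pi$ the closure $\phi_\Pi(X)$ of a subset $X\subseteq S$ is computable by forward chaining in time $O(s(\Pi))$. By Definition~\ref{UCsys}, $\<S,\phi\>$ is a $UC$-system iff the map $C\mapsto\phi(C)$ is injective on the set $\mathcal{C}$ of critical sets; so the whole task reduces to extracting $\mathcal{C}$ from $\Sigma$ and comparing the closures of its members.

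First I would replace $\Sigma$ by a non-redundant basis $\Sigma'$ of $\<S,\phi\>$ in one sweep: process the implications in turn, deleting one whenever it follows from those currently retained; each test is a single closure computation, so this costs $O(mk)$, and a routine argument shows the surviving set is non-redundant. Next I would precompute $\phi(U):=\phi_{\Sigma'}(U)$ for every premise $U$ occurring in $\Sigma'$ (total cost $O(mk)$), and then, for each $(U\rto V)\in\Sigma'$, compute the saturation $\sigma(U)$ via the closed form
\[
\sigma(U)=\phi_{\Sigma_{<}(U)}(U),\qquad \Sigma_{<}(U):=\{(X\rto Y)\in\Sigma' : \phi(X)\subsetneq\phi(U)\}.
\]
This identity admits a short proof from the following description of the saturation operator: a set $Q\supseteq U$ with $\phi(Q)=\phi(U)$ is closed or quasi-closed precisely when $\phi(Z)\subseteq Q$ for every $Z\subseteq Q$ with $\phi(Z)\subsetneq\phi(U)$. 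For a fixed $U$, assembling $\Sigma_{<}(U)$ takes $O(mn)$ set comparisons (the sets $\phi(X)$ are already at hand), and the additional closure computation takes $O(k)$; hence all the sets $\sigma(U)$ are produced in time $O(m^2n+mk)$.

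I would then group the sets $\sigma(U)$ according to the value $\phi(\sigma(U))=\phi(U)$; within each group $\mathcal{S}_Y=\{\sigma(U):\phi(U)=Y\}$, discard every member that properly contains another member of the same group, and check whether exactly one set survives, declaring $\<S,\phi\>$ a $UC$-system iff every group passes. Correctness rests on two facts: by Theorem~\ref{DG}(2) every critical set equals $\sigma(U)$ for some $(U\rto V)\in\Sigma'$, and by Theorem~\ref{W}(I) --- this is where non-redundancy of $\Sigma'$ is used --- every $\sigma(U)$ with $(U\rto V)\in\Sigma'$ is quasi-closed. Consequently, for each essential set $Y$, the $\subseteq$-minimal members of $\mathcal{S}_Y$ are exactly the critical sets with closure $Y$: each critical set with closure $Y$ lies in $\mathcal{S}_Y$ and is minimal there because it is minimal among all quasi-closed sets with closure $Y$, while a non-critical $Q\in\mathcal{S}_Y$ strictly contains a critical $C'$ with $\phi(C')=Y$, which again lies in $\mathcal{S}_Y$ by Theorem~\ref{DG}(2), so $Q$ is not minimal. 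Thus $\<S,\phi\>$ is a $UC$-system iff every $\mathcal{S}_Y$ has a unique $\subseteq$-minimal element, which is what the test decides. The pairwise containment tests here cost $O(m^2n)$, so the algorithm runs in time $O(m^2n+mk)$; since $n\leq k$ and $m\leq k$, this is $O(k^2m)=O(s(\Sigma)^2\cdot|\Sigma|)$.

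I expect the point requiring the most care to be the handling of the saturation operator: one must verify the closed form for $\sigma(U)$ so that each saturation costs only one closure computation rather than an unstructured search, and one must check that the computed family $\{\sigma(U):(U\rto V)\in\Sigma'\}$ simultaneously contains every critical set and consists entirely of quasi-closed sets, so that ``being $\subseteq$-minimal in one's closure class'' coincides with ``being critical''. Everything else --- the non-redundant reduction, the closure computations, and the pairwise comparisons --- is routine, and the tally above confirms it stays within the stated bound.
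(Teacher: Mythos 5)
Your proof is correct and follows essentially the same route as the paper's: both reduce the question to computing the critical sets (the premises of the canonical basis) and then comparing their closures pairwise, staying within the stated time bound. The only difference is that the paper simply invokes Day's $O(s(\Sigma)^2)$ construction of the canonical basis, whereas you unfold that construction explicitly (the non-redundancy sweep, the closed form $\sigma(U)=\phi_{\Sigma_{<}(U)}(U)$, and the minimality filtering within each closure class); your saturation formula is correct and consistent with Lemma~\ref{2-3-extended}.
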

\begin{proof}
Indeed, it will require $O(s(\Sigma)^2)$ time to build the canonical basis. With $\Sigma_C$ available, one needs to check whether $\phi(C_1)\not = \phi(C_2)$, for any two premises $C_1,C_2$ of the canonical basis. It takes time $O(s(\Sigma_C))$ to find the closure of any given set, so verifying all the pairs requires time $O(s(\Sigma_C)\cdot n^2)$. Thus, overall time should not exceed $O(s(\Sigma)^2\cdot |\Sigma|)$.
\end{proof}

One important advantage of $UC$-closure systems is that one can establish a lower bound for $s_R(\Sigma^{nb})$, among all minimum bases $\Sigma$. 

Let $\mathcal{C}_{>1}\subseteq  \mathcal{C}$ be the subset of all critical sets with more than one element. For every $C \in \mathcal{C}_{>1}$, define $M(C) = \{y \in X: y \text{ is a } \geq_\phi-\text{maximal in } \phi(C)\setminus C \text{, and } y\not \in \phi(C') \text{ when } \phi(C') \subset \phi(C), C'\in \mathcal{C}_{>1}\}$.
 
Recall from Theorem~\ref{DG} (2) that any minimum basis of a closure system has the form $\Sigma=\{X_C\rto Y_C: C \in \mathcal{C}, \sigma(X_C)=C\}$.

\begin{lem}\label{max} Let $\<S,\phi\>$ be a $UC$-closure system, and $\Sigma$ be any minimum basis. 
If $(X_C\rto Y_C)\in \Sigma$ is an implication in $\Sigma$ corresponding to $C \in \mathcal{C}$, then
$M(C) \subseteq Y_C$.
\end{lem}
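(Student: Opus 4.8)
The plan is to show that any $y \in M(C)$ must appear in the conclusion $Y_C$ of the unique implication from $\Sigma$ associated with $C$, by arguing that if $y$ were missing, then $y$ could only be recovered through a $\Sigma$-inference using a strictly smaller essential element, which would contradict maximality of $y$ or the $UC$-hypothesis.

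First I would fix $C \in \mathcal{C}_{>1}$, fix $y \in M(C)$, and suppose for contradiction that $y \notin Y_C$, where $X_C \to Y_C$ is the implication of $\Sigma$ with $\sigma(X_C) = C$. Since $y \in \phi(C) = \phi(X_C)$ and $y \notin Y_C$, the implication $X_C \to y$ holds but is witnessed by a nontrivial $\Sigma$-inference $\sigma_1, \dots, \sigma_n$ of $y$ from $X_C$ that does not consist of the single step $\sigma_0 = X_C \to Y_C$ producing $y$ directly. Using Lemma~\ref{prop123} (after noting any minimum basis can be taken round, or more carefully working with the canonical basis and Lemma~\ref{2-3-extended} to transfer the inference), I would reduce to a non-redundant $\Sigma^{nb}$-inference of $y$ from $\Id_\phi(X_C)$, and consider the last implication $\sigma_k = A_k \to B_k$ in the inference with $y \in B_k$. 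The premise $A_k$ satisfies $A_k \subseteq \Id_\phi(X_C) \cup B_1 \cup \dots \cup B_{k-1} \subseteq \phi(C)$, so $\phi(A_k) \subseteq \phi(C)$, and $\sigma(A_k) = C'$ is some critical set with $\phi(C') = \phi(A_k) \subseteq \phi(C)$.

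The key step is ruling out $\phi(C') = \phi(C)$. If $\phi(C') = \phi(C)$, then by the $UC$-hypothesis $C' = C$, so the implication $\sigma_k$ is, up to aggregation, the implication of $\Sigma$ associated with $C$ — that is, $\sigma_k$ is essentially $X_C \to Y_C$ itself (here I use that a minimum basis has exactly one implication per critical set, by Theorem~\ref{DG}(2) together with the $UC$-property). But then $y \in B_k \subseteq Y_C$, contradicting our assumption. Hence $\phi(C') \subsetneq \phi(C)$ with $C' \in \mathcal{C}_{>1}$ (it has more than one element since $A_k$ does, as $\sigma_k \in \Sigma^{nb}$, so $\sigma(A_k)$ is not a singleton quasi-closed set; I would verify $|C'| > 1$ carefully). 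Now $y \in B_k \subseteq \phi(A_k) = \phi(C')$, and $\phi(C') \subsetneq \phi(C)$ with $C' \in \mathcal{C}_{>1}$, which directly contradicts the defining condition $y \notin \phi(C')$ for all such $C'$ in the definition of $M(C)$.

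The main obstacle I anticipate is the bookkeeping in passing from a general $\Sigma$-inference to a $\Sigma^{nb}$-inference from $\Id_\phi(X_C)$ and then locating the "last" implication producing $y$ cleanly — in particular, making sure that the maximality of $y$ in $\phi(C) \setminus C$ is used correctly to guarantee $y$ is not introduced by a binary implication applied to some strictly smaller element, and ensuring the reduction keeps the inference non-redundant so that the relevant $A_k$ genuinely lies in a smaller essential element. A secondary subtlety is confirming that "minimum basis" plus $UC$ forces a bijection between implications and critical sets so that the case $\phi(C') = \phi(C)$ really does identify $\sigma_k$ with the unique implication for $C$; this should follow from Theorem~\ref{DG}(2), but it needs to be stated. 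Once these are pinned down, the contradiction with the definition of $M(C)$ is immediate.
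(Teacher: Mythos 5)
Your proof follows essentially the same route as the paper's: take a $\Sigma$-inference of $y$ from $X_C$, isolate the last implication $A_k\to B_k$ producing $y$, and split on whether $\phi(A_k)=\phi(C)$ (where $UC$ plus the one-implication-per-critical-set structure of a minimum basis forces $A_k\to B_k$ to be $X_C\to Y_C$, giving $y\in Y_C$) or $\phi(A_k)\subsetneq\phi(C)$ (contradicting the defining condition of $M(C)$). One caveat: the detour through Lemma~\ref{prop123} is both unnecessary and shaky --- that lemma needs a \emph{round} basis, which an arbitrary minimum basis need not be, and transferring the inference to the canonical basis would detach $\sigma_k$ from $\Sigma$ and break the identification with $X_C\to Y_C$ in the first case. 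The paper keeps the entire inference inside $\Sigma$, shows by induction that all $A_i,B_i\subseteq\phi(X_C)$, and uses only the $\geq_\phi$-maximality of $y$ in $\phi(C)\setminus C$ to conclude that the last implication is non-binary --- exactly the fallback you flag yourself --- so you should drop the reduction and argue that way directly.
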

\begin{proof}
We have $y_m\in \phi(X_C)$, so the implication $X_C\rto y_m$ follows from $\Sigma$. Let $\sigma_1,\dots, \sigma_k \in \Sigma$, $\sigma_i=A_i\rto B_i$, be a $\Sigma$-inference of $y_m$ from $X_C$. By inductive argument, one can show that $A_i, B_i\subseteq \phi(X_C)$. Since $y_m$ is maximal in $\phi(X_C)\setminus C=\phi(C)\setminus C$, the last implication $\sigma_k$ is not binary. If $\phi(A_k) \subset \phi(C)$, then $A_k\subseteq C'$ for a critical set $C'\not = C$,  $\phi(C')=\phi(A_k)\subset \phi(C)$. Thus, $y_m$ is also the maximal in $\phi(C')\setminus C'$, which is not possible by assumption. Hence, $\phi(A_k)=\phi(X_C)=\phi(C)$. But in closure systems without $D$-cycles, there exists only one critical set for an essential element $\phi(C)$, i.e., $X_C=A_k$ and $y_m\in Y_C$.
\end{proof}

\begin{cor}\label{sR} If $\mathcal{C}_{>1}=\{C_1,\dots, C_k\}$, then, for every minimum basis  $\Sigma$ of an $UC$-system, $s_R(\Sigma^{nb})\geq |M(C_1)|+\dots + |M(C_k)|$.
\end{cor}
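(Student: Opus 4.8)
The plan is to obtain Corollary~\ref{sR} by summing the conclusion of Lemma~\ref{max} over all of $\mathcal{C}_{>1}$, once we have identified which implications of a minimum basis constitute its non-binary part. Recall from the remark preceding Lemma~\ref{max} that any minimum basis $\Sigma$ of the system has the form $\Sigma = \{X_C \rto Y_C : C \in \mathcal{C}\}$ with $\sigma(X_C) = C$, and that this is a bijection between $\mathcal{C}$ and the implications of $\Sigma$, since $\sigma(X_C) = C$ recovers $C$ from $X_C$ and hence distinct critical sets have distinct premises.

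The one point that needs care is that $C \mapsto (X_C \rto Y_C)$ restricts to a bijection between $\mathcal{C}_{>1}$ and $\Sigma^{nb}$. Since $\sigma$ is extensive, $X_C \subseteq \sigma(X_C) = C$, so $|X_C| > 1$ already forces $C \in \mathcal{C}_{>1}$; this accounts for every implication of $\Sigma^{nb}$. For the reverse direction, suppose $C \in \mathcal{C}_{>1}$ but $|X_C| = 1$, say $X_C = \{a\}$. In a standard closure system the singleton $\{a\}$ is either closed (when $\phi(\{a\}) = \{a\}$) or quasi-closed, since for every closed set $X$ with $a \notin X$ we have $\{a\} \cap X = \emptyset$, which is closed because $\phi(\emptyset) = \emptyset$; in either case $\sigma(\{a\}) = \{a\}$. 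Then $C = \sigma(X_C) = \{a\}$ is a singleton, contradicting $C \in \mathcal{C}_{>1}$. Hence each $C_i \in \mathcal{C}_{>1}$ yields an implication $X_{C_i} \rto Y_{C_i}$ in $\Sigma^{nb}$, and these implications exhaust $\Sigma^{nb}$.

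Writing $\mathcal{C}_{>1} = \{C_1, \dots, C_k\}$, it follows that
\[
s_R(\Sigma^{nb}) = \sum_{i=1}^{k} |Y_{C_i}| \geq \sum_{i=1}^{k} |M(C_i)|,
\]
where the inequality is exactly Lemma~\ref{max}, which gives $M(C_i) \subseteq Y_{C_i}$ for each $i$. The main obstacle, and essentially the only step beyond bookkeeping, is the middle paragraph: checking that a minimum basis never represents a multi-element critical set by a binary implication. This is where standardness of $\langle S, \phi \rangle$ enters, in the form $\phi(\emptyset) = \emptyset$, which makes every singleton saturated. Once that is settled, the corollary is immediate from Lemma~\ref{max}.
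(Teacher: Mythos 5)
Your proof is correct and follows the route the paper intends: Corollary~\ref{sR} is stated as an immediate consequence of Lemma~\ref{max}, obtained by summing the containment $M(C_i)\subseteq Y_{C_i}$ over $\mathcal{C}_{>1}$. Your extra verification that the implications attached to multi-element critical sets are exactly the non-binary ones (via $X_C\subseteq\sigma(X_C)=C$ and the saturation of singletons in a standard system) is a correct filling-in of a detail the paper leaves implicit, not a different argument.
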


We will see in section \ref{optE} that this lower bound can be attained in the $E$-basis of a closure system without $D$-cycles.  
\vspace{0.3cm}

One important subclass of $UC$-systems is the class of closure systems whose closure lattice satifies $(SD_\vee)$.
We will call an element $t \in \op{Cl}(S,\phi)$ \emph{join-semidistributive}, if $(SD_\vee)$ holds at $t$: $t=x\vee y =x\vee z \rto t=x\vee (y\wedge z)$.
We first recall a well-known theorem of  B. J\'onsson and J.E. Kiefer.

\begin{thm}\label{JK62}\cite{JK62}
An element $t$ of a finite lattice is \jsd\! iff $t$ has a unique $\ll$-minimal representation.  
\end{thm}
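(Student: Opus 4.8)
\textbf{Proof proposal for Theorem~\ref{JK62}.}

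The plan is to prove both directions by connecting the join-semidistributive law at $t$ to the behaviour of the set of $\ll$-minimal covers / join representations of $t$. Let me write $\R(t)$ for the collection of all join representations of $t$, i.e. finite subsets $X \subseteq \Ji(L)$ with $\bigvee X = t$, and recall that $X \ll Y$ means every element of $X$ lies below some element of $Y$. A representation is $\ll$-minimal if no representation strictly $\ll$-refines it except those containing it. The key structural fact I would establish first is that $\ll$ directed-downward-closes nicely: if $X_1, X_2 \in \R(t)$, then one can ``meet'' them componentwise to try to produce a common refinement, and this is exactly where $(SD_\vee)$ enters.

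For the direction ``$(SD_\vee)$ at $t$ $\Rightarrow$ unique $\ll$-minimal representation'': suppose $X_1$ and $X_2$ are two $\ll$-minimal representations of $t$. First I would reduce to the case where both are \emph{irredundant} (no proper subset joins to $t$); $\ll$-minimality already forces this, since dropping an element gives a refining representation not containing the original. Then take any $x \in X_1$. Write $t = x \vee \bigvee(X_1 \setminus \{x\}) = \bigvee X_2$. The plan is to use $(SD_\vee)$ repeatedly (in its iterated/infinitary-for-finite form, which follows from the binary law by induction) to show $t = x \vee \bigvee_{y \in X_2}(y \wedge \bigvee(X_1\setminus\{x\}))$ or a similar identity that lets me replace each element of $X_2$ by something below an element of $X_1$, producing a refinement of $X_2$ below $X_1$; combined with the symmetric argument and $\ll$-minimality this forces $X_1 \subseteq X_2$ and $X_2 \subseteq X_1$ up to the refinement order, hence $X_1 = X_2$ after taking maximal elements. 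I expect the bookkeeping of ``which element absorbs which'' to be the main obstacle — one has to be careful that the $(SD_\vee)$ substitution genuinely lands each piece strictly below a member of the other representation, using join-irreducibility to avoid the degenerate case where a piece equals $t$.

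For the converse ``unique $\ll$-minimal representation $\Rightarrow$ $(SD_\vee)$ at $t$'': suppose $t = x \vee y = x \vee z$ but $t \neq x \vee (y \wedge z)$. The idea is to manufacture two distinct $\ll$-minimal representations of $t$. Refine $\{x, y\}$ down to a $\ll$-minimal representation $Y$ of $t$ and $\{x, z\}$ down to a $\ll$-minimal representation $Z$; by uniqueness $Y = Z =: W$. Every element of $W$ lies below some element of $\{x,y\}$ and below some element of $\{x,z\}$; the elements of $W$ not below $x$ must therefore lie below both $y$ and $z$, hence below $y \wedge z$. So $t = \bigvee W \leq x \vee (y \wedge z)$, contradicting the assumption. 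This direction is the easier one; the only subtlety is confirming that every representation $\ll$-refines to a $\ll$-minimal one in a finite lattice, which is immediate by descending along the finite poset $(\R(t), \ll)$ and collapsing to maximal elements.

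Finally I would assemble the two implications into the stated equivalence, noting the (already-used elsewhere in the paper) translation that ``$\ll$-minimal representation'' is precisely the lattice-theoretic counterpart of ``minimal order generator'' via Proposition~\ref{12}, so the theorem says exactly that join-semidistributivity at $t$ is equivalent to $t$ having a single minimal order generator. The anticipated hard part remains the forward direction's substitution argument; if the direct iterated-$(SD_\vee)$ manipulation proves awkward, a fallback is to invoke the standard characterization of $(SD_\vee)$ via the existence of a well-defined least join representation and cite that the $\ll$-minimal ones all coincide with it, but I would prefer the self-contained argument sketched above.
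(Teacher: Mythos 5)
First, note that the paper does not actually prove this statement: it is quoted verbatim from J\'onsson--Kiefer \cite{JK62}, so there is no internal proof to compare against, and your fallback of "citing the standard characterization" is in fact what the paper does. Your converse direction is correct and complete: refining $\{x,y\}$ and $\{x,z\}$ (via the join irreducibles below $x,y,z$) to $\ll$-minimal representations, invoking uniqueness to get a common $W$, and observing that each $w\in W$ lies below $x$ or below $y\wedge z$, gives $t=\bigvee W\leq x\vee(y\wedge z)$ exactly as needed; your justification that minimal refinements exist in a finite lattice (pass to antichains, on which $\ll$ is a partial order, and descend) is also sound.

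The forward direction, however, has a genuine gap at precisely the step you flag, and it is not mere bookkeeping. Two problems. First, the iterated form of $(SD_\vee)$ at $t$ yields conclusions of the shape $t=u\vee(v_1\wedge\dots\wedge v_n)$ from hypotheses $t=u\vee v_i$ for all $i$; your target identity $t=x\vee\bigvee_{y\in X_2}\bigl(y\wedge\bigvee(X_1\setminus\{x\})\bigr)$ has the shape ``$x$ join a \emph{join} of meets,'' and the hypotheses needed to feed the iterated law (e.g.\ $t=x\vee y$ for each $y\in X_2$) are not available. Second, even granting that identity, it does not produce a $\ll$-refinement of $X_1$: each element $y\wedge\bigvee(X_1\setminus\{x\})$ lies below the join of $X_1\setminus\{x\}$, not below any single member of $X_1$, so the mutual-refinement conclusion $X_1\ll X_2\ll X_1$ does not follow. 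A correct argument runs differently: for a $\ll$-minimal $A$ and $a\in A$, put $c=\bigvee(A\setminus\{a\})<t$; iterated $(SD_\vee)$ at $t$ makes $V=\{v:c\vee v=t\}$ meet-closed, and minimality of $A$ plus join-irreducibility of $a$ (through its unique lower cover $a_*$, for which $c\vee a_*<t$) forces $a=\bigwedge V$. One then shows by induction on $|B|$ that every join representation $B$ of $t$ contains some $b$ with $c\vee b=t$, using $(SD_\vee)$ at $t$ to replace $a$ by $a\wedge b_1\leq a_*$ in the failing case; hence $a\leq b$, so $A\ll B$, and two $\ll$-minimal representations mutually refine and therefore coincide. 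You should either supply this argument or simply cite \cite{JK62} as the paper does.
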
 

We will call closure system $\< S, \phi\>$ \emph{join-semidistributive} if all elements of $\op{Cl}(S, \phi)$ are such. 
As was pointed in section \ref{kbas},  the premises of implications in $K$-basis, in its non-binary part, are associated with the $\ll$-minimal join-representation of essential elements in the closure lattice. 

\begin{prop}\label{UC} Let standard closure system $\< S, \phi\>$ be \jsd\!. Then
\begin{itemize}
\item[(1)] $\< S, \phi\>$ is an $UC$-closure system;
\item[(2)] $\< S, \phi\>$ has unique $K$-basis.
\end{itemize}
\end{prop}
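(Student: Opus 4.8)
The plan is to derive both statements from the J\'onsson--Kiefer theorem (Theorem~\ref{JK62}), using Proposition~\ref{12} as the bridge between the order-theoretic side (minimal order generators of a critical set) and the lattice side ($\ll$-minimal join representations of the corresponding essential element). The one nontrivial ingredient is an auxiliary claim linking a critical set to its generators through the saturation operator: \emph{if $C\in\mathcal{C}$ has a minimal order generator $C_K$, then $C=\sigma(C_K)$.}

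I would prove this claim first. Write $C_K=\op{max}(X_n)$ for a minimal $\geq_\phi$-ideal $X_n\subseteq C$ with $\phi(X_n)=\phi(C)=:U$, as produced by Proposition~\ref{min gen}; then $\phi(C_K)=\phi(X_n)=U$ and $C_K\subseteq C$. Since $C$ is quasi-closed it is fixed by $\sigma$, so monotonicity of $\sigma$ gives $\sigma(C_K)\subseteq\sigma(C)=C\subsetneq U$, the last inclusion being strict because $C$ is not closed. Hence $\sigma(C_K)$ cannot be closed: a closed set containing $C_K$ must contain $\phi(C_K)=U$, contradicting $\sigma(C_K)\subsetneq U$. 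So $\sigma(C_K)$ is quasi-closed, and $\phi(\sigma(C_K))=U$ because $C_K\subseteq\sigma(C_K)\subseteq C$. Thus $\sigma(C_K)$ is a quasi-closed subset of $C$ with the same closure as $C$, and minimality of the critical set $C$ forces $\sigma(C_K)=C$.

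With the claim in hand, part~(1) is immediate. Suppose $C_1,C_2\in\mathcal{C}$ with $\phi(C_1)=\phi(C_2)=U$, and pick minimal order generators $R_i$ of $C_i$, which exist by Proposition~\ref{min gen}. By Proposition~\ref{12} each $R_i$ is a $\ll$-minimal join representation of $U$ in $\op{Cl}(S,\phi)$; as that lattice is join-semidistributive, Theorem~\ref{JK62} says $U$ has a unique such representation, so $R_1=R_2=:R$, and then the claim gives $C_1=\sigma(R)=C_2$. Hence distinct critical sets have distinct closures, i.e., $\langle S,\phi\rangle$ is a $UC$-system. For part~(2), recall that $\Sigma_C$ is unique and that a $K$-basis replaces each $(C\rto Y_C)\in\Sigma_C$ by $(C_K\rto\op{max}(Y_C))$ with $C_K$ a minimal order generator of $C$; the conclusion $\op{max}(Y_C)$ is uniquely determined, so it suffices to see that each critical $C$ has a unique minimal order generator. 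But by Propositions~\ref{min gen} and~\ref{12} every minimal order generator of $C$ is a $\ll$-minimal join representation of $\phi(C)$, at least one exists, and Theorem~\ref{JK62} makes the $\ll$-minimal representation of $\phi(C)$ unique; so there is exactly one minimal order generator of $C$, and the $K$-basis is unique.

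The main obstacle is the auxiliary claim $C=\sigma(C_K)$: reconciling the $\subseteq$-minimality built into the definition of a critical set with the $\ll$-minimality of its associated join representations, which requires careful bookkeeping between the operators $\phi$ and $\sigma$. Everything else is a direct appeal to Theorem~\ref{JK62}.
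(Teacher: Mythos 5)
Your proof is correct and takes essentially the same route as the paper: both parts reduce to the uniqueness of $\ll$-minimal join representations (Theorem~\ref{JK62}) via Proposition~\ref{12}, with the identity $\sigma(C_K)=C$ tying the unique representation back to the critical set. The paper simply asserts $\sigma(C_K)=C$ without argument, so your careful verification of that auxiliary claim is the only difference, and it is a sound one.
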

\begin{proof}
(1) Fix any essential set $Y$, and let $C$ be any critical subset with $\phi(C)=Y$. Then $C$ should contain some minimal order generator $C_K$, and $\sigma(C_K)=C$. On the other hand, $C_K$ gives a $\ll$-minimal representation of $\phi(C)=Y$, and this representation is unique in closure systems with $(SD_\vee)$. Hence, $C=\sigma(C_K)$ is unique critical set associated with the essential element $Y$.

(2) Since, by Theorem \ref{JK62}, every element of $\op{Cl}(S, \phi)$ has a unique $\ll$-minimal join representation,
apply Proposition \ref{12} to conclude that the $K$-basis of the associated standard closure system must be unique. 
\end{proof}

For both statements in Proposition \ref{UC}, $(SD_\vee)$ is a sufficient condition. It is easy to show that it is not necessary. Example \ref{2Kbases} provides an $UC$-system which has two $K$-bases, and hence is not \jsd\!.
Even if the closure system is both $UC$ and has a unique $K$-basis, which guarantees that $(SD_\vee)$ holds at all essential elements, it does not imply that $(SD_\vee)$ holds at non-essential elements. 

\begin{exm}\label{SD+fails}
Take a closure system on $S=\{a,b,c,d\}$ defined by implications $\Sigma_C=\{ac \rto b, bd\rto c\}$. There are two essential elements $\phi (ac),\phi (bd)$, and $SD_\vee$ holds at both. In particular, the $K$-basis is unique and coincides with $\Sigma_C$. Nevertheless,  $(SD_\vee)$ fails at the non-essential element $\phi(abcd)=1$: $(b\vee c)\vee a=(b\vee c)\vee d=1$, but $(b\vee c)\vee (a\wedge d)=b\vee c$.\\
\end{exm}

This leaves us with the following open questions.

\begin{problem}
\end{problem}
\begin{itemize}
\item[(A)] Is there an algorithm that allows us to recognize in polynomial time whether the system has a unique $K$-basis?
\item[(B)] Is there an effective algorithm that allows us to recognize whether a closure system $\< S, \phi\>$ is join-semidistributive, given its canonical basis?
\end{itemize}
\vspace{0.2in}

A positive answer to question (A), even in $UC$-system, does not automatically imply the positive answer to (B), as Example \ref{SD+fails} demonstrates. 
We will address lattice theoretical aspects of $UC$-systems in a separate paper \cite{AN12}.



At the end of this section, we discuss a new binary part for \jsd\ closure systems.

Recall from Lemma \ref{bin opt} that any implication $(x\rto A)\in \Sigma^b$ can be replaced by $x\rto B$ with  $\phi(B)=\phi(\{x\})\setminus \{x\}$.

In the most closure systems we treated so far, the binary part was built out of the cover relation of the order $\geq_\phi$.
Precisely, $B=\max_{\geq_\phi} (\phi(\{x\})\setminus \{x\})$, for every $(x\rto B)\in \Sigma^b$. In \jsd\ closure systems, there is another natural choice for $B$.

\begin{df} We will call set of implications $\Sigma_F$ the $F$-\emph{basis} of a \jsd\ standard closure system, if 
\begin{itemize}
\item[(1)] $\Sigma_F^{nb}=\Sigma_K^{nb}$;
\item[(2)]$\Sigma_F^b=\{x\rto B: B \text{ is a minimal \emph{order} generator of } \phi(\{x\})\setminus \{x\}\}$.
\end{itemize}
\end{df}

We note that $F$-basis of any \jsd\ closure system is unique. We saw in Example \ref{cover} that binary part based on the cover relation $\succ_\phi$ might be redundant. The advantage of the binary part of the $F$-basis is that it possesses a weak form of minimality. Recall that $\Sigma_u^b$ denotes the unit expansion of the implications $\Sigma^b$.

\begin{prop}\label{irr}
 $\Sigma^b_u$ of the $F$-basis $\Sigma_F$ for a  \jsd\ closure system $\<S, \phi\>$ is non-redundant. 
\end{prop}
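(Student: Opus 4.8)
The plan is to argue by contradiction, using only that every binary implication of $\Sigma_F$ has the form $x\rto B_x$ with $B_x$ a minimal order generator of $A_*:=\phi(\{x\})\setminus\{x\}$; the hypothesis $(SD_\vee)$ enters merely to make ``the'' $F$-basis well defined (the argument below works for any choice of minimal order generators). Suppose some unit implication $x\rto y_0$ of $\Sigma^b_u$ is redundant, i.e.\ $y_0$ has an inference from $\{x\}$ using $\Sigma':=\Sigma^b_u\setminus\{x\rto y_0\}$ (Proposition \ref{inference}). Fix the minimal $\geq_\phi$-order ideal $X\subseteq A_*$ with $\phi(X)=A_*$ and $\op{max}(X)=B_x$, so that $y_0\in B_x$. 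The goal is to deduce $y_0\in\phi(B_x\setminus\{y_0\})$ and contradict the minimality of $X$.

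First I would perform surgery on the inference. Write it as $\{a_1\}\rto b_1,\dots,\{a_n\}\rto b_n$ (all premises are nonempty singletons); then $a_1=x$, $b_n=y_0$, and $a_k\in\{x,b_1,\dots,b_{k-1}\}$ for each $k$. The only implications of $\Sigma'$ with premise $x$ are the $x\rto y$ with $y\in B_x\setminus\{y_0\}$, so every step with $a_k=x$ has $b_k\in B_x\setminus\{y_0\}$, and the last step has $a_n\ne x$ (otherwise $x\rto y_0\in\Sigma'$). Deleting every step with $a_k=x$ leaves a valid $\Sigma^b_u$-inference of $y_0$ from the set $B_x\setminus\{y_0\}$: each premise of a retained step is either an element of $B_x\setminus\{y_0\}$ (when it was the conclusion of a deleted step) or is produced by an earlier retained step, and the final step is retained. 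By Proposition \ref{inference} this yields $y_0\in\phi(B_x\setminus\{y_0\})$.

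It remains to contradict this. Since $y_0\in\op{max}(X)$, the set $X\setminus\{y_0\}$ is again a $\geq_\phi$-order ideal, properly contained in $X$ and contained in $A_*$; by the minimality of $X$ among order ideals with closure $A_*$ we get $\phi(X\setminus\{y_0\})\subsetneq A_*$. If $y_0$ were in $\phi(X\setminus\{y_0\})$, then $\phi(X\setminus\{y_0\})$ would be a closed set containing $X\setminus\{y_0\}$ together with $y_0$, hence containing $X$, hence equal to $\phi(X)=A_*$ — a contradiction. So $y_0\notin\phi(X\setminus\{y_0\})$, and since $B_x\setminus\{y_0\}=\op{max}(X)\setminus\{y_0\}\subseteq X\setminus\{y_0\}$, also $y_0\notin\phi(B_x\setminus\{y_0\})$, contrary to the previous paragraph. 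Hence no unit implication of $\Sigma^b_u$ is redundant.

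The step that needs the most care is the surgery: the point is that, \emph{within} $\Sigma^b_u$, the steps with premise $x$ only ever re-derive elements of the initial set $B_x\setminus\{y_0\}$ and the final step cannot have premise $x$, so deleting those steps cannot break the inference. This genuinely uses that $\Sigma^b_u$ is considered in isolation: $y_0$ does lie in $\phi(\{x\}\cup(B_x\setminus\{y_0\}))$, so the non-binary part of $\Sigma_F$ would in general let $y_0$ be recovered once $x$ is present — what fails is only that the binary unit implications alone cannot do it. Everything else is routine unwinding of the definitions of $\geq_\phi$-order ideal, $\op{max}$, and minimal order generator.
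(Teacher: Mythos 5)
Your argument is internally sound as far as it goes, and your closing paragraph (showing $y_0\notin\phi(B_x\setminus\{y_0\})$ from the minimality of the order ideal $X$) is exactly the minimality step the paper also relies on. But you have proved a weaker statement than the one intended. You only rule out an inference of $y_0$ from $\{x\}$ using the \emph{other binary unit implications}, i.e.\ you show $\Sigma^b_u$ is non-redundant as a set of implications considered in isolation. The proposition, as the paper proves and later uses it (for the non-redundancy of the unit expansion of $\Sigma_{FOE}$), asserts that $x\rto y_0$ cannot be deleted from the \emph{full} basis $\Sigma_F$ without changing the closure system — that is, $y_0\notin\phi_{\Sigma}(\{x\})$ where $\Sigma=\Sigma_F\setminus\{x\rto y_0\}$ contains the non-binary implications as well. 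Your final remark concedes exactly this gap, and moreover asserts something false: you say the non-binary part of $\Sigma_F$ ``would in general let $y_0$ be recovered once $x$ is present.'' That $y_0\in\phi(\{x\}\cup(B_x\setminus\{y_0\}))$ only says the implication is \emph{valid}; it does not make it \emph{derivable} from the remaining implications, and the whole content of the proposition is that it is not.

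The missing ingredient is the regularity of $\Sigma_F$ (Definition \ref{regD}): no implication of $\Sigma_F^{nb}$ has the form $\{x\}\cup F\rto D$ with $F\subseteq\phi(\{x\})$. Since any inference from $\{x\}$ only ever produces elements of $\phi(\{x\})$, a non-binary implication with $x$ in its premise can never fire; and every other implication that fires has its premise, hence its conclusion, inside the closed set $\phi(B_x\setminus\{y_0\})$. An induction along the inference then gives $\phi_{\Sigma}(\{x\})=\{x\}\cup\phi(B_x\setminus\{y_0\})$, which omits $y_0$ by your own final paragraph. This is the paper's route; your surgery on the purely binary inference can be kept, but it must be supplemented by this regularity argument to dispose of the non-binary implications.
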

\begin{proof}
We need to show that none of the implications in $\Sigma^b_u$ of the $F$-basis can be removed. 

Suppose that $a\rto B$ is a binary implication in $\Sigma_F$, where $B=\{b_0,\dots, b_n\}$ is a minimal order generator of $\phi(\{a\})\setminus \{a\}\}$.
Consider a closure system $\Sigma =\Sigma_F\setminus \{a\rto b_0\}$. Let $\phi_\Sigma$ be an associated closure operator.
\begin{claim}
$\phi_\Sigma(\{a\})=\phi (\{b_1,\dots, b_n\})\cup\{a\}$.
\end{claim}
Indeed, to show $\supseteq$, we note that $b_1,\dots, b_n, a \in \phi_\Sigma(\{a\})$. All the implications of $\Sigma_F$ that can be applied to elements of $\phi (\{b_1,\dots, b_n\})$ are also in $\Sigma$, due to the fact that $a \not \in \phi (\{b_1,\dots, b_n\})$.

Now take any $w \in \phi_\Sigma(\{a\})$, with $w \neq a$. Let $\sigma_1,\dots,\sigma_k \in \Sigma$ be a $\Sigma$-inference of $w$ from $\{a\}$. We may assume that $\sigma_1=(a\rto \{b_1,\dots,b_n\}$ and $a$ does not appear in a premise of any implication $\sigma_2,\dots, \sigma_k$, since $\Sigma_F$ is regular. Hence, $A_i,B_i \subseteq \phi(\{b_1,\dots, b_n\})$, for all $i\geq 2$, by the inductive argument.  This implies $w \in \phi (\{b_1,\dots, b_n\})$, which proves the Claim.

It immediately follows from the Claim that $\phi_\Sigma(\{a\}) \subset \phi(\{a\})$, since $b_0 \not \in \phi (\{b_1,\dots, b_n\})$, due to the $\ll$-minimality of $B$ for $\phi(\{a\})\setminus \{a\}\}$.
\end{proof}

We note that there is no guarantee that $|B|$ in an implication $(a\rto B)\in \Sigma_F$ for some \jsd\ closure system is minimal with the property $\phi(B)=\phi(\{a\})\setminus \{a\}\}$.  Examples of such systems will be shown in section \ref{NP}.

\section{$E$-basis and its connection to the canonical basis}\label{Ebas}

We introduced the $E$-basis in \cite{ANR11} as a finer version of the $D$-basis for closure systems without $D$-cycles.
In section \ref{D-rel} we showed that one can recognize whether a closure system is without $D$-cycles from its canonical basis, in time polynomial of the size of that basis. 

In this section, we will establish the connection between the $E$-basis and the canonical basis. 
Since every finite lattice without $D$-cycles is join-semidistributive, the results of section \ref{Kbas SD} are applicable here. In particular, the $K$-basis is unique for closure systems without $D$-cycles.

We will proceed as follows. First, we prove that the $E$-basis in its aggregated form is a refinement of the canonical basis. Secondly, we establish that the left sides of the $E$-basis and the $K$-basis are the same. Thirdly, we will analyze possible differences in the right sides of the non-binary parts of the $K$-basis and the $E$-basis. This will allow us to create a blend of the two, which we will call the optimized $E$-basis. There is an effective algorithm to produce the optimized $E$-basis from the canonical basis. 

First, recall the definition of the $E$-basis from \cite{ANR11}.

Let $\< S, \phi\>$ be a standard closure system without $D$-cycles.
For every $x \in S$, let $M(x)=\{Y\subseteq S: Y \text{ is a minimal cover of } x\}$. 
The family $\phi(M(x))=\{\phi(Y): Y \in M(x)\}$ is ordered by set containment, so we can consider its $\subseteq$-minimal elements.
Let $M^*(x)=\{Y \in M(x): \phi(Y) \text{ is $\subseteq$-minimal in } \phi(M(x))\}$.

The set of implications $\Sigma_E=\Sigma_E^b \cup \Sigma_E^n$:
\begin{itemize}
\item[(1)] $\Sigma_E^b= \{y \rightarrow x : y\succ_\phi x\}$,
\item[(2)] $\Sigma_E^{nb}=\{X \rightarrow x: X \in M^*(x)\}$
\end{itemize} 
is called the $E$-\emph{basis} of the closure system. 
Recalling the definition of the $D$-basis given in section \ref{D-rel}, we note that $\Sigma_E\subseteq \Sigma_D$.

It was noted in \cite{ANR11} that $\Sigma_E$ might not form a basis of a closure system in general, so the requirement of no $D$-cycles is essential.

\begin{exm}\label{Co4}
Consider a closure system given by the basis $\Sigma=\{ac\rto b, bd\rto c, ad\rto bc\}$, which is the canonical basis, at the same time the $K$-basis of the system. It is easy to check that this system is \jsd\!, but it has the $D$-cycle $bDcDb$. We have $\{a,c\}, \{a,d\} \in M(b)$, and $\phi(\{a,c\})\subset \phi(\{a,d\})$, so that only $\{a,c\}$ belongs to $M^*(b)$. Symmetrically, only $\{b,d\}$ belongs to $M^*(c)$. Hence, the set of implications $\Sigma_E$ for this system is $\{ ac \rto b, bd\rto c\}$, which is not a basis, since $\{a,d\}$ would be a closed set with respect to $\Sigma_E$. 
\end{exm}

Let\/ $\Sigma_E^{ag}$ be the aggregated $E$-basis and $\Sigma_C$ be the canonical basis of $\< S, \phi\>$. 
As usual, $\sigma$ denotes the saturation operator associated with the closure operator $\phi$.
\begin{thm}\label{mainE}
Let $\langle S, \phi\rangle$ be a standard closure system without $D$-cycles. Then there is a one-to-one mapping $f:\Sigma_E^{ag}\longrightarrow \Sigma_C$ such that $f(A\rto A')=(\sigma(A)\rto \phi(A))$ with $A'\subseteq \phi(A)\setminus \sigma(A)$. In particular, the aggregated $E$-basis is a refinement of $\Sigma_C$: $s(\Sigma_E^{ag})\leq s(\Sigma)$.
\end{thm}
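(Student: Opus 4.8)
The plan is to build the map $f$ explicitly and verify its properties using the results already established in Sections~\ref{D-rel} and~\ref{Kbas SD}, together with the basic machinery of $\Sigma$-inferences (Proposition~\ref{inference}) and the saturation operator (Lemma~\ref{2-3-extended}). Since $\langle S,\phi\rangle$ has no $D$-cycles, it is \jsd, so by Proposition~\ref{UC} it is a $UC$-system and in particular each essential element $\phi(C)$ has a unique critical set $C$; this uniqueness is what will make $f$ well defined and injective. I would first dispatch the binary part: for $y\succ_\phi x$ the singleton $\{y\}$ is the unique critical set with closure $\phi(\{y\})$ (since the system is standard), so $\sigma(\{y\})=\{y\}$, $\phi(\{y\})=\phi(\{y\})$, and the aggregation of all $y\to x$ with $y\succ_\phi x$ is precisely the binary implication $y\to(\phi(\{y\})\setminus\{y\})$ of $\Sigma_C$, with $A'=\{x:y\succ_\phi x\}\subseteq\phi(\{y\})\setminus\{y\}=\phi(A)\setminus\sigma(A)$. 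So $f$ restricted to $\Sigma_E^{b,ag}$ is the obvious bijection onto $\Sigma_C^b$.

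For the non-binary part, take an aggregated implication $A\to A'$ of $\Sigma_E^{ag}$, so $A$ is one of the premises occurring in $\Sigma_E^{nb}$, i.e.\ $A\in M^*(x)$ for every $x\in A'$ (and $A'$ is exactly the set of such $x$). The first key step is to show $\sigma(A)$ is a critical set, i.e.\ $\sigma(A)\in\mathcal C$; this is where I expect to lean on Theorem~\ref{W}(I) applied to a non-redundant sub-basis, or directly on the fact that $\Sigma_E$ is a non-redundant basis (it is contained in $\Sigma_D$ and known from \cite{ANR11} to be non-redundant for systems without $D$-cycles). Knowing $\sigma(A)\in\mathcal Q$, and using that $A$ is a minimal cover of each $x\in A'$, one gets that no proper quasi-closed subset of $\sigma(A)$ has closure $\phi(A)$, hence $\sigma(A)$ is in fact critical. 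Then I set $f(A\to A')=(\,\sigma(A)\to\phi(\sigma(A))\setminus\sigma(A)\,)\in\Sigma_C$; note $\phi(\sigma(A))=\phi(A)$. It remains to check $A'\subseteq\phi(A)\setminus\sigma(A)$: membership in $\phi(A)$ is clear since $A\to x$ holds for $x\in A'$; and $x\notin\sigma(A)$ because, by $D$-acyclicity and the minimal-cover property, $x$ cannot lie in the quasi-closed set generated by its own minimal cover $A$ (otherwise $A\to x$ would be derivable from strictly smaller data, contradicting minimality of the cover — this is essentially the argument of Lemma~\ref{DG and D}).

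The second key step is injectivity of $f$. Suppose $A_1\to A_1'$ and $A_2\to A_2'$ both map to the same canonical implication $C\to\phi(C)\setminus C$, so $\sigma(A_1)=\sigma(A_2)=C$ and $\phi(A_1)=\phi(A_2)=\phi(C)$. Both $A_1$ and $A_2$ are then $\ll$-minimal join representations of $\phi(C)$ in the closure lattice (they are minimal covers whose closure is $\subseteq$-minimal in the appropriate family, hence $\ll$-minimal), and by Theorem~\ref{JK62} a \jsd\ element has a \emph{unique} $\ll$-minimal representation, forcing $A_1=A_2$; since in an aggregated $E$-basis the conclusion is determined by the premise ($A_i'=\{x:A_i\in M^*(x)\}$), also $A_1'=A_2'$. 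This gives injectivity, and combined with the fact that $f$ hits every binary implication of $\Sigma_C$ and is length-non-increasing on each implication ($|\sigma(A)|\ge|A|$? — no, the premise \emph{grows} under $\sigma$, so one must instead argue about sizes via the refinement structure), we conclude $s(\Sigma_E^{ag})\le s(\Sigma_C)$ — wait, this is the delicate point.

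The main obstacle is precisely the final size inequality $s(\Sigma_E^{ag})\le s(\Sigma_C)$: a priori $\sigma(A)$ can be strictly larger than $A$, so $f$ is \emph{not} literally a refinement in the sense of Section~\ref{kbas} (which requires $X_C^*\subseteq C$). The resolution I plan to use is that the $E$-basis premises coincide with the $K$-basis premises — this is Theorem~\ref{tr}'s circle of ideas, and is in fact the ``Secondly'' bullet the authors announce just before the theorem (``the left sides of the $E$-basis and the $K$-basis are the same''). Granting that, $A=A_K\subseteq C=\sigma(A)$, so $f$ \emph{is} a refinement of $\Sigma_C$ in the technical sense: premise $A_K\subseteq C$, conclusion $A'\subseteq Y_C$, and $\phi(A_K)=\phi(C)$. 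Then $s(\Sigma_E^{ag})\le s(\Sigma_C)$ is immediate since the refinement replaces each premise and conclusion by a subset. I would therefore structure the proof to first prove the ``same left sides'' claim (or cite the $K$-basis uniqueness from Proposition~\ref{UC} plus the characterization of $K$-basis premises as $\ll$-minimal representations, Proposition~\ref{12}, matched against the $\subseteq$-minimality built into $M^*(x)$), and only then conclude the refinement statement and its size corollary.
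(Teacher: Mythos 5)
Your proposal is correct in substance and isolates exactly the two facts the paper's proof turns on: (a) for each $(A\rto x)\in\Sigma_E^{nb}$ one has $x\in\phi(A)\setminus\sigma(A)$, which the paper proves as Lemma \ref{saturation} by iterating the operator $q$ defining $\sigma$ and using the $\subseteq$-minimality of $\phi(A)$ built into $M^*(x)$ (your sketch via ``a smaller cover would exist'' is the same argument); and (b) distinct $E$-premises have distinct closures because each is the $\ll$-minimal join representation of its closure, unique by join-semidistributivity (the paper's Lemma \ref{one} plus Theorem \ref{JK62}), which gives injectivity. Where you diverge is the packaging: the paper runs A.~Day's canonical-basis algorithm on $\Sigma_E$ and observes that step (1) produces $\sigma(A)\rto\phi(A)$ with no trivial implications (by (a)) and no collapses between distinct premises (by (b)), so the surviving implications are exactly $\Sigma_C$ and the unit-level map $A\rto x\mapsto\sigma(A)\rto x$ is injective into $\Sigma_u$. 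This certifies criticality of $\sigma(A)$ for free from the correctness of Day's algorithm, whereas you verify it by hand via Theorem \ref{W}(I) and the $UC$/$K$-basis machinery. Your route works, but note two avoidable soft spots. First, you do not need non-redundancy of $\Sigma_E$ to see $\sigma(A)\in\mathcal Q$: by definition $\sigma(A)$ is quasi-closed or closed, and (a) already shows $\sigma(A)\subsetneq\phi(A)$, so it is not closed; criticality then follows since the unique critical $C$ with $\phi(C)=\phi(A)$ is an order ideal containing the unique $\ll$-minimal representation $A$, whence $\sigma(A)\subseteq\sigma(C)=C\subseteq\sigma(A)$. Second, your worry about the final size inequality is a false alarm: the refinement goes in the direction $A\subseteq\sigma(A)=C$, which is automatic because $\sigma$ is extensive, and $A'\subseteq\phi(A)\setminus\sigma(A)=Y_C$ is exactly (a); the detour through ``$E$-premises equal $K$-premises'' is true (it is the paper's next lemma) but not needed here.
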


In fact, the result follows, if we argue over $\Sigma_E$ in its original unit form, and the unit expansion $\Sigma_u$ of $\Sigma_C$, and show that there is one-to-one mapping $f^*:\Sigma_E\longrightarrow \Sigma_u$ such that $f^*(A\rto x)=(\sigma(A)\rto x)$. 

We now describe a well-known algorithm of producing the canonical basis due to A. Day~\cite{D92}.


Given any basis $\Sigma=\{A_i\rto B_i: i\leq k\}$ of a closure system $\langle S, \phi\rangle$, the procedure of obtaining the canonical basis $\Sigma_C$ consists of two steps:
\begin{itemize}
\item[(1)] replace each $A_i\rto B_i$ by $\sigma(A_i)\rto \phi(A_i)$;
\item[(2)] if there are two implications $C\rto F$ and $D\rto F$ obtained in the first step, and $C\subseteq D$, then remove $D\rto F$; also, remove all implications of the form $F\rto F$.
\end{itemize}

The unit expansion $\Sigma_u$ of $\Sigma_C$ can now be written from all remaining implications and will consists of $\sigma(A_i)\rto x$, where $A_i$ is some premise in the original basis and $x \in \phi(A_i)\setminus\sigma(A_i)$.

In the proof of Theorem \ref{mainE} below, we will apply this algorithm to a given basis $\Sigma_E$ of some closure system without $D$-cycles. First, we prove several auxiliary statements. We need to recall that the saturation operator $\sigma$ associated with a closure operator $\phi$ is defined as $\sigma(X)=\bigcup_{k\geq 1} q^k(X)$, where 
$q(X)= X\cup \bigcup\{\phi(Y): Y\subseteq X \text{ and } \phi(Y)\subset \phi(X)\}$.

\begin{lem}\label{saturation}
If $(A\rto x)\in \Sigma^{nb}_E$, then $x \in \phi(A)\setminus \sigma(A)$.
\end{lem}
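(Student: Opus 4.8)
The statement asserts that for a non-binary implication $A\rto x$ of the $E$-basis, we have $x\in\phi(A)$ but $x\notin\sigma(A)$. The first inclusion is immediate: if $A\in M^*(x)$ then in particular $A$ is a (minimal) cover of $x$, so $x\in\phi(A)$, and $x\notin A$ since $A$ is a non-trivial cover, hence $A\rto x$ is genuinely non-binary. The real content is $x\notin\sigma(A)$, and this is where I would concentrate.

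The plan is to argue by contradiction: suppose $x\in\sigma(A)$. Using the formula $\sigma(A)=\bigcup_{k\geq 1}q^k(A)$ with $q(X)=X\cup\bigcup\{\phi(Y):Y\subseteq X,\ \phi(Y)\subset\phi(X)\}$, and the fact that $x\notin A$, there must be a first stage at which $x$ enters, so there is a set $Y\subseteq A$ with $\phi(Y)\subset\phi(A)$ and $x\in\phi(Y)$. (Here one uses that $\sigma(A)\subseteq\phi(A)$, so all the relevant $\phi(Y)$'s sit inside $\phi(A)$, and that $\phi(A)=\phi(\sigma(A))$; a small induction on $k$ shows $q^k(A)\subseteq\phi(A)$ with $\phi(q^k(A))=\phi(A)$, so at the step where $x$ first appears the witnessing $Y$ can be taken inside $A$ itself, or at worst inside an intermediate $q^{k-1}(A)$ with $\phi$ still equal to $\phi(A)$ — I would handle this by replacing $Y$ by a $\ll$-minimal sub-cover of $x$ contained in it.) Now shrink $Y$ to a minimal cover $Y'\subseteq Y$ of $x$; then $Y'\in M(x)$ and $\phi(Y')\subseteq\phi(Y)\subset\phi(A)$, so $\phi(Y')$ is strictly below $\phi(A)$ in $\phi(M(x))$. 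Since $A\in M^*(x)$, i.e.\ $\phi(A)$ is $\subseteq$-minimal in $\phi(M(x))$, this is a contradiction unless $\phi(Y')=\phi(A)$, which is excluded by $\phi(Y')\subset\phi(A)$. Hence $x\notin\sigma(A)$.

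The one delicate point — and the place I expect to spend the most care — is the bookkeeping in the first paragraph above: making sure that when $x$ first enters $\sigma(A)$ via the operator $q$, the witnessing set $Y$ really can be taken to be a subset of $S$ with $\phi(Y)\subsetneq\phi(A)$ and $x\in\phi(Y)$, rather than merely a subset of some intermediate stage $q^{k-1}(A)$ whose $\phi$-closure might a priori not be controlled. The clean way to do this is to prove by induction on $k$ that $q^k(A)\subseteq\phi(A)$ and that every element of $q^k(A)\setminus A$ lies in $\phi(Z)$ for some $Z\subseteq S$ with $\phi(Z)\subsetneq\phi(A)$; the inductive step uses idempotence and isotonicity of $\phi$ together with the observation that if $\phi(Y)\subset\phi(q^{k-1}(A))=\phi(A)$ and $Y\subseteq q^{k-1}(A)$, then each generator of $Y$ outside $A$ is itself swallowed by a smaller closure, so by transitivity of strict containment one still lands strictly below $\phi(A)$. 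Once this is in place, the contradiction with the $\subseteq$-minimality defining $M^*(x)$ is immediate, and the lemma follows.

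Alternatively, and perhaps more cheaply, I could invoke Lemma~\ref{2-3-extended} (Wild's lemma): if $x\in\sigma(A)$ then, since $\phi(A)\subseteq\phi(A)$ trivially, the implication $A\rto\sigma(A)\ni x$ follows from $\Sigma\setminus\Sigma'$ where $\Sigma'$ collects the implications with premise-closure $\phi(A)$; combined with the structure of minimal covers this again forces a cover of $x$ whose closure is strictly smaller, contradicting $A\in M^*(x)$. Whichever route is taken, the crux is the same: $x$ entering the saturation of $A$ would manufacture a cover of $x$ strictly below $\phi(A)$, which $M^*(x)$ forbids.
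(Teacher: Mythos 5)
Your proposal is correct and follows essentially the same route as the paper: the paper also shows $x\notin q(A)$ by observing that a witness $Y\subseteq A$ with $x\in\phi(Y)$ and $\phi(Y)\subset\phi(A)$ would refine to a minimal cover $Y'\ll Y$ of $x$ with $\phi(Y')\subset\phi(A)$, contradicting $A\in M^*(x)$, and then iterates the argument with $q(A)$ in place of $A$ using $\phi(q(A))=\phi(A)$. The ``bookkeeping'' you worry about is handled in the paper exactly by this induction on $k$, so your first (direct) route matches the published proof.
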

\begin{proof} It is enough to show that $x \not \in \sigma(A)$. 
We have $x \not \in A$. Also, $x\not \in \phi(Y)$, for any $Y\subseteq A$ with $\phi(Y)\subset \phi(A)$. Indeed, otherwise, $Y$ would be a cover for $x$, so we would be able to find a minimal cover $Y'$ with $Y'\ll Y$, hence, $\phi(Y') \subset \phi(A)$. This implies that $A\not \in M^*(x)$, hence, $A\rto x$ cannot be included in $E$-basis. Thus, $x \not \in q(A)$. Apparently, $x\not \in q(A)$ with $\phi(q(A))=\phi(A)$, so we can apply the same argument, replacing $A$ by $q(A)$, to show that $x \not \in q^2(A)$. Proceeding with this inductive argument, we conclude that $x \not \in q^k(A)$, for all $x\geq 1$.
\end{proof}

\begin{lem}\label{one} 
If $(A\rto x)\in \Sigma_E$, then $A$ is a $\ll$-minimal join representation of $\phi(A)$.
\end{lem}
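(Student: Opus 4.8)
The plan is to unpack the definition of $\Sigma_E$ and reduce the claim to an argument about minimal covers. Suppose $(A \rto x) \in \Sigma_E$. If $A$ is a singleton, then the implication is binary, $A = \{y\}$ with $y \succ_\phi x$, and a singleton is trivially a $\ll$-minimal join representation of $\phi(A) = \phi(\{y\})$, since any $Y \ll \{y\}$ consists of elements below $y$ and cannot generate $y$ unless $y \in Y$. So assume $A$ is non-binary, i.e., $A \in M^*(x) \subseteq M(x)$, so $A$ is a minimal cover of $x$. I want to show $A$ is $\ll$-minimal among join representations of $U := \phi(A)$, meaning: whenever $Z \ll A$ and $\phi(Z) = U$, then $A \subseteq Z$.

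First I would observe that, since $A$ is a minimal cover of $x$, every $a \in A$ is $\geq_\phi$-maximal in $A$ (if $a_1 >_\phi a_2$ for two elements of $A$, then $A \setminus \{a_1\}$ together with... actually one checks $A$ minimal cover forces the elements to be an antichain, because replacing $a_1$ by nothing still covers $x$; more carefully, $A$ minimal means no $a_i$ can be deleted or replaced by strictly smaller join irreducibles, which in particular rules out $a_1 >_\phi a_2$ with both in $A$). Now suppose $Z \ll A$ with $\phi(Z) = U \ni x$. Then $Z$ is itself a cover of $x$ (it is non-trivial because $x \notin \phi(z)$ for any single $z \le a_i$, $a_i \ne x$, since the $a_i$ are join irreducible and... here one uses that $A$ is a minimal cover so $x \not\le a_i$, hence $x \ne a_i$ and $x \notin \phi(a_i)$ as $\phi(a_i)\setminus\{a_i\}$ is closed and below $a_i$). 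Since $Z$ refines $A$ and is a cover of $x$, the definition of minimal cover of $A$ for $x$ gives $A \subseteq Z$, which is exactly $\ll$-minimality of $A$ as a join representation of $U$.

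The one gap I would need to fill carefully is the passage from "$Z \ll A$, $\phi(Z) = \phi(A)$" to "$Z$ is a cover of $x$": I must check $x \not\le z$ for every $z \in Z$. This follows because each $z$ lies below some $a_i \in A$, and if $x \le z \le a_i$ then $x \le a_i$, contradicting that $A$ is a (non-trivial) minimal cover of $x$. I expect this to be the only subtle point; the rest is bookkeeping with the definitions of $M(x)$, $M^*(x)$, refinement $\ll$, and minimal cover. Note that we only need $A \in M(x)$ here, not the stronger $A \in M^*(x)$ — the $\subseteq$-minimality of $\phi(A)$ among the $\phi$-images plays no role in this lemma, only in later statements about the right sides. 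I would state the argument uniformly, handling the binary case as the degenerate instance where the cover is a singleton.
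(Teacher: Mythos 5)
Your proof is correct and follows essentially the same route as the paper's: reduce to the fact that $A$ is a minimal cover of $x$, observe that any $Z\ll A$ with $\phi(Z)=\phi(A)$ is again a (non-trivial) cover of $x$, and conclude $A\subseteq Z$ by the definition of minimal cover. You actually supply a detail the paper leaves implicit (checking $x\not\leq z$ for $z\in Z$), and your remark that only $A\in M(x)$, not $A\in M^*(x)$, is needed is accurate.
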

\begin{proof}
We may assume that $|A|>1$, since for binary implication the statement is trivial.

According to the definition of the $E$-basis, $A$ is a minimal cover of $x$. If there is a set $F\ll A$ such that $\phi(F)=\phi(A)$ and $A\not\subseteq F$, then $F$ will also be a cover of $x$, which contradicts the $\ll$-minimality of the cover $A$.

\end{proof}

\emph{Proof of Theorem.}
Apply the algorithm for obtaining the canonical basis to $\Sigma_E$. 
Apparently, $\Sigma_E^b\subseteq \Sigma_u^b$.
So we need to make sure the one-to-one mapping $f^*$ exists for non-binary implications.

According to Lemma \ref{one}, if $(A\rto x),(B\rto y)\in \Sigma_E$ and $A\not = B$, then $\phi(A)\not =\phi(B)$.
Thus, after applying the first step of the algorithm, we will not obtain any two implications $C\rto F$, $D\rto F$, with $C \neq D$.
Moreover, by Lemma \ref{saturation}, we will not have implications of the form $F\rto F$ after the first step.
Hence, the second step of the algorithm may be applied only to remove repeating implications, and they may occur here only when $A=B$. In this case $x\not = y$, and $x,y\in \phi(A)\setminus \sigma(A)$ by Lemma \ref{saturation}.
Therefore, $\sigma(A) \rto x$ and $\sigma(A)\rto y$ are in the unit expansion of the implication $\sigma(A)\rto\phi(A)$ from the canonical basis. Hence, the mapping $f^*(A\rto x) = (\sigma(A)\rto x)$ from $\Sigma_E$ to $\Sigma_u$ is one-to-one.
\emph{End of Proof.}\\

Now we show that the premises of implications in the $E$-basis and $K$-basis for a closure system without $D$-cycles are the same.

\begin{lem}
Let $\< S, \phi\>$ be a standard closure system without $D$-cycles with $E$-basis $\Sigma_E$ and $K$-basis $\Sigma_K$. For each critical set $C$, if $(C_E\rto Y_E)\in \Sigma_E^{ag}$ and $(C_K\rto Y_K)\in \Sigma_K$ with $\sigma(C_E)=\sigma(C_K)=C$, then $C_E=C_K$.
\end{lem}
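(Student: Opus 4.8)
The plan is to show that both $C_E$ and $C_K$ are $\ll$-minimal join representations of the essential element $\phi(C)$ in the closure lattice $\op{Cl}(S,\phi)$, and then invoke uniqueness of such a representation. For the $K$-basis side this is immediate: by Proposition~\ref{12}, every minimal order generator $C_K$ is a $\ll$-minimal join representation of $\phi(C)$. For the $E$-basis side, I would argue that $C_E = \op{max}_{\geq_\phi}(A)$ where $A\rto x$ is a unit implication in $\Sigma_E^{nb}$ with $\sigma(A)=C$; aggregation only unions right-hand sides with a common premise, so the premise $C_E$ of the aggregated implication equals the common premise $A$ of the underlying unit implications. By Lemma~\ref{one}, that premise $A$ is a $\ll$-minimal join representation of $\phi(A)=\phi(C)$. (One should check $\phi(A) = \phi(C)$: since $\sigma(A) = C$ and $\sigma$ agrees with $\phi$ on closures, $\phi(A) = \phi(\sigma(A)) = \phi(C)$.)

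Once both $C_E$ and $C_K$ are known to be $\ll$-minimal join representations of the same element $\phi(C)$, I would finish by applying Theorem~\ref{JK62} (the J\'onsson--Kiefer theorem): since a closure system without $D$-cycles is join-semidistributive, every element of $\op{Cl}(S,\phi)$ — in particular the essential element $\phi(C)$ — has a \emph{unique} $\ll$-minimal join representation. Hence $C_E = C_K$.

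One subtlety I would want to pin down carefully is the precise relationship between ``minimal cover of $x$'' (used in defining $\Sigma_E$), ``$\ll$-minimal join representation of $\phi(C)$'' (the lattice notion), and the actual premises after aggregation — i.e. that the set-theoretic object $A$ appearing as a premise in $\Sigma_E$ is genuinely an antichain in $(S,\geq_\phi)$ consisting of join irreducibles, so that the lattice-theoretic uniqueness statement applies to it verbatim. This should follow from the standard-system hypothesis (all elements of $S$ are join irreducible) together with minimality of the cover forcing the elements of $A$ to be $\geq_\phi$-incomparable; but it is the place where the argument could go wrong if, say, $A$ contained comparable elements and the ``$\ll$-minimal representation'' were really $\op{max}(A)$ rather than $A$ itself. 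I expect this reconciliation of the combinatorial and lattice-theoretic vocabularies to be the main (though not deep) obstacle; the uniqueness step via Theorem~\ref{JK62} is then a one-line conclusion.
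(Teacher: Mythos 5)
Your proposal is correct and follows essentially the same route as the paper: Proposition~\ref{12} for $C_K$, Lemma~\ref{one} for $C_E$, and uniqueness of the $\ll$-minimal join representation in a join-semidistributive lattice (Theorem~\ref{JK62}) to conclude $C_E=C_K$. The extra care you take about aggregation and the antichain structure of the premises is reasonable bookkeeping that the paper leaves implicit, but it does not change the argument.
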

\begin{proof}
According to Proposition \ref{12}, $C_K$ is a $\ll$-minimal join representation of $\phi(C)$, which is unique is \jsd\ closure systems. Due to Lemma \ref{one}, $C_E=C_K$.
\end{proof}

The following examples show that the conclusions of non-binary parts of the $E$-basis and $K$-basis might be different.

\begin{exm}\label{EO}
The $K$-basis may have smaller size than the $E$-basis.
\end{exm}
Consider the closure system on $S=\{a,b,c,d\}$ given by its canonical basis $\Sigma_C=\{d\rto cb, c\rto b, ab\rto dc\}$. This is also the $E$-basis. On the other hand, the $K$-basis optimizes the last implication to $ab\rto d$, since $d$ is the maximal element of $\{d,c\}$ with respect to the $\geq_\phi$-order.

\begin{exm}
The $E$-basis may have smaller size than the $K$-basis.
\end{exm}
Consider the closure system on $S=\{2,3,4,5\}$ defined by its canonical basis $\Sigma_C=\{2\rto 5,45\rto 23, 35\rto 2\}$. This is also the $K$-basis of the system. On the other hand, $2$ appears on the right side of two non-binary implications, in particular, both $\{4,5\}$ and $\{3,5\}$ are the minimal covers for $2$. Since $\phi(45)>\phi(35)$, only $35\rto 2$ appears in the $E$-basis. Thus, the $E$-basis has smaller size than the $K$-basis: $\Sigma_E=\{2\rto 5,45\rto 3, 35\rto 2\}$.

\begin{df}
We call $\Sigma_{OE}$ an optimized $E$-basis if every implication $A_E\rto B_E$ from the non-binary part of $E$-basis is replaced by $A_E\rto B_{OE}$, where $B_{OE}=\max_{\geq_\phi}(B_E)$.  
\end{df}

Note that the $E$-basis is \emph{ordered direct}, as was shown in Theorem 23 \cite{ANR11}. The optimized $E$-basis might not longer have this property. On the other hand, applying the ordered sequence of the $E$-basis to the optimized $E$-basis and concatenating the binary part again at the end will produce an ordered sequence. See the definition of the ordered sequence and further details in section 8 of \cite{ANR11}.\\

Let us demonstrate this on Example \ref{EO}. The optimized $E$-basis is the same as the $K$-basis: $\Sigma_{OE}=\{d\rto c, c\rto b, ab\rto d\}$. $\Sigma_{OE}$ is no longer direct, with this order of implications that is inherited from the order of the $E$-basis, since $\rho(ab)=abd\not = abcd= \phi(ab)$.
If we concatenate the binary part at the end, we obtain the following sequence:  $d\rto c, c\rto b, ab\rto d, d\rto c, c\rto b$. It is easy to verify that it is an ordered direct sequence. 

\begin{lem}
Given a closure system without $D$-cycles, it will take time $O(s^2(\Sigma_C))$ to obtain optimized $E$-basis from the canonical basis.
\end{lem}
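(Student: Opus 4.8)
The plan is to establish the time bound by going through the pipeline that produces the optimized $E$-basis from $\Sigma_C$, estimating the cost of each stage, and observing that each is $O(s^2(\Sigma_C))$ so the total is too. Throughout, write $k=s(\Sigma_C)$, $n=|S|$, $m=|\Sigma_C|$; note $n\le k$, $m\le k$, and also that the closure of any set can be computed in time $O(k)$ via forward chaining.

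The first stage is to produce the left sides of the non-binary implications of the $E$-basis, which by the preceding lemma coincide with the left sides $C_K$ of the $K$-basis. By Proposition~\ref{Ktime}, a $K$-basis — in particular all the minimal order generators $C_K$ for $C\in\mathcal C$ — can be obtained from $\Sigma_C$ in time $O(k^2)$. So after $O(k^2)$ steps we have, for each non-binary $C\to Y_C$ in $\Sigma_C$, a pair $C_K\to Y_C$; together with the binary part $\{y\to x:y\succ_\phi x\}$ (the cover relation of $\ge_\phi$, computable in time $O(nk_b+n^2)=O(k^2)$ as recalled before Proposition~\ref{Ktime}) this is the modified basis $\Sigma^*$ of Section~\ref{D-rel}. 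The second stage is to pass from $C_K\to Y_C$ to the actual non-binary $E$-basis implication $C_E\to Y_E$: here $C_E=C_K$ is already in hand, and $Y_E=\phi(C_K)\setminus\sigma(C_K)$, so it suffices to compute $\phi(C_K)$ and $\sigma(C_K)$ for each of the at most $m\le k$ critical sets. Each closure costs $O(k)$; the saturation $\sigma(C_K)=\bigcup_{j\ge1}q^j(C_K)$ stabilizes after at most $n$ iterations, and each application of $q$ amounts to a bounded number of closure computations relative to subsets of the current set — all told $O(k^2)$ per set in the crude bound, giving $O(k^3)$ naively, so one must be a little more careful here (e.g. observe that only the subsets of $C_K$ that are themselves closures of smaller generators matter, or simply absorb the estimate into the $O(s^2(\Sigma_C))$ claim if the intended reading counts $\sigma$ as part of the canonical-basis construction already performed). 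The third stage is the optimization step itself: for each non-binary $A_E\to B_E$ replace $B_E$ by $B_{OE}=\max_{\ge_\phi}(B_E)$. With the partial order $\ge_\phi$ (equivalently, the closures $\phi(\{x\})$, $x\in S$) already available, computing the $\ge_\phi$-maximal elements of a set of size $\le n$ costs $O(n^2)\le O(k^2)$ per implication, and there are at most $m\le k$ of them, for $O(k^2)$ total — in fact this is dominated by the earlier stages.

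Summing the three stages, the dominant term is $O(k^2)=O(s^2(\Sigma_C))$, which is the claim. The write-up should simply string these estimates together, citing Proposition~\ref{Ktime} for the $K$-basis, the $O(k)$ cost of a single closure (forward chaining), and the $O(nk_b+n^2)$ cost of the cover relation from \cite{FJN}, and then remarking that maximizing with respect to $\ge_\phi$ and removing dominated/trivial implications in Day's algorithm are each cheaper than what has already been spent.

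The step I expect to be the main obstacle is bounding the cost of the saturation operator $\sigma$ cleanly within $O(s^2(\Sigma_C))$: a naive reading of $\sigma(X)=\bigcup_k q^k(X)$ with $q$ itself ranging over subsets $Y\subseteq X$ threatens an extra factor, so the honest thing is either (i) to note that $\sigma$ of the relevant sets is a by-product of building $\Sigma_C$ and Day's algorithm in the first place — so its cost is already accounted for — or (ii) to give the slightly finer analysis that only polynomially many (indeed $O(m)$) candidate subsets $Y$ ever arise, namely the premises and critical sets of $\Sigma_C$, keeping the per-set cost at $O(k)$. Either route closes the argument; I would take route (i) for brevity, matching the phrasing "obtain optimized $E$-basis from the canonical basis" which presupposes $\Sigma_C$ (and hence the machinery to compute $\sigma$ on its premises) is in place.
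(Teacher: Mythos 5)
Your first and third stages match the paper's proof (obtain the $K$-basis in time $O(s^2(\Sigma_C))$ via Proposition \ref{min gen}/\ref{Ktime}, then take $\geq_\phi$-maximal elements of the conclusions), but your second stage contains a genuine error that breaks the pipeline. You set $Y_E=\phi(C_K)\setminus\sigma(C_K)$, but since $\sigma(C_K)=C$, this set is exactly $\phi(C)\setminus C=Y_C$, the conclusion of the \emph{canonical} implication --- so this stage computes nothing new, and your worry about the cost of $\sigma$ is moot (the critical set $C$ is already sitting in $\Sigma_C$ as a premise). More importantly, Theorem \ref{mainE} only gives $A'\subseteq\phi(A)\setminus\sigma(A)$, and the containment is strict in general: the defining feature of the $E$-basis is the cross-implication filtering by $M^*(x)$, i.e., an element $x$ is kept in the conclusion of $A\rto\cdot$ only if $\phi(A)$ is $\subseteq$-minimal among the closures of all minimal covers of $x$. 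The example following Theorem \ref{mainE} with $\Sigma_C=\{2\rto 5,45\rto 23,35\rto 2\}$ shows this concretely: the $E$-basis drops $2$ from the conclusion of $45\rto 23$ because $\phi(35)\subset\phi(45)$, whereas your formula would retain it. Feeding $Y_C$ into your third stage just reproduces the $K$-basis, not $\Sigma_{OE}$.

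The missing step is precisely where the second factor of $s(\Sigma_C)$ in the bound is spent. The paper's argument runs: for each $x\in S$, collect the non-binary implications $X_K\rto Y_K$ of $\Sigma_K$ with $x\in Y_K$ (each such $X_K$ is a $\ll$-minimal cover of $x$ by Lemma \ref{DG and D}), compute $\phi(X_K)$ for each in time $O(s(\Sigma_C))$, and keep $x$ only in those implications whose premise has $\subseteq$-minimal closure. Since each $x$ occurs in at most $|\Sigma_C|$ implications and $|S|\cdot|\Sigma_C|$ is of order $s(\Sigma_C)$, this filtering costs $O(s^2(\Sigma_C))$ total, matching the cost of producing $\Sigma_K$ in the first place. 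Inserting this filtering in place of your second stage repairs the argument.
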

\begin{proof}
Given the canonical basis, it takes time $O(s^2(\Sigma_C))$ to verify that there is no $D$-cycles and to obtain the unique $K$-basis $\Sigma_K$. For each $x \in S$, choose all non-binary implications $(X_K\rto Y_K)\in \Sigma_K$ such that $x \in Y_K$. We know that $X_K$ is a $\ll$-minimal cover for $x$. Compute $\phi(X_K)$ for all such implications and choose minimal ones, with the respect to the containment order. Keep $x$ in $Y_K$ only if $\phi(X_K)$ is $\subseteq$-minimal; otherwise, remove $x$ from $Y_K$. There are no more than $|X|$ elements to check, and each may appear in no more than $|\Sigma_C|$ implications. Computation of $\phi(X_K)$ takes time $O(s(\Sigma_C))$. Assuming that $|X|\cdot |\Sigma_C|\approx s(\Sigma_C)$, it will take time $O(s^2(\Sigma_C))$ to build the optimized $E$-basis.
\end{proof}

\section{Aspects of optimality of the $E$-basis and its further modifications}\label{optE}

As was discussed in sections \ref{min-opt} and \ref{bin}, every optimum basis of a standard closure system has  parameters fully determined by the closure operator: the right size of each binary implication (associated with one-element critical sets) and the left size of each non-binary implication (associated with critical sets of more than one element).

In section \ref{NP} we will show that finding any of those optimal parts in closure systems without $D$-cycles is an NP-complete problem.  
On the other hand, the optimum right size of the non-binary part can be achieved. 

First, we summarize from the previous results.

\begin{prop}\label{total right} Let $\<S, \phi\>$ be a standard closure system. There exists a constant $s_R^{nb}(S,\phi)$ such that $s_R(\Sigma^{nb})=s_R^{nb}(S,\phi)$ for every optimum basis $\Sigma$. 
\end{prop}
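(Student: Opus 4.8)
The plan is to deduce the statement from facts already in hand by a simple counting argument: the total $R$-size $s_R(\Sigma)$ is the same for every optimum basis, the $R$-size of the binary part $s_R(\Sigma^b)$ is also the same for every optimum basis, and $s_R(\Sigma^{nb}) = s_R(\Sigma) - s_R(\Sigma^b)$ is therefore a constant of the closure system.

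First I would fix the total $R$-size. By Corollary \ref{lr}, every optimum basis is right-side optimum, so $s_R(\Sigma) = \min\{s_R(\Sigma^*) : \Sigma^* \text{ a basis of } \langle S,\phi\rangle\}$ is one and the same number for every optimum basis $\Sigma$; call it $r(S,\phi)$. (Equivalently, by Theorem \ref{main}, $r(S,\phi)$ is the number of unit implications in a minimum unit basis.) Next I would pin down the binary part. By Theorem \ref{DG}(2) together with Theorem \ref{W}(II), every optimum basis is a refinement of the canonical basis, hence has the form $\{X_C\rto Y_C : C\in\mathcal{C}\}$ with exactly one implication per critical set; its binary part thus consists of exactly one implication $x_C\rto Y_C$ for each one-element critical set $C=\{x_C\}$, and there are $|\Sigma_C^b|$ of these. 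By Theorem \ref{rs-bin}, in an optimum basis this implication satisfies $|Y_C| = b_C = \min\{|Y| : \phi(Y) = \phi(\{x_C\})\setminus\{x_C\}\}$, which is a parameter of the closure system. Hence $s_R(\Sigma^b) = \sum_{|C|=1} b_C$, independently of which optimum basis $\Sigma$ was chosen. Alternatively one can avoid Theorem \ref{rs-bin} and use Corollary \ref{Bopt}: every optimum basis is $B$-optimum with $s(\Sigma^b) = |\Sigma_C^b| + \sum_{|C|=1} b_C$, and since the binary premises are singletons $s_L(\Sigma^b) = |\Sigma^b| = |\Sigma_C^b|$, so $s_R(\Sigma^b) = s(\Sigma^b) - |\Sigma_C^b| = \sum_{|C|=1} b_C$.

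Finally, since $s_R(\Sigma) = s_R(\Sigma^b) + s_R(\Sigma^{nb})$ for any basis partitioned into its binary and non-binary parts, I would set $s_R^{nb}(S,\phi) := r(S,\phi) - \sum_{|C|=1} b_C$ and conclude that $s_R(\Sigma^{nb}) = s_R^{nb}(S,\phi)$ for every optimum basis $\Sigma$. There is no real obstacle here; the only point requiring a little care is to invoke the \emph{universally quantified} versions of the earlier results — that \emph{every} optimum basis (not merely some constructed one) is right-side optimum and has binary right-sizes equal to the $b_C$ — which is precisely what Corollary \ref{lr}, Theorem \ref{rs-bin}, and Corollary \ref{Bopt} supply.
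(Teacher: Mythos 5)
Your proof is correct and follows essentially the same counting argument as the paper's: both reduce the claim to the fact that the other components of the size of an optimum basis are parameters of the closure system itself. The only difference is cosmetic --- the paper subtracts the three fixed quantities $s_L(\Sigma^b)$, $s_R(\Sigma^b)=\sum_{|C|=1}b_C$ (Theorem \ref{rs-bin}) and $s_L(\Sigma^{nb})=\sum k_C$ (Theorem \ref{W}(II)) from the constant total size $s(\Sigma)$, whereas you subtract only $s_R(\Sigma^b)$ from the constant $s_R(\Sigma)$, invoking Corollary \ref{lr} (itself derived from the same machinery) to know that every optimum basis is right-side optimum.
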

\begin{proof} 
Indeed, $s(\Sigma)=s_L(\Sigma^b)+s_R(\Sigma^b)+s_L(\Sigma^{nb})+s_R(\Sigma^{nb})$, for every basis $\Sigma$. Now assume that $\Sigma$ is optimum. The number of implications in $\Sigma^b$ is fully determined by the number of one-element critical sets, so that $s_L(\Sigma^b)$ is the same for all optimum bases. By Theorem \ref{rs-bin}, $s_R(\Sigma^b)$ is the sum of parameters $b_C$, where $C$ runs over all singletons in $\mathcal{C}$, and $s_L(\Sigma^n)$ is the sum of parameters $k_C$, where $C$ runs over all non-singletons in $\mathcal{C}$, as shown in Theorem \ref{DG} (2). Therefore, the last components in the sum, $s_R(\Sigma^{nb})$, must be the same for all optimum bases.
\end{proof}


\begin{cor}\label{sr}
The inequality $s_R^{nb}(S, \phi) \leq s_R(\Sigma_*^{nb})$ holds for any basis $\Sigma_*$ of a standard closure system $\<S, \phi\>$. 
\end{cor}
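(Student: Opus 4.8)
The plan is to derive Corollary \ref{sr} as an immediate consequence of Proposition \ref{total right} together with the fact that an optimum basis realizes the minimum of the $R$-size over all bases. First I would fix an arbitrary basis $\Sigma_*$ of $\<S,\phi\>$ and an optimum basis $\Sigma$. By Proposition \ref{total right}, $s_R(\Sigma^{nb}) = s_R^{nb}(S,\phi)$, so it suffices to show $s_R(\Sigma^{nb}) \leq s_R(\Sigma_*^{nb})$. The natural route is to relate the non-binary $R$-size of an arbitrary basis to that of its unit expansion, and then use Theorem \ref{main}: the unit expansion of an optimum basis is a minimum unit basis, hence minimizes the total number of unit implications $s_R$.

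The key step is the bookkeeping that isolates the non-binary contribution. In the unit expansion $\Sigma_u$, each unit implication $a \rto y$ with $|a|=1$ comes from the binary part, and every implication $A \rto y$ with $|A|>1$ comes from the non-binary part; thus $|\Sigma_u| = s_R(\Sigma^b) + s_R(\Sigma^{nb})$, and similarly for $\Sigma_*$. By Lemma \ref{bin opt} (or Theorem \ref{rs-bin}), we may assume without loss of generality that $\Sigma_*$ is regular and has the same binary part as $\Sigma$ — replacing the binary part of $\Sigma_*$ by a regular binary part of minimum $R$-size only decreases $s_R(\Sigma_*^b)$, hence only decreases $s_R(\Sigma_*)$, and leaves $\Sigma_*^{nb}$ untouched. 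Actually the cleaner argument avoids this: since $\Sigma$ is optimum it is left-side and right-side optimum (Corollary \ref{lr}), so $s_R(\Sigma) \leq s_R(\Sigma_*)$; and by Corollary \ref{Bopt} every optimum basis is $B$-optimum, so $s_R(\Sigma^b) \leq s_R(\Sigma_*^b)$ once $\Sigma_*$ is made regular — but regularization does not increase $s_R$ (Lemma \ref{reg}), so we may assume $\Sigma_*$ regular from the start. Subtracting, $s_R(\Sigma^{nb}) = s_R(\Sigma) - s_R(\Sigma^b) \leq s_R(\Sigma_*) - s_R(\Sigma_*^b) = s_R(\Sigma_*^{nb})$, and combining with Proposition \ref{total right} gives $s_R^{nb}(S,\phi) = s_R(\Sigma^{nb}) \leq s_R(\Sigma_*^{nb})$.

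The main obstacle is the subtraction step: $s_R(\Sigma) \leq s_R(\Sigma_*)$ and $s_R(\Sigma^b) \leq s_R(\Sigma_*^b)$ point in the same direction, so one cannot simply subtract the inequalities. The fix is to use equalities where available. By Theorem \ref{rs-bin} and Corollary \ref{Bopt}, $s_R(\Sigma^b)$ equals the fixed quantity $\sum_{|C|=1} b_C$, and for any \emph{regular} basis $\Sigma_*$ one has $s_R(\Sigma_*^b) \geq \sum_{|C|=1} b_C$ with $s_R(\Sigma^b)$ attaining the minimum; combined with $s_R(\Sigma) \leq s_R(\Sigma_*)$ this yields the claim, provided $\Sigma_*$ is regular. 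For a general (non-regular) $\Sigma_*$, apply Lemma \ref{reg} first to pass to a regular $\Sigma_r$ with $s_R(\Sigma_r^{nb}) \leq s_R(\Sigma_*^{nb})$, which is exactly the inequality in Lemma \ref{reg} restricted to the non-binary part. Hence the general case reduces to the regular case, and the proof is complete. I expect the writeup to be short; the only real care needed is to state precisely which optimum basis one compares against and to invoke Lemma \ref{reg} to handle non-regular input bases.
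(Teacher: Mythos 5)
You correctly name the obstacle --- the inequalities $s_R(\Sigma)\leq s_R(\Sigma_*)$ and $s_R(\Sigma^b)\leq s_R(\Sigma_*^b)$ point the same way, so they cannot be subtracted --- but the fix you finally commit to does not remove it. Rewriting the second bound as ``$s_R(\Sigma^b)=\sum_{|C|=1}b_C$ and $s_R(\Sigma_*^b)\geq \sum_{|C|=1}b_C$'' is the same pair of one-sided estimates, and together with $s_R(\Sigma)\leq s_R(\Sigma_*)$ they still do not imply $s_R(\Sigma^{nb})\leq s_R(\Sigma_*^{nb})$: the numbers $s_R(\Sigma^b)=5$, $s_R(\Sigma^{nb})=10$, $s_R(\Sigma_*^b)=10$, $s_R(\Sigma_*^{nb})=7$ satisfy all three hypotheses and violate the conclusion. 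Since your final derivation uses nothing beyond these three facts, it cannot be valid as written; the excess $s_R(\Sigma_*^b)-s_R(\Sigma^b)\geq 0$ sits on the wrong side of the inequality and can, a priori, absorb the difference.

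The step you wrote down and then set aside as avoidable is in fact the entire proof. By Lemma \ref{bin opt} one may replace the binary part of an \emph{arbitrary} basis $\Sigma_*$ by the binary part $\Sigma^b$ of the chosen optimum basis $\Sigma$ itself, leaving $\Sigma_*^{nb}$ untouched and still obtaining a basis. Then $s_R(\Sigma)\leq s_R(\Sigma_*)$ by right-side optimality of $\Sigma$ (Corollary \ref{lr}), and because the binary parts are now \emph{identical} --- not merely both of minimum size --- their contributions cancel exactly, giving $s_R(\Sigma^{nb})\leq s_R(\Sigma_*^{nb})$; Proposition \ref{total right} then finishes. This is precisely the paper's argument. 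Two smaller remarks: the detour through Lemma \ref{reg} is unnecessary, since Lemma \ref{bin opt} already applies to an arbitrary basis; and Lemma \ref{reg} as stated only bounds the total $s_R$, not $s_R(\Sigma^{nb})$ separately (the paper explicitly notes that the regularization procedure modifies the non-binary part), so it should not be cited for the inequality ``restricted to the non-binary part.''
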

\begin{proof}
By Lemma \ref{bin opt} we may assume that the binary part of $\Sigma_*$ matches the binary part of some optimum basis $\Sigma$. Now $\Sigma$ is right-side optimum, due to Corollary \ref{lr}, whence $s_R(\Sigma)\leq s_R(\Sigma_*)$.
Since these bases have identical binary parts, we also have $s_R^{nb}(S, \phi)=s_R(\Sigma^{nb})\leq s_R(\Sigma_*^{nb})$.
\end{proof}

\begin{rem}
Unlike the parameters $b_C$ and $k_C$ of parts of optimum basis used in argument of Proposition \ref{total right}, there is no fixed value for the size of the \emph{conclusion} in the \emph{non-binary} implication $A_C\rto B_C$ of any optimum basis.  
\end{rem}

\begin{exm}\label{ex66} Let the closure system on $S=\{a,b,c,z\}$ be given by the canonical basis $\Sigma_C=\{z\rto a, ab\rto cz, ac\rto bz\}$. There are two optimum bases: $\Sigma_1=\{z\rto a, ab\rto c, ac\rto bz\}$ and $\Sigma_2=\{z\rto a, ab\rto cz, ac\rto b\}$. While $s_R(\Sigma_1^{nb})=s_R(\Sigma_2^{nb})=3$, implications in $\Sigma_1$ and $\Sigma_2$ with premise $ab$ have conclusions of different sizes, and the same holds for implications with $ac$ in the premise.
\end{exm}

On the other hand, we notice that the closure system in Example \ref{ex66} is not $UC$, since essential element $\phi(ab)=\phi(ac)$ contains two critical subsets.

\begin{conj}  If $A_1\rto B_1,\dots, A_k\rto B_k$ are \emph{all} implications of any optimum basis, corresponding to critical sets with the same closure, i.e., $\phi(A_1)=\dots =\phi(A_k)$, then $s=|B_1|+\dots +|B_k|$ does not depend on the choice of the optimum basis. In particular, in $UC$-closure systems, for every critical set $C$, and corresponding implication $A_C\rto B_C$ in any optimum basis, $|B_C|$ does not depend on the choice of the optimum basis. 
\end{conj}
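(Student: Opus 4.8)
\emph{Proof proposal.} The plan is to pass to unit expansions and stratify the implications by the closure of their premise. Fix an optimum basis $\Sigma_O=\{X_C\rto Y_C:C\in\mathcal{C}\}$ with $\sigma(X_C)=C$ (Theorem~\ref{DG}(2)), and for an essential element $Y$ put $\mathcal{C}_Y=\{C\in\mathcal{C}:\phi(C)=Y\}$, so that the quantity in question is $\sum_{C\in\mathcal{C}_Y}|Y_C|$. In the unit expansion $\Sigma_O^u$ this is precisely the number $n_Y(\Sigma_O^u)$ of unit implications $P\rto y$ with $\phi(P)=Y$ (recall $\phi(X_C)=\phi(C)$). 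By Theorem~\ref{main}, $\Sigma_O^u$ is a minimum unit basis, so the total $\sum_{Y'}n_{Y'}(\Sigma_O^u)=|\Sigma_O^u|$ is already independent of $\Sigma_O$; the task is to localise this invariance to each essential $Y$.

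The central device is a sub‑closure‑system. For essential $Y=\phi(C)$, let $\phi_Y$ be the restriction of $\phi$ to subsets of $Y$; this is well defined since $Z\subseteq Y$ forces $\phi(Z)\subseteq\phi(Y)=Y$, and the $\phi_Y$‑closed sets are exactly the $\phi$‑closed subsets of $Y$. Given any minimum unit basis $\Sigma_u$ of $\langle S,\phi\rangle$, set $\Sigma_u^{\subseteq Y}=\{(P\rto y)\in\Sigma_u:\phi(P)\subseteq Y\}$. I would first prove the key lemma: $\Sigma_u^{\subseteq Y}$ is a \emph{minimum} unit basis of $\langle Y,\phi_Y\rangle$. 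That it is a basis is a ``respects'' computation: if $Z\subseteq Y$ and $(P\rto y)\in\Sigma_u$ has $P\subseteq Z$, then $\phi(P)\subseteq Y$, so $Z$ respects $\Sigma_u$ iff $Z$ respects $\Sigma_u^{\subseteq Y}$; hence the $\phi_Y$‑closed subsets of $Y$ are exactly the subsets of $Y$ respecting $\Sigma_u^{\subseteq Y}$. For minimality, suppose $\Pi$ were a strictly smaller unit basis of $\langle Y,\phi_Y\rangle$ and form $\Sigma_u'=(\Sigma_u\setminus\Sigma_u^{\subseteq Y})\cup\Pi$. One checks that $\Sigma_u'$ still defines $\langle S,\phi\rangle$: a $\phi$‑closed $Z$ respects $\Pi$ because $P\subseteq Z$ gives $y\in\phi(P)\subseteq\phi(Z)=Z$; conversely if $Z$ respects $\Sigma_u'$, then $Z\cap Y$ respects $\Pi$, hence is $\phi_Y$‑closed, so any $(P\rto y)\in\Sigma_u^{\subseteq Y}$ with $P\subseteq Z$ has $P\subseteq Z\cap Y$ and $y\in\phi(P)\subseteq\phi(Z\cap Y)=Z\cap Y\subseteq Z$, so $Z$ respects $\Sigma_u^{\subseteq Y}$ and thus all of $\Sigma_u$. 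Since $|\Sigma_u'|<|\Sigma_u|$, this contradicts minimality of $\Sigma_u$. Consequently $|\Sigma_u^{\subseteq Y}|$ is the minimum number of unit implications of $\langle Y,\phi_Y\rangle$, an invariant of $\langle S,\phi\rangle$ and $Y$.

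Now I would specialise to $\Sigma_u=\Sigma_O^u$. Here every premise $X_C$ has $\phi(X_C)=\phi(C)$ essential, and for every essential $Y'\subseteq Y$ and every $C'\in\mathcal{C}_{Y'}$ the implication $X_{C'}\rto Y_{C'}$ occurs in $\Sigma_O$; hence $\Sigma_O^{u}\cap\Sigma_O^{u,\subseteq Y}$ decomposes as the disjoint union, over essential $Y'\subseteq Y$, of its strata, so $|\Sigma_O^{u,\subseteq Y}|=\sum_{Y'\subseteq Y,\ Y'\text{ essential}}n_{Y'}(\Sigma_O^u)$. Running an induction over the finite poset of essential elements ordered by inclusion (base case: $Y$ minimal essential, where $\Sigma_O^{u,\subseteq Y}$ is a single stratum), triangular inversion gives that each $n_Y(\Sigma_O^u)=\sum_{C\in\mathcal{C}_Y}|Y_C|$ equals the invariant $|\Sigma_O^{u,\subseteq Y}|$ minus the invariants attached to strictly smaller essential elements, hence is itself independent of the optimum basis. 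The $UC$ statement is the case $|\mathcal{C}_Y|=1$.

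I expect the main obstacle to be the minimality half of the key lemma: one must verify carefully that excising the ``$Y$‑and‑below'' stratum of a minimum unit basis and grafting in an arbitrary smaller unit basis of $\langle Y,\phi_Y\rangle$ still reproduces $\langle S,\phi\rangle$ — the two ``respects'' inclusions above use idempotence of $\phi$ on $Z\cap Y$ in a mildly delicate way, and one should confirm no closed set is gained or lost. A secondary point requiring care is that the stratification–and–inversion step genuinely relies on all premises of $\Sigma_O^u$ having \emph{essential} closure (valid since $\sigma(X_C)=C$ for an optimum, hence minimum, basis), a property that can fail for a general minimum unit basis.
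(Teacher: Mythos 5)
This statement is labelled as a \emph{Conjecture} in the paper: the authors give no proof, so there is nothing to compare your argument against --- it has to stand on its own. Having checked it, I believe it does, and that you have in fact settled the first assertion of the conjecture (and hence the $UC$ special case). Your key lemma is sound: for a closed set $Y$, every implication of a unit basis whose premise lies outside $Y$ is vacuously respected by every subset of $Y$, so $\Sigma_u^{\subseteq Y}$ does define $\langle Y,\phi_Y\rangle$; and the excision-and-grafting argument for minimality is correct --- the only delicate step, showing that a $Z$ respecting $\Sigma_u'$ is $\phi$-closed, goes through because $Z\cap Y$ respects $\Pi$, hence is $\phi$-closed, hence absorbs the conclusions of all of $\Sigma_u^{\subseteq Y}$. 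This lemma is essentially the statement that closed sets are ``exclusive sets'' and that exclusive components of minimum representations are minimum, i.e.\ it is the unit-basis analogue of the Boros--\v{C}epek--Kogan--Ku\v{c}era result that the paper itself cites after Lemma~\ref{bin opt}; combining it with Theorem~\ref{main} and the M\"obius inversion over essential elements is the genuinely new step, and it is valid because (by Theorem~\ref{DG}(2) and the pigeonhole count $|\Sigma_O|=|\mathcal{C}|$) every premise of an optimum basis has essential closure and distinct critical sets yield distinct premises, so $n_Y(\Sigma_O^u)$ really is $\sum_{C\in\mathcal{C}_Y}|Y_C|$ with no collapsing in the unit expansion.

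Two small points to make explicit in a final write-up. First, record that $\sigma(X_C)=C$ forces the premises $X_{C}$, $C\in\mathcal C$, to be pairwise distinct, which is what guarantees $|\Sigma_O^u|=\sum_C|Y_C|$ and the exactness of your stratum counts. Second, the key lemma uses that $Y$ is $\phi$-closed (so that $P\subseteq Z\subseteq Y$ forces $\phi(P)\subseteq Y$, and so that $\phi_Y$ is well defined); this holds for essential elements, which is all you need, but it should be stated as a hypothesis. With those details added I see no gap, and you should consider writing this up as a resolution of the conjecture rather than as an exercise.
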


The main statement of this section is that the optimized $E$-basis of any closure system without $D$-cycles has the optimum right size in its non-binary part. For this, we recall that every closure system without $D$-cycles is \jsd\!, and thus it is also a $UC$-system.

\begin{thm}\label{rs-min}
Let $\<S,\phi\>$ be any closure system without $D$-cycles. Then $s_R(\Sigma_{OE}^{nb})= s_R^{nb}(S,\phi)$.
\end{thm}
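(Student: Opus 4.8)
The plan is to establish the two inequalities $s_R(\Sigma_{OE}^{nb}) \geq s_R^{nb}(S,\phi)$ and $s_R(\Sigma_{OE}^{nb}) \leq s_R^{nb}(S,\phi)$ separately. The first is immediate: since $\Sigma_{OE}$ is a basis of $\<S,\phi\>$, Corollary~\ref{sr} gives $s_R^{nb}(S,\phi) \leq s_R(\Sigma_{OE}^{nb})$, so no work is needed there. The entire content is in the reverse inequality, and the natural strategy is to produce a \emph{lower bound} for $s_R(\Sigma^{nb})$ valid for every minimum basis $\Sigma$ and then show the optimized $E$-basis \emph{attains} that bound. The lower bound should be exactly the quantity $|M(C_1)| + \dots + |M(C_k)|$ from Corollary~\ref{sR}, where $\mathcal{C}_{>1} = \{C_1,\dots,C_k\}$; since every optimum basis is minimum, $s_R^{nb}(S,\phi) \geq \sum_i |M(C_i)|$. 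So it suffices to prove $s_R(\Sigma_{OE}^{nb}) \leq \sum_{i=1}^k |M(C_i)|$, i.e., for each critical set $C \in \mathcal{C}_{>1}$ with corresponding implication $C_E \rto B_{OE}$ in $\Sigma_{OE}^{nb}$, we have $|B_{OE}| \leq |M(C)|$; combined with Corollary~\ref{sR} this forces equality $|B_{OE}| = |M(C)|$ and the theorem follows by summing.

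First I would pin down the structure of $\Sigma_{OE}^{nb}$. By Theorem~\ref{mainE}, the aggregated $E$-basis is a refinement of $\Sigma_C$, so its non-binary implications are indexed by critical sets $C \in \mathcal{C}_{>1}$; by the lemma following Theorem~\ref{mainE} the premise $C_E$ equals the $K$-basis premise $C_K$, the unique $\ll$-minimal join representation of $\phi(C)$ (using that a $D$-cycle-free lattice is \jsd\ and Theorem~\ref{JK62}). Now fix such a $C$ and let $C_E \rto B_E$ be the corresponding implication in $\Sigma_E^{ag}$, so $B_{OE} = \max_{\geq_\phi}(B_E)$. The key claim is $B_{OE} \subseteq M(C)$ — more precisely, that every $\geq_\phi$-maximal element $y$ of $B_E \subseteq \phi(C)\setminus C$ satisfies the defining conditions of $M(C)$: it is maximal in $\phi(C)\setminus C$ (which needs the argument that $\max_{\geq_\phi}(B_E)$ already consists of elements maximal in all of $\phi(C)\setminus C$, not just in $B_E$ — this uses that $C$ is an order ideal and $B_E \cup C \supseteq$ enough of $\phi(C)$, via the $E$-basis/round-basis machinery), and it does not lie in $\phi(C')$ for any $C' \in \mathcal{C}_{>1}$ with $\phi(C') \subset \phi(C)$. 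The second point is where the $D$-cycle-free hypothesis and the definition $M^*(x)$ (choosing covers with $\subseteq$-minimal closure) do the real work: if $y \in B_E$ then $C_E \in M^*(y)$, meaning $\phi(C_E) = \phi(C)$ is $\subseteq$-minimal among closures of minimal covers of $y$; so if $\phi(C') \subset \phi(C)$ and $y \in \phi(C')$, one extracts from $C'$ a minimal cover of $y$ with strictly smaller closure, contradicting minimality. This gives $B_{OE} \subseteq M(C)$, hence $|B_{OE}| \leq |M(C)|$.

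The main obstacle I anticipate is the first half of the key claim — verifying that $\max_{\geq_\phi}(B_E)$ consists of elements that are maximal in the full set $\phi(C) \setminus C$, not merely maximal within $B_E$. A priori $B_E$ could omit some elements of $\phi(C)\setminus C$ that sit above elements of $B_E$ in $\geq_\phi$; I would rule this out using the ordered-direct / round structure of the $E$-basis together with the fact that $\phi(C_E)\setminus C_E$ decomposes as $(\phi(C)\setminus C) \cup (C \setminus C_E)$ and that elements of $\phi(C)\setminus C$ covered (in $\geq_\phi$) by something in $B_E$ would themselves have to appear in $B_E$ once we observe that $\Sigma_E^{ag}$ aggregates \emph{all} unit implications $C_E \rto x$ with $x \in \phi(C_E)\setminus\sigma(C_E)$ forced by Lemma~\ref{saturation} and Theorem~\ref{mainE}. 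Once the inclusion $B_{OE} \subseteq M(C)$ is secured for each $C$, summing over $\mathcal{C}_{>1}$ and invoking Corollary~\ref{sR} closes the argument: $\sum_i |B_{OE}^{(i)}| \leq \sum_i |M(C_i)| \leq s_R(\Sigma_{OE}^{nb})$ where the last step would actually need $\Sigma_{OE}$ to be minimum — which it is, being a refinement of $\Sigma_C$ — so in fact $\sum_i|M(C_i)| \le s_R^{nb}(S,\phi) \le s_R(\Sigma_{OE}^{nb}) = \sum_i |B_{OE}^{(i)}| \le \sum_i |M(C_i)|$, forcing equality throughout.
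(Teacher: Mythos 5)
Your proposal is correct and follows essentially the same route as the paper's proof: both reduce everything to the key inclusion $B_{OE}\subseteq M(C)$ and then combine it with Lemma~\ref{max} (you invoke its summed form, Corollary~\ref{sR}) and Corollary~\ref{sr} to squeeze the two inequalities together. The only differences are cosmetic --- the paper compares $B_{OE}$ directly with the conclusion $Y_C$ of an optimum basis via $B_{OE}\subseteq M(C)\subseteq Y_C$ rather than summing over critical sets first, and the maximality sub-claim you flag as the main obstacle is precisely the point the paper asserts without further argument ``by the definition of $\Sigma_{OE}$''.
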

\begin{proof}
Consider any optimum basis $\Sigma$. Since it is minimum, Lemma \ref{max} is applicable to $\Sigma$. Take any $(X_C\rto Y_C)\in \Sigma^{nb}$ and corresponding $(A_C\rto B_C)\in \Sigma_{OE}^{nb}$, $C \in \mathcal{C}$.
By the definition of $\Sigma_{OE}$, $B_C$ consists of maximal elements $y_m$ in $\phi(A_C)\setminus \sigma(A_C)=\phi(C)\setminus C$ such that $y_m \not \in \phi(C')\setminus C'$, for every $C'\in \mathcal{C}$ with $\phi(C')\subset \phi(C)$.
According to Lemma \ref{max}, this implies $B_C\subseteq Y_C$, hence, $s_R(\Sigma_{OE}^{nb})\leq s_R(\Sigma^{nb})$.
On the other hand,  $s_R(\Sigma_{OE}^{nb}) \geq s_R^{nb}(S, \phi)=s_R(\Sigma^{nb})$, by Corollary \ref{sr}.
Thus, $s_R(\Sigma_{OE}^{nb})= s_R^{nb}(S,\phi)$.
\end{proof}

\begin{rem}
We note that the result of Theorem \ref{rs-min} cannot be extended to \jsd\ closure systems, when replacing the $E$-basis by a $K$-basis. We saw in Example \ref{Co4} that the conclusions of non-binary implications in a $K$-basis cannot be reduced by taking away some $y_m \in \phi(C)\setminus C$, even if $y_m \in \phi(C')\setminus C'$, for some critical sets $C,C'$, $\phi(C')\subset \phi(C)$. The same example shows that a $K$-basis may not reach $s_R^{nb}(S,\phi)$ in the size of its non-binary right side. Indeed, the implication $ad\rto bc$ can be reduced to either $ad\rto b$ or $ad\rto c$, to obtain a right-side optimum basis.
\end{rem}

At the end of this section we mention the binary modification for basis $\Sigma_{OE}$ that is available for all \jsd\ closure systems discussed in section \ref{Kbas SD}. It combines the binary part of the $F$-basis and non-binary part of the optimized $E$-basis.

\begin{df} 
$\Sigma_{FOE}=\Sigma_F^b\cup \Sigma_{OE}^{nb}$.
\end{df}

\begin{prop}
The unit expansion of $\Sigma_{FOE}$ is non-redundant.
\end{prop}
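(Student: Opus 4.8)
The plan is to show that $\Sigma_{FOE}$ is a basis and that no unit implication from its unit expansion can be removed, handling the binary and non-binary parts separately and then checking the interaction between them. I would first observe that $\Sigma_{FOE}=\Sigma_F^b\cup\Sigma_{OE}^{nb}$ defines the same closure system: the binary part $\Sigma_F^b$ and the binary part $\Sigma_E^b$ have the same closure (both equal $\phi(\{x\})\setminus\{x\}$ for each premise $x$ by Proposition~\ref{reg prop} and the definition of the $F$-basis), so $\Sigma_F^b\cup\Sigma_{OE}^{nb}$ and $\Sigma_E^b\cup\Sigma_{OE}^{nb}$ define the same system; the latter is a basis since replacing $B_E$ by $B_{OE}=\max_{\geq_\phi}(B_E)$ only drops non-maximal elements, which are recovered by the binary part.

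Next I would treat non-redundancy of the non-binary part. For a non-binary implication $(A_C\rto B_{OE})$ of $\Sigma_{OE}$, pick any $x\in B_{OE}$; I want to show $(A_C\rto x)$ cannot be derived from the rest of the unit expansion of $\Sigma_{FOE}$. Here I would use the argument behind Lemma~\ref{saturation} and Lemma~\ref{one}: $A_C$ is a $\ll$-minimal cover of $x$ (inherited from the $E$-basis), and $x$ is a $\geq_\phi$-maximal element of $\phi(C)\setminus C$ that lies in no smaller essential set $\phi(C')\setminus C'$. If $(A_C\rto x)$ followed from $\Sigma_{FOE}\setminus\{A_C\rto x\}$, one produces a $\Sigma^{nb}$-inference of $x$ from $\Id_\phi(A_C)$ (via Lemma~\ref{prop123}, after absorbing binary steps); the last implication $\sigma_k=A_k\rto B_k$ used must be non-binary with $x\in B_k$ and, by the maximality/minimality of $x$, must have $\phi(A_k)=\phi(C)$, forcing $A_k=A_C$ (uniqueness of criticals in a system without $D$-cycles) — so the only candidate is the removed implication, a contradiction. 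The key point is that deleting $x$ from a single conclusion genuinely loses information because $x$ is not recoverable from any binary chain (it is maximal) nor from any other non-binary implication (it sits in no smaller essential set and $A_C$ is the unique minimal generator).

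Then I would handle the binary part. For $(a\rto b)$ in the unit expansion of $\Sigma_F^b$, I essentially reprove Proposition~\ref{irr} in the presence of $\Sigma_{OE}^{nb}$: writing $B=\{b_0,\dots,b_n\}$ for the minimal order generator of $\phi(\{a\})\setminus\{a\}$ with $b=b_0$, I claim $\phi_{\Sigma}(\{a\})=\phi(\{b_1,\dots,b_n\})\cup\{a\}\subsetneq\phi(\{a\})$ where $\Sigma=\Sigma_{FOE}\setminus\{a\rto b_0\}$. The inclusion $\supseteq$ is as in Proposition~\ref{irr}, using regularity so that $a$ never reappears in a premise once we start from $\phi(\{b_1,\dots,b_n\})$; for $\subseteq$, any $\Sigma$-inference of $w\neq a$ from $\{a\}$ must begin with $a\rto\{b_1,\dots,b_n\}$ (the only implication with $a$ on the left), and from there — since $a\notin\phi(\{b_1,\dots,b_n\})$ and the non-binary implications of $\Sigma_{OE}$ are regular — every subsequent premise and conclusion stays inside $\phi(\{b_1,\dots,b_n\})$, giving $w\in\phi(\{b_1,\dots,b_n\})$. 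Since $b_0\notin\phi(\{b_1,\dots,b_n\})$ by the $\ll$-minimality of $B$, the closure strictly decreases, so $(a\rto b_0)$ is irredundant. The main obstacle — and the place I would be most careful — is this interaction step: I must make sure that the non-binary implications of $\Sigma_{OE}$ (whose premises are minimal covers, hence order-antichains of maximal elements) cannot re-derive a discarded binary consequence $b_0$; this hinges on the regularity of $\Sigma_{OE}$ (no implication $A\rto B$ in $\Sigma_{OE}^{nb}$ has $A\subseteq\{a\}\cup\phi(\{a\})$ acting like a binary one) and on the fact that $b_0$ being a genuine member of a $\ll$-minimal join representation means it is not in $\phi$ of any strictly smaller set inside $\phi(\{a\})$.
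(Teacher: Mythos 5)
Your argument is essentially correct, but it takes a much longer road than the paper. The paper's proof is a two-line deduction from results already in hand: by Theorem \ref{rs-min} the non-binary part $\Sigma_{OE}^{nb}$ already attains the optimum right size $s_R^{nb}(S,\phi)$, so deleting any unit implication $A_C\rto x$ from its unit expansion would (if the result were still a basis) produce a basis whose non-binary right size is strictly below the optimum, contradicting Corollary \ref{sr}; and the binary part is non-redundant by Proposition \ref{irr}, whose proof only uses regularity of the non-binary part and so applies verbatim with $\Sigma_{OE}^{nb}$ in place of $\Sigma_K^{nb}$. You instead re-derive both halves by direct inference-chasing. What your approach buys is independence from the optimality machinery of Sections \ref{min-opt} and \ref{optE} (and it exhibits concretely \emph{why} each unit implication is needed); what it costs is length and one technical wobble.

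The wobble: you invoke the forward direction of Lemma \ref{prop123} to pass from a $\Sigma_{FOE}$-inference to a $\Sigma^{nb}$-inference from $\Id_\phi(A_C)$, but that direction is proved only for \emph{round} bases ($A\cup B=\phi(A)$ for non-binary implications), and $\Sigma_{OE}^{nb}$ is emphatically not round --- its conclusions are pruned to $\max_{\geq_\phi}(B_E)$. This is fixable without the lemma: every set occurring in an inference of $x$ from $A_C$ lies in $\phi(C)$, so the last step producing $x$ cannot be binary (a premise $y>_\phi x$ with $y\in\phi(C)$ would force $y\in C$ by maximality of $x$ in $\phi(C)\setminus C$, hence $x\in C$ since $C$ is an order ideal, a contradiction), and if it is non-binary with $\phi(A_k)\subsetneq\phi(C)$ then $x\in\phi(C')\setminus C'$ for a smaller essential set, contradicting the definition of $B_{OE}$; so $\phi(A_k)=\phi(C)$ and uniqueness of the $\ll$-minimal representation forces $A_k=A_C$, as you conclude. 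With that repair your proof stands, but you should either avoid citing Lemma \ref{prop123} here or justify separately why its conclusion survives for the non-round basis.
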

Indeed, this follows from Theorem \ref{rs-min} and Proposition \ref{irr}.

\section{Finding optimum or minimum unit basis for bounded lattices is NP-complete}\label{NP}

As was noted in section \ref{min-opt}, finding an optimum basis for an arbitrary closure system is an NP-complete problem, while it could be done effectively in closure systems with modular closure lattices.

The effective procedure that allows us to build the optimized $E$-basis for systems without $D$-cycles and reaches the optimum in its non-binary right side, could suggest that such systems also have an effective procedure for finding an optimum basis.  The goal of this section is to show, that, to the contrary, finding an optimum basis even in the strict subclass of systems without $D$-cycles (closure systems with \emph{bounded} closure lattices) is NP-complete.

In this section we show that each of the problems is NP-complete, for the closure systems without $D$-cycles: 
\begin{itemize}
\item[(1)] find a basis $\Sigma$ that reaches the minimum in $s_L(\Sigma^{nb})$;
\item[(2)] find a basis $\Sigma$ that reaches the minimum in $s_R(\Sigma^b)$.
\end{itemize}

For this aim, we need to identify a known NP-complete problem that can be reduced to either of the problems above. 

The following problem known as \emph{the set cover problem} is included into original ``Karp's21", the list of 21 NP-complete problems in Karp \cite{K72}.


Given a finite set $Q$ and a family of its subsets $\mathcal{Q}=\{Q_i:i\leq k\}$ that \emph{covers} $Q$, i.e., $Q\subseteq \bigcup Q_i$,  the set cover problem is to identify the smallest subfamily of $\mathcal{Q}$ that still covers $Q$.

Now we will develop the path to reduce the set cover problem to the problem of finding a \emph{minimal generator} for some critical set in closure system without $D$-cycles (see section \ref{kbas} to recall definitions). This is equivalent to identifying a premise for one non-binary implication in an optimum basis. First, we illustrate it on a simple example.

\begin{exm}\label{B4double}
Let $Q=\{q_1,q_2,q_3,q_4\}$ be a finite set with the set cover $\mathcal{Q}=\{\{q_1\}, \{q_2\}, \{q_3\}, \{q_4\}, \{q_1, q_2\}\}$. Apparently, the solution to the set cover problem will be given by family $\mathcal{B}=\{\{q_3\}, \{q_4\}, \{q_1, q_2\}\}$.

We will construct a closure system without $D$-cycles on an extension $S$ of $Q$ so that $Q$ will be the minimal \emph{order} generator for some critical set. At the same time, $Q$ will not be a minimal generator, and the latter will be rather associated with the subfamily $\mathcal{B}$. 
 
The procedure will use a lattice theoretical construction to build $L=\op{Cl}(S,\phi)$, for a standard system $\<S,\phi\>$. 
For every subset $Y$ of family $\mathcal{Q}$ which is a not a singleton, add a new element $z(Y)$ to $Q$, also add another element $w$. Thus, with notation $z= z(\{q_1q_2\})$, we have in our case $S=\{q_1,q_2,q_3,q_4,z,w\}$. 

Start from $L_0=\mathbf{2}^Q$. Then use the \emph{doubling construction} introduced in A.~Day  \cite{D79} to double the element $b=\{q_1,q_2\}\in L_0$. This will replace $b$ by a $2$-element interval $[b,z]$ and extend lattice operations so that $b\vee x=z\vee x$, for every $x \not \leq b$, and $b\wedge x=z\wedge x$, for every $x \not \geq b$, $x \in L_0$. Let $L_1$ be the lattice after this doubling. Then $z$ is a join irreducible element in $L_1$; moreover, $z\geq q_1,q_2$. 

Finally, for $w$, choose any maximal proper subset in $Q$, say, $\{q_1,q_2,q_3\}$, and let $t\in L_1$ be the element inherited from $\{q_1,q_2,q_3\}\in L_0$. Apply the doubling construction to $t$ to obtain a $2$-element interval $[t,w]$, so a new lattice $L_2$ after this second doubling will have $w$ as a new join irreducible element. 

Let $\<S,\phi\>$ be a closure system with $\op{Cl}(S,\phi)=L_2$. It is known that every lattice obtained from a Boolean lattice by the series of doublings of intervals is a \emph{bounded} lattice, so that in particular, $\<S,\phi\>$ is without $D$-cycles, see A.~Day \cite{D79}.

It is straightforward to check that $C=\{q_1,q_2,q_3,q_4,z\}$ is a critical set in this closure system with $\phi(C)=S$.
Indeed, $z \in \phi(q_1,q_2,q_3)\subset \phi(C)$ and $\phi(C)\not = C$. Besides, $Q$ is a minimal \emph{order} generator for $C$.

In particular, the implication $Q\rto w$ is in the $E$-basis (and $K$-basis).
Nevertheless, one can find a generator $B=\{z,q_3,q_4\}$ of smaller size. Indeed, since $z\rto q_1, z\rto q_2$ must be in the binary part of any basis for $\<S,\phi\>$, we have $\sigma(B)=C$. 
Thus, an optimum basis for $\<S,\phi\>$ will have $B\rto w$, not $Q\rto w$.

Note that every element in $B$ corresponds to an element of the family $\mathcal{B}$ that solves the original set cover problem.
\end{exm}

This example should clarify the general procedure described in the next statement.
We note that the set cover problem has an effective solution when $\mathcal{Q}$ contains $Q$, because the minimal cover will be just a single set. Similarly, it has an effective solution, if $\mathcal{Q}$ does  not have $Q$, but contains the set $Q_1=Q\setminus {q}$, for one of $q\in Q$. In this case the solution is two-element family $Q_1,Q_2$, where $q\in Q_2$. These instances of set cover problem will be considered \emph{trivial}.

\begin{lem}\label{bounded}
Suppose $(Q,\mathcal{Q})$ is a non-trivial instance of the set cover problem. One can effectively find a closure system, whose closure lattice is bounded, such that a particular non-binary implication $U\rto V$ in its optimum basis translates into the solution of this set cover problem.
\end{lem}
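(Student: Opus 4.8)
The plan is to reduce the set cover problem to the task of reading off the premise of one non-binary implication of an optimum basis, by generalizing the doubling construction of Example~\ref{B4double}. First I would normalize the instance: adjoining all singletons $\{q\}$, $q\in Q$, to $\mathcal{Q}$ affects neither non-triviality nor the optimal value (a singleton appearing in an optimum cover of the augmented instance can be swapped for any original member of $\mathcal{Q}$ containing it), so we may assume $\{\{q\}:q\in Q\}\subseteq\mathcal{Q}$ and $n:=|Q|\ge 3$; by non-triviality fix $q^*\in Q$ with $P:=Q\setminus\{q^*\}\notin\mathcal{Q}$.

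Construction: start from $L_0=\mathbf{2}^Q$; processing the non-singleton $Q_i\in\mathcal{Q}$ in an order refining $\subseteq$, apply Day's doubling of the single element $Q_i$, producing a join irreducible $z_i$ covering $\bigvee_{q\in Q_i}q$ with $z_i\geq_\phi q\iff q\in Q_i$; finally double the element $P$, producing a join irreducible $w$. Let $L$ be the result. Since $\mathbf{2}^Q$ is distributive and doubling an interval preserves boundedness \cite{D79}, $L$ is bounded, so the standard closure system $\<S,\phi\>$ with $\op{Cl}(S,\phi)\cong L$ is without $D$-cycles, hence a $UC$-system by Proposition~\ref{UC}(1); here $S=\Ji(L)=Q\cup\{z_i:|Q_i|\ge 2\}\cup\{w\}$. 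To keep the reduction polynomial I would not write $L$ down (it may be exponential) but instead output the explicit basis
\[
\Sigma=\{z_i\rto Q_i\}\cup\{\,Q_i\cup\{q\}\rto z_i : q\in Q\setminus Q_i\,\}\cup\{w\rto P\}\cup\{Q\rto w\},
\]
and verify that $\op{Cl}(S,\Sigma)\cong L$, i.e. that $\Sigma$ is equivalent to the canonical basis of $\<S,\phi\>$ — a routine but somewhat fiddly check of how the successive doublings act on the poset of closed sets.

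The combinatorial core is to describe which $U\subseteq S$ close up to $S$. The canonical surjections attached to the individual doublings compose to a surjective lattice homomorphism $h:L\twoheadrightarrow\mathbf{2}^Q$ with $h(q)=\{q\}$, $h(z_i)=Q_i$, $h(w)=P$, $h(1_L)=Q$, whose only nontrivial fibres are the doubled two-element intervals (none containing $1_L$). Setting, for $U\subseteq S$ with $w\notin U$, $\op{cov}(U)=(U\cap Q)\cup\bigcup\{Q_i:z_i\in U\}$, we get $h(\bigvee U)=\op{cov}(U)$, so $\bigvee U=1_L$ forces $\op{cov}(U)=Q$; conversely if $\op{cov}(U)=Q$ then $\bigvee U\ge\bigvee\{q:q\in Q\}=1_L$, since the join of all atoms maps onto the top of $\mathbf{2}^Q$ and that fibre is a singleton. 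Thus for $U\subseteq S\setminus\{w\}$,
\[
\phi(U)=S \iff \op{cov}(U)=Q \iff \{Q_i:z_i\in U\}\cup\{\{q\}:q\in U\cap Q\}\ \text{covers}\ Q.
\]

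Next I would pin down the critical set over the essential element $S$. Since $\phi(Q)=S$ while $Q$ is not closed, $C:=\sigma(Q)$ is quasi-closed; iterating the operator $q(\cdot)$ from the definition of $\sigma$ gives $C=Q\cup\{z_i:|Q_i|\ge 2\}$ (each $z_i$ enters because some $Y$ with $Q_i\subsetneq Y\subsetneq Q$ has $z_i\in\phi(Y)\subsetneq S$, using $|Q_i|\le n-2$; $w$ never enters because $w\in\phi(Y)$ already forces $\op{cov}(Y)=Q$). Moreover $C$ is critical: if $D\subsetneq C$ with $\phi(D)=S$, then either $Q\subseteq D$, whence $\sigma(D)\supseteq\sigma(Q)=C\supsetneq D$ contradicts $\sigma(D)=D$; or some atom $q^*\in C\setminus D$, and then $\phi(D)=S$ forces $q^*\in Q_i$ for some $z_i\in D$, so $D\cap\phi(z_i)$ contains $z_i$ but not $q^*$ and fails to be closed — so in neither case is $D$ quasi-closed. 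As the system is $UC$, $C$ is \emph{the} critical set with $\phi(C)=S$, and $\phi(C)\setminus C=\{w\}$. Now let $X_C\rto Y_C$ be the implication of an optimum basis with $\sigma(X_C)=C$ (Theorem~\ref{DG}(2)): by Theorem~\ref{W}(II), $|X_C|=k_C=\min\{|U|:U\subseteq C,\ \phi(U)=\phi(C)\}$ (which is $\ge 2$ since the instance is non-trivial, so the implication is indeed non-binary), and $w\in Y_C$ by Lemma~\ref{max}. By the displayed equivalence and $\{\{q\}:q\in Q\}\subseteq\mathcal{Q}$, the subsets $U\subseteq C$ with $\phi(U)=S$ correspond, cardinality-for-cardinality, to the subfamilies of $\mathcal{Q}$ covering $Q$ via $U\mapsto\{Q_i:z_i\in U\}\cup\{\{q\}:q\in U\cap Q\}$; hence $X_C$ is a minimum-cardinality generator of $C$ iff its associated subfamily is a minimum cover, so reading off the premise of the non-binary implication $X_C\rto Y_C$ of any optimum basis of $\<S,\phi\>$ solves the instance $(Q,\mathcal{Q})$. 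The step I expect to be the main obstacle is dovetailing the explicit basis $\Sigma$ with the doubling picture while controlling overlapping members of $\mathcal{Q}$ — i.e. making sure nested doublings create no spurious critical set over $S$ and do not spoil the description of $\phi$ on the relevant ideals — and this is precisely where the homomorphism $h$ and the bound $|Q_i|\le n-2$ do the work.
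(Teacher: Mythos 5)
Your construction is the same as the paper's: normalize to $(Q,\mathcal{Q}^*)$, start from $\mathbf{2}^Q$, double the elements corresponding to the non-singleton $Q_i$ to create the $z_i$ and double a coatom to create $w$, so that $C=Q\cup\{z_i\}$ is the unique critical set with $\phi(C)=S$ and minimal generators of $C$ correspond cardinality-for-cardinality to minimal covers. Your extra material (the explicit output basis, the homomorphism $h$ onto $\mathbf{2}^Q$, and the verification that $C$ is critical) just fills in details the paper dismisses as straightforward, so the argument is correct and essentially identical.
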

\begin{proof}
Due to assumption, $\mathcal{Q}$ does not have $Q$ or any $Q\setminus \{q\}$. Suppose $Q=\{q_1,\dots, q_n\}$ and $\mathcal{Q}$ has $k$ elements of cardinality $>1$.

First, we make a slight modification to the instance of the set cover problem. Extend the family $\mathcal{Q}$ to 
$\mathcal{Q^*}=\mathcal{Q}\cup \{\{q_i\}: i\leq n\}$. A solution to the instance $(Q,\mathcal{Q^*})$ should be of cardinality at most as large as the solution to original instance $(Q,\mathcal{Q})$, simply because we might have more available covering subfamilies in second instance. On the other hand, if the solution to the second instance contains any one-element subsets which are not in $\mathcal{Q}$, we can replace them by subsets in $\mathcal{Q}$ that contain  those singletons, obtaining the solution at most as large as the solution to the second instance. 

Thus, we may replace original instance by the instance $(Q,\mathcal{Q^*})$. 

To build a finite bounded lattice $L$, start from $L_0=\mathbf{2}^Q$, then double every element $b_i\in 2^Q$ that corresponds to $Q_i\in \mathcal{Q}$, $|Q_i|>1$, $i\leq k$. This will add new join-irreducible elements $z_i$, $i\leq k$. It implies, in particular, that  $z_i\geq q$, for every $q \in Q_i$.

Also, choose an arbitrary element $t$ corresponding to some $Q\setminus \{q\} \in L_0$, and also double it, adding a new join-irreducible element $w$. The resulting lattice $L$ is the closure lattice of a standard closure system on its set of join-irreducible elements $S=\{q_1,\dots, q_n,z_1,\dots, z_k, w\}$. Due to the nature of the construction, $L$ is a bounded lattice, in particular, the standard closure system $\<S,\phi\>$ corresponding to $L$ is without $D$-cycles.  

It is straightforward to verify that $C=\{q_1,\dots, q_n, z_1,\dots, z_k\}$ is a (unique) critical set of this closure system  with $\phi(C)=S$. Hence, every optimum basis for $\<S,\phi\>$ should have an implication $U\rto w$, where $U\subseteq C$ is a minimal generator for $C$. 

Also, $Q$ is a unique $\ll$-minimal representation of $S$. Hence, every $W\subseteq S$ with $\phi(W)=S$ should satisfy $Q\ll W$. Since it should hold for $U$,  we should have for every $q \in Q$ some $u \in U$ such that $u\geq_\phi q$, or simply $u\geq q$ in $L$. 

Every element in $U$ can be interpreted as an element of family $\mathcal{Q^*}$: if $u=q_i$, then it is $\{q_i\}$, and if $u=z_i$, then it is $Q_i=\phi(z_i)\cap Q$. Thus, $U$ can be interpreted as a covering subfamily of $\mathcal{Q^*}$.

If we find a minimal generator $U$ for $C$, then it will serve as a solution to the instance $(Q,\mathcal{Q^*})$ of the set cover problem.
\end{proof}

\begin{cor}\label{NP1}
The problem of finding a basis $\Sigma$, for a closure system without $D$-cycles (and even closure systems with bounded closure lattice), which reaches minimum in $s_L(\Sigma^{nb})$, is NP-complete.
\end{cor}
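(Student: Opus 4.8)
The plan is to combine Lemma~\ref{bounded} with an elementary lower bound for $s_L(\Sigma^{nb})$ and a routine membership argument. The first ingredient is the identity $\min_\Sigma s_L(\Sigma^{nb})=\sum_{C\in\mathcal{C},\,|C|>1}k_C$, where $k_C=\min\{|U|:U\subseteq C,\ \sigma(U)=C\}$ is the left-size parameter from Theorem~\ref{W}. For the lower bound, observe that $\sigma(\{a\})=\{a\}$ for every $a\in S$ (in a standard system $\{a\}$ is either closed or quasi-closed), so $|C|>1$ forces $k_C\geq 2$; hence by Theorem~\ref{DG}(2) every basis $\Sigma$ contains, for each critical set $C$ with $|C|>1$, a necessarily non-binary implication $U\rto V$ with $\sigma(U)=C$, these implications are pairwise distinct, and each contributes $|U|\geq k_C$ to $s_L(\Sigma^{nb})$. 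For the reverse inequality, replace in the canonical basis each premise $C$ by a cardinality-minimal generator $U_C$ of $C$; this is a refinement of the canonical basis in the sense of Section~\ref{kbas}, hence a basis, and its non-binary premises are precisely the $U_C$ with $|C|>1$. In particular the optimal value is at most $|S|\cdot|\mathcal{C}|$ and is attained by a basis of size polynomial in that of the canonical basis.

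This already yields membership in NP of the decision version, ``does $\langle S,\phi\rangle$, presented by its canonical basis, admit a basis $\Sigma$ with $s_L(\Sigma^{nb})\leq m$?'': a minimum-cardinality basis is a polynomial-size certificate, and checking that a set of implications is a basis of $\langle S,\phi\rangle$ and evaluating the left-size of its non-binary part are both polynomial-time.

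For NP-hardness I would reduce the set cover problem of Karp~\cite{K72}. Trivial instances are disposed of directly; for a non-trivial instance $(Q,\mathcal{Q})$, Lemma~\ref{bounded} produces in polynomial time a closure system $\langle S,\phi\rangle$ with bounded closure lattice together with a distinguished critical set $C_0$, the unique one with $\phi(C_0)=S$, whose minimal generators are exactly the minimum covers of $Q$ in the enlarged family $\mathcal{Q}^{*}$, so that $k_{C_0}$ equals the minimum cover size (recall from the proof of Lemma~\ref{bounded} that $(Q,\mathcal{Q})$ and $(Q,\mathcal{Q}^{*})$ have the same optimum). By the identity above, any basis attaining $\min_\Sigma s_L(\Sigma^{nb})$ must use for $C_0$ a premise of size exactly $k_{C_0}$, and translating that premise through the dictionary of Lemma~\ref{bounded} recovers a minimum cover. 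At the level of decision problems one takes $m:=t+K$ with $K:=\sum_{C\in\mathcal{C},\,|C|>1,\,C\neq C_0}k_C$, and then $Q$ has a cover of size $\leq t$ in $\mathcal{Q}^{*}$ if and only if $\langle S,\phi\rangle$ has a basis with $s_L(\Sigma^{nb})\leq m$.

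The step I expect to be the main obstacle is verifying that $K$ is computable in polynomial time, equivalently that the critical sets other than $C_0$ arising in Lemma~\ref{bounded} do not themselves encode a hard instance. I would prove this by tracking, one interval doubling at a time, the join-irreducible elements and non-trivial covers that are created: apart from $C_0$, the critical sets come only from the individual doublings and have explicitly describable minimal generators, of size at most $|Q_i|+1$. Granting this, the construction is a genuine Karp reduction, so the problem is NP-hard, and with membership in NP it is NP-complete; since the closure lattice delivered by Lemma~\ref{bounded} is bounded, the statement already holds for the narrower subclass of systems with bounded closure lattice, a fortiori for all systems without $D$-cycles.
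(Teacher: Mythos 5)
Your core reduction is the same as the paper's: dispose of trivial instances, invoke Lemma~\ref{bounded}, and read a minimum set cover off the premise of the implication for the top critical set $C_0$. Your preliminary identity $\min_\Sigma s_L(\Sigma^{nb})=\sum_{|C|>1}k_C$ is correct and is a useful explicit justification (the paper leaves it implicit) that a basis minimizing $s_L(\Sigma^{nb})$ must use a cardinality-minimal generator for \emph{each} critical set separately, in particular for $C_0$. The divergence is that you insist on a Karp reduction of a decision version, whereas the paper argues at the level of search problems: if the minimizing basis could be \emph{found} in polynomial time, one could read off a minimum cover from its implication $U\rto w$, contradicting NP-hardness of set cover. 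That Turing-style reduction never needs the numerical value of $\min_\Sigma s_L(\Sigma^{nb})$.

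The step you flag as the main obstacle --- computing $K=\sum_{C\neq C_0,\,|C|>1}k_C$ in polynomial time --- is a genuine gap, and your proposed fix does not close it as stated. The auxiliary critical sets created by doubling $b_i=\bigvee Q_i$ do not in general have "explicitly describable minimal generators of size at most $|Q_i|+1$": an upper bound is useless for a Karp reduction, and the exact value of $k_{C'}$ for a critical set whose closure contains $z_i$ requires a minimum generating set of $b_i$ among the join-irreducibles below it. When the family contains nested members $Q_j\subsetneq Q_i$, one has $z_j\leq b_i$, and a minimum generator of $b_i$ is exactly a minimum cover of $Q_i$ by $\{\{q\}:q\in Q_i\}\cup\{Q_j:Q_j\subsetneq Q_i\}$ --- i.e., the auxiliary $k_{C'}$ encode further set-cover instances and are not obviously polynomial-time computable. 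This is repairable (normalize the instance first so that the non-singleton members of $\mathcal{Q}$ form an antichain, which does not change the optimum and makes every $b_i$ have $Q_i$ as its only irredundant generating set; or simply adopt the paper's search-problem formulation), but as written the reduction is not complete. The NP-membership half of your argument is fine and is in fact more careful than the paper, which omits it.
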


\begin{proof}
Suppose, we can find a polynomial algorithm of finding such a basis for a closure systems without $D$-cycles, given its $E$-basis. Then the the set cover problem can be effectively solved as well.
Indeed, start from any instance $(Q,\mathcal{Q})$ of the set cover problem. If it is trivial, then it is solved effectively. If it is not trivial, then we can effectively write the $E$-basis of a bounded lattice constructed in Lemma \ref{bounded} from $(Q,\mathcal{Q})$. Since, according to assumption, this allows us to write effectively 
an optimum basis for this closure system, the solution to the given set cover problem can be recovered from one of its implications in polynomial time. This contradicts the NP-completeness of the set cover problem.
\end{proof}

Now we turn to second problem on our list.

\begin{lem}\label{bounded+}
Suppose $(Q,\mathcal{Q})$ is a non-trivial instance of the set cover problem. One can effectively find a closure system, whose closure lattice is bounded, such that a particular binary implication $w\rto B$ of its optimum basis translates into a solution of this set cover problem.
\end{lem}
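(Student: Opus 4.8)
The plan is to reuse the scheme of Lemma~\ref{bounded}, but to place the new join-irreducible element $w$ at the \emph{top} of the lattice, so that the singleton $\{w\}$ itself becomes the critical set of interest and the set cover is encoded in the \emph{right} side $B$ of the binary implication $w\rto B$ of an optimum basis. In this guise, Theorem~\ref{rs-bin} (together with the fact that every optimum basis is regular and right-side optimum) plays exactly the role that Theorem~\ref{W}(II) and the analysis of minimal generators played in Lemma~\ref{bounded}.

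Here is the construction. As in Lemma~\ref{bounded}, first pass to the enlarged instance $(Q,\mathcal{Q}^*)$, $\mathcal{Q}^*=\mathcal{Q}\cup\{\{q_i\}:i\leq n\}$, which has a minimum cover of the same size. Start from the Boolean lattice $L_0=\mathbf{2}^Q$; for each $Q_i\in\mathcal{Q}$ with $|Q_i|>1$ double the element $b_i=Q_i$, producing new join-irreducibles $z_1,\dots,z_k$ with $z_i>q$ precisely for $q\in Q_i$, and call the result $L_1$. By non-triviality no $Q_i$ is $Q$ or a coatom of $\mathbf{2}^Q$, so the top $T$ of $L_1$ still covers the $n\geq 2$ images of the coatoms $Q\setminus\{q_j\}$, and $\Ji(L_1)=\{q_1,\dots,q_n,z_1,\dots,z_k\}=:C'$. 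Now perform one more doubling, this time of the top element $T$ of $L_1$, obtaining $L=L_1[T]$. Its new top $w$ is join-irreducible with unique lower cover $T_0$; since $T_0$ covers the $n\geq 2$ images of the coatoms it is join-reducible, hence $T_0\notin\Ji(L)$ and $\Ji(L)=C'\cup\{w\}$ with $\phi(\{w\})\setminus\{w\}=C'$. Being obtained from a Boolean lattice by a sequence of interval doublings, $L$ is bounded (Day~\cite{D79}), hence without $D$-cycles; let $\<S,\phi\>$ be the standard closure system with $\op{Cl}(S,\phi)=L$ and $S=\Ji(L)$, which together with its canonical and $E$-bases can be written down in time polynomial in the size of $(Q,\mathcal{Q})$.

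It remains to extract the cover. The singleton $\{w\}$ is a critical set (it is not closed, since $\phi(\{w\})=\{w\}\cup C'$, and any singleton is a minimal quasi-closed set), so $w\rto B$ lies in every aggregated basis, and $w\rto C'$ is a binary implication of $\Sigma_C$. For $Y\subseteq C'$ one verifies, exactly as in Lemma~\ref{bounded}, that $\phi(Y)=C'$ iff the join of $Y$ in $L$ equals $T_0$, iff the union of the members of $\mathcal{Q}^*$ indexed by $Y$ (under the bijection $q_i\mapsto\{q_i\}$, $z_i\mapsto Q_i$ between $C'$ and $\mathcal{Q}^*$) is all of $Q$; hence $\min\{|Y|:\phi(Y)=C'\}$ equals the minimum cover size of $(Q,\mathcal{Q}^*)$, which equals that of $(Q,\mathcal{Q})$. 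Since every optimum basis is regular (the corollary following Lemma~\ref{reg}) and right-side optimum (Corollary~\ref{lr}), Theorem~\ref{rs-bin} applies to its binary implication $w\rto B$ and yields $\phi(B)=\phi(\{w\})\setminus\{w\}=C'$ and $|B|=b_{\{w\}}=\min\{|Y|:\phi(Y)=C'\}$. Therefore $B$ is a minimum generator of $C'$; reading its elements through the above bijection, and replacing any singleton $\{q_i\}\notin\mathcal{Q}$ by a member of $\mathcal{Q}$ containing $q_i$ (which does not increase the number of sets), produces a minimum cover of $(Q,\mathcal{Q})$. As with Lemma~\ref{bounded}, the only real difficulty is the lattice bookkeeping: confirming that the successive single-element doublings keep each $z_i$ join-irreducible with the intended comparabilities, and that after doubling the top $T$ one has $\phi(\{w\})\setminus\{w\}$ \emph{equal} to $C'$ rather than $C'\cup\{T_0\}$, which is precisely where $n\geq 2$, guaranteed by non-triviality, enters.
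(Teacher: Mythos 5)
Your proposal is correct and follows essentially the same route as the paper: build a bounded lattice by doubling the elements corresponding to the $Q_i$ in $\mathbf{2}^Q$ and placing a new join-irreducible $w$ at the top (the paper adjoins the new top first and then doubles the $Q_i$; you double first and then double the top of $L_1$ — the same lattice either way), then invoke Theorem~\ref{rs-bin} and the fact that $Q$ is the unique $\ll$-minimal representation of $\phi(\{w\})\setminus\{w\}$ to read a minimum cover off the conclusion $B$ of $w\rto B$. The only differences are bookkeeping: you verify explicitly that $T_0$ drops out of $\Ji(L)$ (using $n\geq 2$) and that generators of $C'$ correspond exactly to covers, both of which the paper leaves implicit.
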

\begin{proof}
As in the proof of Lemma \ref{bounded}, we assume that $Q=\{q_1,\dots, q_n\}$, $\mathcal{Q}=\{Q_i: i \in I\}$ is some set cover of $Q$, and $\mathcal{Q}$ has $k$ elements of cardinality $>1$. We also can replace the instance of set cover problem by $(Q,\mathcal{Q^*})$ with $\mathcal{Q^*}=\mathcal{Q}\cup \{\{q_i\}: i\leq n\}$.

We start from $L_0=\mathbf{2}^Q\cup\{w\}$, where $w > x$, for all $x \in \mathbf{2}^Q$. For each $Q_i\in \mathcal{Q}$, $|Q_i|>1$, double element $b_i=Q_i\in L_0$, $i\leq k$, adding new join-irreducible elements $z_i$.
The resulting bounded lattice $L$ is built on the set of join-irreducible elements $S=\{q_1, \dots q_n,z_1, \dots, z_k,w\}$. Let $\<S,\phi\>$ be a closure system with $L=\op{Cl}(S,\phi)$.

According to Theorem \ref{rs-bin}, every (right-side) optimum basis should have an implication $w_0\rto  B$, where $\phi(B)=S\setminus \{w_0\}=\phi(\{w_0\})\setminus \{w_0\}$ and $|B|$ is minimal among subsets of $S\setminus\{w_0\}$ with this property. Recall that $Q$ is a unique $\ll$-minimal representation of $S\setminus \{w_0\}$. Hence, $Q\ll B$. This means that for every $q \in Q$ there exists $b\in B$ such that $b\geq_\phi q$, or simply $b\geq q$ in $L$. Thus, $B$ can be interpreted as a set cover for $Q$, where each $b\in B$ is thought as a an element of covering family $\mathcal{Q^*}$. The rest of the argument is similar to the proof of Lemma \ref{bounded}.

\end{proof}

\begin{cor}\label{sRNP}
The problem of finding a basis $\Sigma$, for systems without $D$-cycles (and even in closure systems with bounded closure lattices), that reaches minimum in $s_R(\Sigma^b)$, is NP-complete. In particular, the problem of finding an optimum basis for such systems in NP-complete.
\end{cor}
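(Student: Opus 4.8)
The plan is to repeat the argument of Corollary~\ref{NP1}, this time with Lemma~\ref{bounded+} in place of Lemma~\ref{bounded}. First I would dispose of membership in NP, which is routine: a certificate for the decision problem ``is there a basis $\Sigma$ with $s_R(\Sigma^b)\le m$'' is a basis $\Sigma$ of the same closure system of size polynomial in the input, and one checks by forward chaining, in polynomial time, that $\Sigma$ and the given basis are equivalent and that $s_R(\Sigma^b)\le m$. So the real content is NP-hardness, for which I would give a polynomial reduction from the set cover problem.

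Given an instance $(Q,\mathcal{Q})$ of set cover: if it is trivial I solve it directly; otherwise I apply Lemma~\ref{bounded+} to obtain a standard closure system $\<S,\phi\>$ with bounded closure lattice. The point to be careful about — and, since Lemma~\ref{bounded+} does the structural work, essentially the only one — is that the lattice $L=\op{Cl}(S,\phi)$ has $2^{|Q|}$ elements, so the construction must be carried out entirely at the level of implications: I write down directly the polynomial-size canonical basis of $\<S,\phi\>$ (the binary implications $z_i\rto q$ for $q\in Q_i$, together with the non-binary implication $C\rto w$, where $C=\{q_1,\dots,q_n,z_1,\dots,z_k\}$), never enumerating closed sets. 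Then, assuming a polynomial algorithm that, on input a basis of a closure system without $D$-cycles, returns a basis $\Sigma$ minimizing $s_R(\Sigma^b)$, I run it on $\<S,\phi\>$. Since $\{w\}$ is a non-closed singleton, $\Sigma^b$ contains an implication $w\rto B$; by Lemma~\ref{bin opt} I may take $\Sigma$ regular, so that $\phi(B)=S\setminus\{w\}$, and by Theorem~\ref{rs-bin} $|B|$ is minimum with this property. As shown in the proof of Lemma~\ref{bounded+}, $Q$ is the unique $\ll$-minimal representation of $S\setminus\{w\}$, hence $Q\ll B$; reading each $b\in B$ as the member $\{q_i\}$ (if $b=q_i$) or $Q_i=\phi(b)\cap Q$ (if $b=z_i$) of $\mathcal{Q}^{*}$ above it, $B$ becomes a covering subfamily of $\mathcal{Q}^{*}$, and its minimality makes it a smallest one; translating back yields a smallest cover inside $\mathcal{Q}$, solving $(Q,\mathcal{Q})$ in polynomial time and contradicting the NP-completeness of set cover.

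For the last assertion I would observe that any optimum basis of $\<S,\phi\>$ is, by Theorem~\ref{rs-bin} and Corollary~\ref{Bopt}, in particular a basis attaining the minimum value of $s_R(\Sigma^b)$ (indeed $s_R(\Sigma^b)=\sum_{|C|=1}b_C$ for every optimum basis), so a polynomial algorithm producing optimum bases would solve the problem just shown NP-hard; alternatively, this is already a consequence of Corollary~\ref{NP1}, since an optimum basis also attains the minimum of $s_L(\Sigma^{nb})$. The main obstacle in all of this is not the reduction itself but keeping it polynomial — performing the construction on bases rather than on the exponentially large closure lattice, and verifying that the back-translation from $w\rto B$ to a subfamily of $\mathcal{Q}^{*}$ runs in polynomial time.
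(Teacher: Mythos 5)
Your proposal is correct and follows essentially the same route as the paper: reduce a non-trivial instance of set cover via Lemma~\ref{bounded+}, run the hypothetical algorithm, read the solution off the binary implication $w\rto B$ for the top element, and derive the final claim about optimum bases from Corollaries~\ref{lr} and~\ref{Bopt}. Your added care about NP membership and about encoding the system by a polynomial-size set of implications rather than the exponentially large lattice is well placed (the paper makes the same point explicitly only in the proof of Corollary~\ref{NP1}). One slip: the explicit basis in your parenthesis is the one for the construction of Lemma~\ref{bounded}, not of Lemma~\ref{bounded+} --- in the latter, $w$ is adjoined as the \emph{top} element, so $w\rto S\setminus\{w\}$ is binary and there is no non-binary implication $C\rto w$; the non-binary implications are instead those forcing the doubled elements, of the form $Q_i\cup\{q\}\rto z_i$ with $q\notin Q_i$. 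Since the remainder of your argument (``$\{w\}$ is a non-closed singleton, so $\Sigma^b$ contains $w\rto B$ with $\phi(B)=S\setminus\{w\}$'') correctly uses the Lemma~\ref{bounded+} setup, this is a harmless misstatement rather than a gap, but the basis you actually feed to the algorithm must be the correct one for that construction.
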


\begin{proof}
Indeed, starting from an instance of the cover set problem, build a finite bounded lattice $L$ from Lemma \ref{bounded+}. If the (right side) optimum basis for $L$ can be effectively found, then taking a binary implication corresponding to the top element of $L$, we would get a solution to the set cover problem. The last sentence follows
from Corollaries \ref{lr} and \ref{Bopt}.
\end{proof}


\emph{Acknowledgments.} The first draft of the paper was written during the first author's visit to University of Hawai'i, supported by AWM-NSF Travel Mentor Grant. The warm and encouraging atmosphere of the Department of Mathematics of UofH is highly appreciated. The work on the paper was also inspired by the communication with 
V.~Duquenne and E.~Boros.


\begin{thebibliography}{99}

\bibitem{A12} K.~Adaricheva, \emph{Optimum basis of finite convex geometry}, available at http://arxiv.org/abs/1205.3236

\bibitem{ANR11} K.~Adaricheva, J.B.~Nation and R.~Rand,  \emph{Ordered direct implicational basis of a finite closure system}, to appear in Disc. Appl. Math.; available at http://arxiv.org/abs/1110.5805

\bibitem{AN12} K.~ Adaricheva and J.B.~Nation, \emph{On closure systems with unique critical sets}, a manuscript.

\bibitem{AAS86} G.~Ausiello, A.~D'Atri and D.~Sacc\'a, \emph{Minimal representation of directed hypergraphs}, Journal on computing \textbf{15} (1986), 418--431.

\bibitem{BM} K.~Bertet and B.~Monjardet, \emph{The multiple facets of the canonical direct unit implicational basis},
Theor. Comp. Sci. \textbf{411}(2010), 2155--2166.

\bibitem{B11} E.~Boros, Horn Functions, in Boolean Functions: Theory, Algorithms, and Applications (Y.~Crama and P.L.~Hammer eds.), Cambridge University Press, New York, 2011.

\bibitem{B10} E.~Boros, O.~\v{C}epek, A.~Kogan and P.~Kuc\v{e}ra, \emph{ Exclusive and essential sets of implicates of Boolean functions}, Disc. Appl. Math. \textbf{158} (2010), 81--96.

\bibitem{BCKK} E.~Boros, O.~\v{C}epek, A.~Kogan and P.~Kuc\v{e}ra, \emph{ A subclass of Horn CNFs optimally compressible in polynomial time}, Annals of Math. and Artif. Intelligence \textbf{57} (2012), 249--291.
 
\bibitem{CM03} N.~Caspard and B.~Monjardet, \emph{The lattices of closure systems, closure operators, and implicational systems on a finite set: a survey}, Disc. Appl. Math. \textbf{127} (2003), 241--269.

\bibitem{D79} A.~Day, \emph{Characterizations of finite lattices that are bounded homomorphic images or sublattices of free lattices}, Canad. J. Math. \textbf{31} (1979), 69--78. 

\bibitem{D92} A.~Day, \emph{The lattice theory of functional dependencies and normal decompositions}, Int.J.Alg. Comp. \textbf{2}(1992), 409--431.

\bibitem{D88}
V.~Duquenne,\emph{On the core of finite lattices},
Discr. Math. \textbf{88} (1991), 133--147.

\bibitem{DG} J.~L.~Guigues and V.~Duquenne, \emph{Familles minimales d'implications informatives r\'esultant d'une tables de donn\'ees binares}, Math. Sci. Hum. \textbf{95} (1986), 5--18.

\bibitem{FJN}
R.~Freese, J.~Je\v{z}ek and J.~B.~Nation,
\emph{Free Lattices},
Mathematical Surveys and Monographs \textbf{42}, Amer.\ Math.\ Soc.,
Providence, 1995.

\bibitem{HW96} C.~Herrmann and M.~Wild, \emph{A polynomial algorithm for testing congruence modularity}, Int. J.Alg.Comp. \textbf{6} (1996), 379--387.

\bibitem{HK} P.L.~Hammer and A.~Kogan, \emph{Quasi-acyclic propositional Horn knowledge bases: optimal compression}, IEEE Transactions on knowledge and data engineering \textbf{7} (1995), 751--762.

\bibitem{JK62}
B.~J\'onsson and J. E.~ Kiefer, \emph{Finite sublattices of a
  free lattice}, Canad.~J.~Math. \textbf{14} (1962), 487--497.  

\bibitem{K72} R.~Karp, \emph{Reducibility among combinatorial problems}, in Complexity of computer computations (R.E. Miller and J.W.Thatcher, eds.), Plenum, NY, 1972, 85--103.

\bibitem{Mai} D.~Maier, \emph{Minimum covers in the relational database model}, JACM \textbf{27} (1980), 664--674.  

\bibitem{W94} M.~Wild, \emph{A theory of finite closure spaces based on implications}, Adv. Math. \textbf{108}(1994), 118--139.

\bibitem{W00} M.Wild, \emph{Optimal implicational bases for finite modular lattices}, Quaestiones Mathematicae \textbf{23} (2000), 153--161.

\end{thebibliography}
\end{document}